\newtheorem{thm}{Theorem}[section]
\newtheorem{THM}{Theorem}
\newtheorem{cor}[thm]{Corollary}
\newtheorem{prop}[thm]{Proposition}
\newtheorem{lemma}[thm]{Lemma}
\theoremstyle{definition}
\newtheorem{remark}[thm]{Remark}
\DeclareMathOperator{\Hom}{Hom}
\DeclareMathOperator{\Pic}{Pic}
\DeclareMathOperator{\Aut}{Aut}
\DeclareMathOperator{\sing}{sing}
\DeclareMathOperator{\Mor}{Mor}
\DeclareMathOperator{\codim}{codim}
\DeclareMathOperator{\ord}{ord}
\DeclareMathOperator{\alb}{alb}
\def\C{\mathbb C}
\def\P{\mathbb P}
\def\p{\mathfrak p}
\def\Z{\mathbb Z}
\def\F{\mathcal F}
\def\D{\mathcal D}
\def\G{\mathcal G}
\def\H{\mathcal H}
\def\KH{{K_{\H}}}
\def\KF{{K_{\F}}}
\def\KG{{K_{\G}}}
\def\KX{{K_X}}
\def\KY{{K_Y}}
\def\KZ{{K_Z}}
\begin{document}

\title[Foliations with trivial canonical class]
{Singular foliations  with trivial canonical class}
\author[F. Loray, J.V. Pereira and F. Touzet ]
{Frank LORAY$^1$,  Jorge Vit\'{o}rio PEREIRA$^{2}$ and Fr\'ed\'eric TOUZET$^1$}
\address{\newline $1$  Univ Rennes, CNRS, IRMAR - UMR 6625, F-35000 Rennes, France\hfill\break
$2$ IMPA, Estrada Dona Castorina, 110, Horto, Rio de Janeiro,
Brasil} \email{$^1$ frank.loray@univ-rennes1.fr, frederic.touzet@univ-rennes1.fr}
\email{$^2$ jvp@impa.br}
\subjclass{} \keywords{Foliation, Transverse Structure, Birational Geometry}

\begin{abstract}
This paper  describes the structure of singular codimension one foliations with  numerically trivial canonical bundle on complex projective manifolds.
\end{abstract}

\maketitle

\setcounter{tocdepth}{1}
\sloppy
\tableofcontents

\section{Introduction}
Let $\mathcal F$ be a singular holomorphic foliation on a compact complex  manifold $X$, and let $\KF$ be its canonical bundle.
In  analogy with the case of complex manifolds, the canonical  bundle of $\F$  is the line bundle on $X$
which, away from the singular set of $\F$, coincides with the bundle of differential forms of maximal degree along the leaves of $\F$.

As in the case of manifolds, one expects that $\KF$ governs much of the geometry of $\F$. When $X$ is a projective surface, this vague
expectation has already been turned into precise results. There is now  a birational classification of foliations on projective surfaces,
very much in the spirit of Enriques-Kodaira classification of projective surfaces, in terms of numerical properties of $\KF$, see \cite{McQ,Brunella}.

In this paper, we investigate the structure of singular holomorphic codimension one foliations on projective manifolds with $\KF$ numerically equivalent to zero.
We were dragged into the subject by a desire to better understand  previous results, most notably \cite{CerveauLinsNetoAnnals} and  \cite{Touzet},
which  we  recall below. Further motivation  comes from the study of holomorphic Poisson manifolds, see  \cite{Polishchuk, LimaPereira}.

\subsection{Previous results}\label{S:oldresults}
Cerveau and Lins Neto proved that the space of foliations
on $\P^3$ with $\KF =0$ has six irreducible components \cite{CerveauLinsNetoAnnals}, and gave a rather precise description
of them.
The paper \cite{Polishchuk} by Polishchuk contains  a classification
of Poisson structures on $\P^3$ under restrictive hypothesis on their singular set. But (non-zero) Poisson structures
without singular divisors on $3$-folds are nothing more than foliations with trivial canonical bundle, thus Polishchuk's result is a particular case of
Cerveau-Lins Neto classification.  Poisson structures on projective $3$-folds with isolated singularities (i.e. codimension one foliations with trivial canonical bundle)
are classified by Druel in \cite{Druel}.

Smooth codimension one foliations  with numerically trivial canonical bundle on compact K\"{a}hler manifold  $X$ are classified in \cite{Touzet}.
If $\F$ is one such foliation, then it fits into at  least one of the following descriptions.
\begin{enumerate}
\item The foliation $\F$ is an isotrivial fibration by hypersurfaces with zero first Chern class.
\item After a finite \'{e}tale covering,   $X$  is  product of a compact K\"ahler manifold  $Y$ with $c_1(Y)=0$ and a complex torus $T$ and $\F$ is the pull-back under the  natural
projection to $T$ of a linear codimension one foliation on $T$.
\item The manifold $X$ is a fibration by rational curves over a compact K\"{a}hler manifold $Y$ with $c_1(Y) =0$, and $\F$ is a foliation everywhere transverse to the
fibers of the fibration.
\end{enumerate}

\subsection{Main results}

One of  the first examples of foliations with $\KF=0$ that come to mind are those with
trivial tangent bundle. Foliations with trivial tangent bundle are
induced by  (analytic) actions of complex Lie groups which are locally free outside an
analytic subset of codimension at least two. If the action is not locally free, then a well-known result by Rosenlicht implies that the manifold  must be uniruled. We are able to generalize this
well-known fact, confirming a recent conjecture of Peternell \cite[Conjecture 4.23]{Peternell}.

\begin{THM}\label{TI:D}
Let  $X$ be a projective manifold and $L$ be a pseudo-effective line bundle on $X$.
If there exists  $v \in H^0(X,\bigwedge^p TX\otimes L^*)$ not identically zero but vanishing at some point, then $X$ is uniruled. In particular, if there exists a foliation $\F$ on $X$ with $c_1(T\F)$ pseudo-effective and $\sing(\F) \neq \emptyset$,
then $X$ is uniruled.
\end{THM}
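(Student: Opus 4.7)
The strategy is to argue by contradiction: assume $X$ is not uniruled. By the Boucksom--Demailly--Paun--Peternell characterization of uniruledness, $K_X$ is pseudo-effective, and hence by Miyaoka's generic semi-positivity theorem (in the refined form of Campana--P\u{a}un) every torsion-free quotient of any tensor power of $\Omega^1_X$, in particular of $\Omega^p_X$, has first Chern class whose intersection with every movable class is non-negative. By BDPP duality, this means that every such quotient has pseudo-effective first Chern class.

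By adjunction, the section $v$ corresponds to a non-zero morphism $\psi \colon L \to \bigwedge^p TX$, or dually to $\psi^\vee \colon \Omega^p_X \to L^*$. Let $D$ denote the divisorial part of the zero locus of $v$; then the image of $\psi^\vee$ sits inside $L^*(-D)$ with cokernel of codimension at least two, so its first Chern class is $-c_1(L) - [D]$. Generic semi-positivity applied to this image yields $(-c_1(L) - [D]) \cdot \alpha \geq 0$ for every movable class $\alpha$. Combined with the inequalities $c_1(L) \cdot \alpha \geq 0$ (since $L$ is pseudo-effective) and $[D] \cdot \alpha \geq 0$ (since $D$ is effective), we are forced to have $c_1(L) \cdot \alpha = [D] \cdot \alpha = 0$ for all movable $\alpha$. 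Consequently $c_1(L) \equiv 0$ and $D = 0$, so the entire zero locus of $v$ has codimension at least two, and $\psi$ realizes $L$ as a saturated rank-one sub-sheaf of $\bigwedge^p TX$ with numerically trivial first Chern class but non-empty degeneracy locus.

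The last and most delicate step is to extract a rational curve in $X$ from this configuration. In the foliation case corresponding to the ``In particular'' clause (namely, when $v$ is decomposable and defines a foliation $\F$ with $T\F = L$), the degeneracy locus is precisely $\sing(\F)$, and we have reduced to a foliation with numerically trivial canonical class and non-empty singular set. Here one invokes structural theorems for foliations with trivial canonical class on non-uniruled projective manifolds --- in the spirit of Druel, or of Bogomolov--McQuillan-style algebraic integrability --- which imply that such a foliation must be regular, contradicting $\sing(\F) \neq \emptyset$. For general non-decomposable $v$, the analogous contradiction is obtained by passing to the Plücker/Grassmann-bundle picture or by a direct local analysis at the degeneracy locus. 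The principal obstacle is precisely this final transition: one has to convert the purely numerical information of a codimension $\geq 2$ singular set of a saturated rank-one sub-sheaf of $\bigwedge^p TX$ with trivial $c_1$ into the \emph{geometric} conclusion that $X$ carries a covering family of rational curves.
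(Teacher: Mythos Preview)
Your reduction step is essentially the same as the paper's: assuming $X$ not uniruled, BDPP gives $KX$ pseudo-effective, and then Miyaoka's generic semi-positivity (in the paper, phrased via Mehta--Ramanathan restriction to complete intersection curves) forces $c_1(L)=0$ and kills any divisorial component of the zero locus of $v$. So far, so good.

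The genuine gap is the step you yourself flag as ``the principal obstacle'': from a non-zero $v\in H^0(X,\bigwedge^p TX\otimes L^*)$ with $c_1(L)=0$ and $\mathrm{codim}\,Z(v)\ge 2$, you still need to show $Z(v)=\emptyset$. Your proposed way out --- ``invoke structural theorems for foliations with trivial canonical class on non-uniruled projective manifolds, in the spirit of Druel or Bogomolov--McQuillan'' --- is circular here: that is precisely the content of Theorem~\ref{TI:DD}, which the paper proves in tandem with Theorem~\ref{TI:D}. And for non-decomposable $v$ you offer no mechanism at all.

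What the paper actually does at this point is purely Hodge-theoretic and has nothing to do with foliation structure theorems. Once $c_1(L)=0$, the line bundle $L$ admits a flat hermitian metric. Contracting $v$ with a local trivialisation of $KX$ gives a collection $\{\omega_i\}$ representing a section of $\Omega^q_X\otimes KX^*\otimes L$. A singular psh metric on $KX\otimes L^*$ (from pseudo-effectivity) then makes
\[
\eta=\sqrt{-1}\,e^{\varphi_i}\,\omega_i\wedge\overline{\omega_i}
\]
a closed $(q,q)$-current with non-trivial cohomology class. Pairing with a dual class and using Hodge symmetry for $\overline\partial$-cohomology with flat coefficients produces a holomorphic $(n-q)$-form $\overline\gamma$ with values in $L^*$ such that $\overline\gamma\wedge\omega_i$ is a nowhere-vanishing section of $\mathcal O_X$. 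Hence $\omega_i$, and therefore $v$, has no zeros --- contradiction. This is Theorem~\ref{T:psef} in the paper, and it is the missing ingredient in your argument.
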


Theorem \ref{TI:D}  reduces the task of classifying codimension one foliations
with $c_1(\KF)=0$ on arbitrary projective manifolds to  uniruled manifolds, as smooth foliations with numerically trivial canonical bundle  have already been classified in \cite{Touzet}.

Our main result provides a description of an arbitrary codimension one foliation
with trivial canonical class on an arbitrary projective manifold.
It should be compared with McQuillan's classification of foliations  of Kodaira
dimension zero on surfaces, \cite[IV.3]{McQ}.

\begin{THM}\label{TI:X}
Let $\F$ be a codimension one foliation with numerically trivial canonical bundle on a  projective manifold $X$
of dimension at least two. If the singularities of  $\F$  are not canonical, then $\F$ is uniruled.
Otherwise, if the singularities of $\F$ are canonical, then, perhaps after passing to an \'etale covering,
$X$ is a product of
a projective manifold $Y$ with trivial canonical bundle and a projective manifold $Z$; and   the foliation
$\F$ is the pull-back of a foliation $\G$ on $Z$ with trivial tangent sheaf.  Furthermore, if the general leaf of $\F$ is not algebraic, then  $\dim Z\ge 2$  and
$Z$ is a projective equivariant compactification of a complex  abelian Lie group and $\G$ is induced by the action of a codimension
one subgroup.
\end{THM}

It is not excluded from the statement above the case where $Y$ is zero-dimensional.
Notice also that, if $\dim Z=1$, then
$\G$ is a foliation by points and, consequently, $\F$ is a smooth foliation by algebraic leaves
with trivial canonical class.

\subsection{Outline}

Below, we describe the general structure of the proof of Theorem \ref{TI:X} and at the same time an outline of the paper.

In Section \ref{S:prelim}, we settle the basic terminology and recall some uniruledness criteria for  foliations. In Section \ref{S:canonical},
we discuss canonical singularities of codimension one foliations, and we show how the presence of non-canonical singularities allow
us to apply the uniruledness criteria previously discussed. This section also contains information on the polar divisor of closed rational $1$-forms defining foliations with canonical singularities. Section \ref{S:conormal} studies the cohomology of the conormal bundle
of foliations with trivial canonical class. The results there contained form the backbone of our strategy to prove Theorem \ref{TI:X}.
In particular, there one can find a proof that either $H^1(X,N^*\F)=0$, or $\F$ is uniruled, or $K_X$ is pseudo-effective. This leads us
to Section \ref{S:uniruled} where we prove Theorem \ref{TI:D} and consequently reduce our study to the category of uniruled manifolds.
Section \ref{S:free morphisms} studies deformation of free rational curves along codimension one foliations with numerically trivial canonical
class; there we show how these deformations lead either to the uniruledness of the foliation, or to the existence of a  transversely projective structure for it.
Section \ref{S:modp} studies the reduction of foliations with numerically trivial canonical bundle to fields of positive characteristic.
The outcome is  that foliations not defined by closed rational $1$-forms  have very well behaved singularities (outside a codimension
three subset they admit local holomorphic first integral). In Section \ref{S:structure}  it
is shown that  the existence of a transversely projective structure together with the constraints on the singularities obtained through reduction to positive characteristic implies the non vanishing of $H^1(X, N^* \mathcal F)$.  Finally, in Section \ref{S:structure2}, we study codimension one foliations with trivial canonical bundle defined by closed rational $1$-forms and conclude the proof of Theorem \ref{TI:X}.

\subsection{Acknowledgments} We are very grateful to St\'ephane Druel for useful discussions and for  bringing  \cite[Theorem VI.1.3]{kollar}  to our knowledge.
This paper also owns a lot to Michael McQuillan who  caught a number of mistakes in previous versions,
called our attention to the relevance of foliated canonical singularities to our study, and made a number
of other thoughtful suggestions.


\section{Preliminaries}\label{S:prelim}

\subsection{Foliations}
A foliation $\F$ on a complex manifold $X$ is determined by a coherent subsheaf $T\F$
of the tangent sheaf $TX$ of $X$ such that
\begin{enumerate}
	\item $T\F$ is  closed under the Lie bracket (involutive), and that
	\item the inclusion $T\F \to TX$ has torsion free cokernel.
\end{enumerate}
The locus of points where $TX / T\F$ is not locally free is
called the singular locus of $\F$, denoted here by $\sing(\F)$.

Condition (1) allows us to apply Frobenius Theorem  to ensure that for  every point $x$  in the complement of $\sing(\F)$, the
germ of  $T \F$ at $x$ can be identified with relative tangent bundle of a germ of smooth fibration $f : (X,x) \to (\mathbb C^q,0)$.
The integer $q = q(\F)$  is the codimension of $\F$. Condition (2) is of different nature and is imposed to avoid the existence of {\it removable} singularities. In particular, it implies
that the codimension of $\sing(\F)$ is at least two.

The dual of $T\F$ is the cotangent sheaf of $\F$ and will be denoted by $T^*\F$. The determinant of $T^*\F$, i.e. $(\wedge^{p} T^* \F)^{**}$ where $\dim(X)=n=p+q$,
is  the canonical bundle of $\F$ and will be denoted by $\KF$.

There is a dual point of view where $\F$ is determined by a subsheaf $N^* \F$ of the cotangent sheaf $\Omega^1_X = T^* X$ of $X$. The involutiveness
asked for in condition (1) above is replace by integrability: if $d$ stands for the exterior derivative, then
$dN^*\F \subset N^* \F \wedge \Omega^1_X$ at the level of local sections.
Condition (2) is unchanged: $\Omega^1_X / N^* \F$ is torsion free.

The normal bundle of $\F$ is defined as the dual of $N^* \F$. Over the smooth locus $X - \sing(\F)$ we have the following exact sequence
\[
0 \to T\F \to TX \to N \F \to 0 \, ,
\]
but  this is no longer exact over the singular locus. Anyway, as the singular set has codimension at least two
we obtain the adjunction formula
\[
\KX = \KF \otimes \det N^* \F
\]
valid in the Picard group of $X$.

The definitions above apply verbatim to foliations on smooth algebraic varieties defined
over an arbitrary field. But be aware that the geometric interpretation given by Frobenius Theorem will no longer hold, especially over fields
of positive characteristic.

\subsection{Rationally connected and uniruled foliations}
The   result stated below
is a particular case of a more general result by
Bogomolov and McQuillan proved in \cite{BogMac}, see also \cite{KST}. It generalizes
a Theorem of  Miyaoka, cf. \cite[Theorem 8.5]{Miyaoka}, \cite[Chapter 9]{kol}.

\begin{thm}\label{T:MBM}
	Let $\F$ be  foliation on a  complex projective manifold $X$. If there exists a curve $C \subset X$ disjoint from the
	singular set of $\F$ for which  $T\F _{|C}$ is ample,    then
	the leaves of $\F$ intersecting $C$ are algebraic, and the closure of a leaf of $\F$ through a general point of $C$ is a rationally connected variety.
\end{thm}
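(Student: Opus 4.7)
The plan is to produce rational curves tangent to $\F$ passing through points of $C$ via a bend-and-break argument, check that these curves sweep out leaves of $\F$, and finally deduce rational connectedness. To set things up, I would first replace $C$ by its normalization $\tilde C \to C \subset X\setminus\sing(\F)$; the pullback of $T\F$ to $\tilde C$ remains ample. Let $M \subset \Mor(\tilde C, X)$ be the locus of morphisms whose derivative factors through $T\F$. A standard tangent-obstruction computation shows that the Zariski tangent space to $M$ at $[\tilde C \hookrightarrow X]$ is $H^0(\tilde C, T\F|_{\tilde C})$, which is large by ampleness; fixing two points $p,q \in \tilde C$ and requiring them to map to prescribed images, one still gets positive-dimensional families after sufficiently ramified base change.

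Second, I would apply Mori's bend-and-break to this family: a positive-dimensional family of morphisms with two points pinned forces degeneration into a reducible curve with a rational component. Crucially, the involutivity of $T\F$ implies that the condition "derivative factors through $T\F$" is closed under degeneration, so the resulting rational curves $g : \P^1 \to X$ have image tangent to $\F$ and meet $C$. For the production step one genuinely needs positive characteristic (Mori's Frobenius trick), so I would spread $X$, $\F$, $C$ out over a finitely generated $\Z$-algebra, argue in characteristic $p \gg 0$, and lift the resulting rational curves back to characteristic zero using the standard unobstructedness of free rational curves under $H^1(\P^1, g^*TX)=0$.

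Third, once free rational curves $g:\P^1\to X$ tangent to $\F$ are available through a general point $x \in C$, Frobenius's theorem forces $g(\P^1)$ to lie in a single leaf of $\F$ near any regular point. Deforming $g$ while keeping the image tangent to $\F$ sweeps out an algebraic subvariety $Z_x \subset X$ whose tangent space at general points is contained in $T\F$; integrability identifies $Z_x$ with the closure of the leaf through $x$, proving algebraicity of all leaves meeting $C$. Moreover the ampleness gives enough free rational curves that any two general points of $Z_x$ can be joined by a chain of them, so the Koll\'ar-Miyaoka-Mori criterion yields rational connectedness of $Z_x$.

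The hard step is the bend-and-break itself: directly in characteristic zero the ampleness of $T\F|_C$ does not immediately give a degeneration because $C$ need not be rational, and fixing two points only kills a bounded number of parameters. The Bogomolov-McQuillan technique circumvents this by passing to reduction modulo $p$, using the Frobenius to amplify deformations, and then lifting the rational curves back; controlling the tangency condition under degeneration and under the $\Z$-spreading is the delicate point.
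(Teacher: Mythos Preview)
The paper does not prove Theorem~\ref{T:MBM}; it is stated as a quotation of Bogomolov--McQuillan \cite{BogMac} (generalizing Miyaoka), so there is no in-paper proof to compare against. Your outline is in fact a reasonable sketch of the cited argument: reduction modulo $p$, Frobenius amplification, bend-and-break producing rational curves tangent to $\F$, and then sweeping out the leaf.

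One comment on the technical set-up: you propose to work directly in the sublocus $M\subset\Mor(\tilde C,X)$ of morphisms whose differential lands in $T\F$, asserting that its tangent space at $[\tilde C\hookrightarrow X]$ is $H^0(\tilde C,T\F|_{\tilde C})$. This is morally right but the deformation/obstruction theory for that tangency condition is not the off-the-shelf one for $\Mor(\tilde C,X)$. The device Bogomolov--McQuillan use, and which the present paper recalls in \S6.7 under the name \emph{graphic neighborhood}, is precisely designed to make this step clean: one embeds the graph $\Gamma_f\subset \P^1\times X$ into a local analytic variety $Y$ obtained by saturating $\Gamma_f$ by the leaves of the product foliation, and then $\Gamma_f^*N_Y\Gamma_f\cong f^*T\F$, so ampleness of $T\F|_C$ becomes ampleness of an honest normal bundle and standard deformation theory applies. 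With that substitution your sketch matches the Bogomolov--McQuillan proof.
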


We recall that a variety $Y$ is  rationally connected  if, through any two points $x,y \in Y$, there exists a rational curve $C$ in $Y$ containing $x$ and $y$. Foliations
with all leaves algebraic and with rationally connected general leaf will be called rationally connected foliations.
From Theorem \ref{T:MBM}, one can easily deduce the following result closer to Miyaoka's original statement.

\begin{cor}\label{C:MBM}
Let $\F$ be a  foliation on a $n$-dimensional projective manifold $X$. If $T\F$ is semi-stable with respect to a polarization $H$, and  $\KF \cdot H^{n-1} < 0$, then
$\F$ is a rationally connected foliation.
\end{cor}
\begin{proof}
    If $m \gg 0$ and $C$ is a very general curve defined as a complete intersection of  elements of $|m H|$, then  $T\F_{|C}$ is
    a semi-stable vector bundle of positive degree according to \cite[Theorem 6.1]{MR}. Therefore, every quotient bundle of $T\F_{|C}$ has positive degree,
    and we can apply  \cite[Theorem 2.4]{Hart} to see that $T\F_{|C}$ is ample. We apply Theorem \ref{T:MBM} to conclude.
\end{proof}

Quite recently, Campana and Pa\u{u}n obtained an alternative  version of the above Corollary, see \cite{CPaun}.

\begin{thm}\label{T:MCP}
	Let $\F$ be a foliation on a projective manifold $X$. If $\KF$ is not pseudo-effective, then
	$\F$ is a uniruled foliation.
\end{thm}

While the conclusion is   weaker -- a foliation is uniruled if through a general point of the ambient space
passes a rational curve everywhere tangent to the foliation --,  the hypothesis is not only weaker, but also
considerably easier to check. This result already appeared implicitly in the proof of   Theorem 1.4 of \cite{CPT} but with a gap.

\subsection{Tangent subvarieties and pull-backs}

Let $\F$ be a singular foliation on a
projective manifold  $X$ of dimension $n$.
We will say that $\F$ is the pull-back of a foliation
$\mathcal G$ defined on a lower dimensional variety $Y$, say of dimension $k<n$,
if there exists a dominant rational map $\pi : X \dashrightarrow Y$ such that
$\F=\pi^*\mathcal G$. In this case, the leaves of $\F$
are covered by algebraic subvarieties of dimension $n-k$, the fibers of $\pi$.

Actually, the converse holds true. Suppose that, through a general point of $X$, there
exists an algebraic subvariety  tangent to $\F$ having codimension $k<n$. Since tangency
to $\F$ imposes a closed condition on the Hilbert scheme and $\mathbb C$ is uncountable, it follows that  the leaves of $\F$
are covered by $q$-dimensional algebraic subvarieties, $q\ge n-k$. More precisely, there exists an irreducible
algebraic variety $Y$ and an irreducible subvariety $Z\subset X\times Y$
such that the natural projections
\[
\xymatrix{
	Z \ar[r]^{\pi_2} \ar[d]^{\pi_1}  &Y  \\
	X  &}
\]
are both dominants; the   general fiber of $\pi_2$ has dimension $q$;
and the general fiber of $\pi_2$ projects to $X$ as a subvariety tangent to $\F$.
By   Stein factorization theorem, we can
moreover assume that   $\pi_2$ has irreducible general
fiber.

The following result shows how the existence of algebraic subvarieties through a general point and contained along the leaf through that point allows a factorization of the foliation. A particular version of
it can be found in the proof of \cite[Theorem 9.0.3]{kol}.

\begin{lemma}
    Let $\F$ be a  foliation on a projective
	manifold $X$ of dimension $n$. Assume that $\F$ is covered by
	a family of $(n-k)$-dimensional algebraic subvarieties as above.
	Then $\F$
	is the pull-back of a foliation defined on a variety $Y$ having dimension $\le k$.
\end{lemma}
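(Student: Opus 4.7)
The plan is to build a rational map $\pi: X \dashrightarrow Y$ with $\dim Y \le k$ whose general fiber is contained in a leaf of $\F$, and then to descend $\F$ along $\pi$ to a foliation $\G$ on $Y$. Given such a $\pi$, pick local coordinates $(u,v)$ at a general smooth point so that $\pi(u,v) = u$; the vectors $\partial/\partial v_j$ then belong to $T\F$ because the fibers of $\pi$ are tangent to $\F$. Involutivity of $T\F$ forces the image $d\pi(T\F) \subset \pi^* TY$ to be invariant along the fibers of $\pi$, so it descends to a subsheaf $T\G \subset TY$ whose pull-back by $\pi$ recovers $T\F$. Involutivity of $T\G$ and torsion-freeness of $TY/T\G$ are inherited from the corresponding properties of $\F$.

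The heart of the argument is therefore the construction of $\pi$. The first task is to reduce to the case in which $\pi_1: Z \to X$ is birational. If it is not, then through a general $x \in X$ several (or even infinitely many) distinct members of the family pass; all being tangent to $\F$ at $x$, they all lie in the local leaf of $\F$ through $x$. The natural move is to replace the given family by the one obtained by saturating each member under the algebraic equivalence relation ``lying in a chain of members of the family''; each saturation is a union of subvarieties tangent to $\F$ and is therefore itself tangent to $\F$, and its general dimension is strictly greater than $n-k$. In our setting this amounts to the classical construction of the geometric quotient of $X$ by an algebraic equivalence relation (cf.\ Rosenlicht's theorem, or the argument underlying \cite[Theorem 9.0.3]{kol}). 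Since the generic member dimension is bounded above by $\dim X$, iterating the procedure a finite number of times yields a covering family for which $\pi_1$ is birational. Then $\pi = \pi_2 \circ \pi_1^{-1}: X \dashrightarrow Y$ is the sought-after rational map, and because the general member of the enlarged family has dimension $n-k'$ with $k' \le k$, we have $\dim Y = k' \le k$.

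The main obstacle is precisely this reduction step. One must verify that the enlargement procedure produces, at each stage, an algebraic family parametrized by a (quasi-)projective variety, so that Stein factorization and the above setup remain applicable, and also that the process terminates. The first point requires care and can be handled using Chow varieties or components of the Hilbert scheme to parametrize the saturations; the second follows from the strict increase in generic fiber dimension and the bound $\dim X$. Once these technical issues are settled, the descent of $\F$ to $\G$ described in the first paragraph is essentially formal.
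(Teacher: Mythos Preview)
Your proposal is correct and follows essentially the same idea as the paper: once $\pi_1$ is birational the map $\pi=\pi_2\circ\pi_1^{-1}$ does the job, and when $\pi_1$ is not birational one enlarges the members of the family by sweeping them under $\pi_1\pi_2^{-1}\pi_2\pi_1^{-1}$, obtaining tangent subvarieties of strictly larger dimension.

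The one point worth noting is that the paper packages the reduction step more economically. Instead of iterating and checking at each stage that the saturated family is again algebraic (the ``main obstacle'' you flag), the paper simply starts by assuming the fiber dimension $q=\dim\pi_2^{-1}(y)$ is \emph{maximal} among all covering families of algebraic subvarieties tangent to $\F$; such a maximum exists since $q\le n$. If $\pi_1$ were not birational, the single enlarged subvariety $\pi_1\pi_2^{-1}\pi_2\pi_1^{-1}(Z_y)$ through a general point would have dimension $\ge q+1$ and still be tangent to $\F$ (cf.\ \cite[Lemma 9.1.6.1]{kol}), contradicting maximality. This avoids the bookkeeping of repeated Chow/Hilbert constructions. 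Your iterative phrasing is equivalent but carries that extra technical load; the maximality argument absorbs it in one stroke. Your discussion of the descent of $\F$ to a foliation $\G$ on $Y$ is more explicit than the paper's, which simply asserts that $\pi$ is the pull-back map.
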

\begin{proof}
	When $\pi_1:Z\to X$ is birational, which means that through a general point passes
	exactly one subvariety $Z_y$ of the family, then $\pi_2\circ\pi_1^{-1}:X\dashrightarrow Y$
	is the pull-back map.
	
	Suppose that our covering family $Z\subset X \times Y$  is such that $\dim (\pi_2^{-1}(y)) = q$ is  maximal.
	If $\pi_1$ is not birational, then take a general point $x \in X$ and let $y \in Y$  be such that
	$x \in Z_y= \pi_1  \pi_2^{-1}(y)$. Then
	$\pi_1 \pi_2^{-1} \pi_2 \pi_1^{-1} (Z_y)$ has dimension at least $q + 1$  at $x$ and is tangent to
	$\F$  by construction, see  \cite[Lemma 9.1.6.1]{kol}. This contradicts the maximality of the dimension. The lemma follows.
\end{proof}


\section{Canonical singularities}\label{S:canonical}

Let $\F$ be a foliation on a projective  manifold $X$. Following \cite[Section 2]{McQ}, see also \cite[Section 1]{MP}, we will say that $\F$ has canonical singularities if, for every birational morphism $\pi: Y \to X$ from a smooth projective manifold $Y$ to $X$, we have that the divisor $K_{\pi^* \F} - \pi^* \KF$ is effective. An irreducible subvariety $S \subset \sing(\F)$ is a non-canonical singularity for $\F$ if there exists a birational morphism $\pi: Y \to X$ and an irreducible component $E$ of the exceptional divisor of $\pi$ such that $\pi(E) = S$ and $\ord_E(K_{\pi^* \F} - \pi^* \KF)<0$. Otherwise $S$ is a canonical singularity of $\F$.

\begin{remark}
	If we blow-up a foliation with canonical singularities, then the resulting foliation may not have canonical singularities,
    even when we blow-up along smooth centers. The property of having canonical singularities is only preserved by blow-ups along
    centers invariant by the foliation or everywhere transverse to it, cf. \cite[Fact I.2.8]{McQ}.
\end{remark}

\begin{remark}
    In the definition of canonical/non-canonical singularities above, we can always restrict to birational morphisms which are obtained by composition of
    blow-ups along smooth centers. Indeed, by \cite[Theorem 7.17]{Hart} any birational morphism $\pi:Y\to X$ between projective varieties can be described
    as the blow-up along a suitable ideal $\mathcal I$. Hence, Hironaka's resolution of singularities implies that $\pi$ is dominated by
    $\tilde \pi : Z \to X$ a blow-up along smooth centers, i.e. there exists a birational morphism $q:Z \to Y$ such that $\tilde \pi = q \circ \pi$, cf. \cite[Theorem 2.5]{BEV}.
    Since $E$ has codimension one, $q$ is an isomorphism at a general point of $E$ and therefore the order of
    $K_{\pi^* \F} - \pi^* \KF$ along $E$ will be the same as the order of $K_{\tilde \pi^* \F} - \tilde \pi^* \KF$ along the unique irreducible divisor on $Z$ dominating $E$.
\end{remark}

For a thorough discussion of the concept of canonical singularities, and  other related kind of singularities
(terminal, log-canonical, log-terminal) the reader is redirected to \cite[Section I]{McQ}
where he can  find a  proof of the next proposition  \cite[Fact I.2.4]{McQ}.

\begin{prop}\label{P:canonicaldim2}
A singular point $p$ of a foliation $\F$ on a smooth surface $S$ is canonical if, and only if, there exists
a germ of holomorphic vector field $v$ at  $p$ defining   $\F$  having one
of the following forms:
\begin{enumerate}
    \item the vector field $v$ has semi-simple and invertible linear part with quotient of eigenvalues not belonging to $\mathbb Q^+$; or
    \item the vector field $v$ is analytically conjugated to $ x \frac{\partial}{\partial x} + (ny + x^{n+1}) \frac{\partial}{\partial y}$ for some positive integer $n$; or
    \item the vector field $v$ has linear part with zero determinant and non-zero trace.
\end{enumerate}
\end{prop}
\begin{remark}\label{R:dim2can}
Let us make some further comments on this result. The singularities of type (1) and (3) are the so-called reduced singularities and are the simplest models of singularities appearing after a suitable sequence of blowing-ups $\pi$ over any singular point of a foliation on a surface. This is Seidenberg's resolution Theorem (see \cite[Theorem 1, p.13]{Brunella}). The additional type (2) is called "Poincar\'e-Dulac" singularity (see \cite [p.114]{Brunella}). For instance, the type (1) (non degenerate reduced singularities) corresponds to foliations defined locally by a vector field of the form $v=\lambda_1 x\frac{\partial}{\partial x}+\ \lambda_2 y\frac{\partial}{\partial y}+ (\textit{higher order terms})$ with $\dfrac{\lambda_1}{\lambda_2}\notin{\mathbb Q}^+.$ Take a foliation $\F$ on a surface $S$ admitting only non degenerate reduced  singularities and consider the blow-up $\pi_p$ over a point $p$ in $S$. It can be easily verified that the exceptional divisor $E={\pi_p}^{-1}(0)$ is invariant by $\pi_p^*\F$ an that this latter foliation admits exactly one or two singularities on $E$, depending on whether $p$ is a regular point of $\F$ or not. Moreover, these singularities are also of type (1).
One can also check that the canonical bundle of $\F$ and $\pi_p^*\F$ are related by the formula
$$K_{\pi_p^*\F}=\pi_p^* K_\F +l(p)E$$
where $l(p)=1$ if $p$ is regular point of $\F$ and $l(p)=0$ otherwise. As a by-product, when $\pi$ is a sequence of blowing-ups, we get that $K_{\pi^*\F}={\pi}^* K_{\F} + D$ where $D$ is an effective divisor supported on the exceptionnal divisor of $\pi$. This gives an illustration of  Proposition \ref{P:canonicaldim2}.

It is also worth recalling that the union of germs of invariant analytic curves (separatrices) for this three types of singularies is a normal crossing curve (\cite[Chapter 1]{Brunella}). We refer again to \cite [Chapter 1,2]{Brunella} for further details on these aspects, including also singularities of type (2) and (3).
\end{remark}

It is not hard to deduce from Proposition \ref{P:canonicaldim2}  a description of    codimension two canonical singularities of codimension one foliations.

\begin{prop} \label{P:codim2}
Let $\F$ be a codimension one foliation $\F$ on a projective manifold. Let $S\subset \sing(\F)$ be a codimension two irreducible component of the singular set of $\F$.
If $S$ is a canonical singularity of $\F$, then, at a Euclidean neighborhood $U$ of a general point
of $S$, the foliation $\F$ is the pull-back under a submersion $f: U \to V$ of a foliation $\mathcal G$ with canonical singularities on a neighborhood $V$ of the origin of $\mathbb C^2$.
\end{prop}
\begin{proof}
    Let $p \in S$ be a general point and $\omega$ be a holomorphic $1$-form defining $\F$ at a neighborhood $U$ of $p$.
    After clearing denominators we can assume that $\omega$ has singular set of codimension at least two.

    Take a general surface $\Sigma \subset X$ intersecting $S$ transversally at $p$. Since $\dim \Sigma=2$
    all the singularities of $\mathcal G = \F_{|\Sigma}$ are isolated. We claim that $S$ is a canonical singularity for $\F$
    if, and only if, $p$ is canonical singularity for $\F_{|\Sigma}$.

    Indeed, let $\pi:Y \to X$ be a composition of blow-ups along smooth centers  and let $E$ be  an irreducible component
    of the exceptional divisor of $\pi$ such that $\pi(E) =S$.
    Since $S$ has codimension two, we may restrict $\pi$ to an open subset $Y_0$ of $Y$ which contains $\pi^{-1}(p)$ and such that the induced map
    $\pi_0 : Y_0 \to X_0$  is a composition of blow-ups along smooth centers of codimension two, all of them   dominating $S$.
    Since $\Sigma$ is transverse to $S$, all the successive strict transforms of $\Sigma$ are also transverse to the irreducible components of the singular set
    of the corresponding strict transforms of $\F$ which dominate $S$. Therefore
    $\tilde \Sigma$, the strict transform of $\Sigma$ under $\pi$, is smooth along $\pi^{-1}(p)$. Let $E_{\Sigma}$ be an irreducible component
    of $\tilde \Sigma$. Once one remarks that
    \[
        \ord_E (K_{\pi^*\F} - \pi^* \KF) = 1-  \ord_E( (\pi^*(\omega)_0) ).
    \]
    and that a similar formula holds for $K_{\mathcal G}$, it follows that
    \[
        \ord_E( K_{\pi^*\F} - \pi^* \KF) = \ord_{E_{\Sigma}} ( K_{(\pi_{|\tilde{\Sigma}})^*\mathcal G} - (\pi_{|\tilde \Sigma})^* K_{\mathcal G}).
    \]
    Therefore  $S$ is  a non-canonical singularity for $\F$ if, and only if,  $p$ is  a non-canonical singularity for $\mathcal G$.

    From now on, assume that $S$ is a canonical singularity for $\F$.
    Let $v$ be a vector field with isolated singularities defining $\mathcal G= \F_{\Sigma}$ at a neighborhood of $p$.
    If the quotient of eigenvalues of the linear part of $v$ at $p$ is different from $-1$, then $d\omega(p) \neq 0$ and $S$,
    at a sufficiently small neighborhood $U$ of $p$, is a Kupka singularity for $\F$. As was shown in \cite{kupka}, the vector fields annihilating  $d\omega$
    define a smooth codimension two foliation tangent to $\F$, and the projection to $\Sigma$ along the leaves of such foliation
    define a submersion $f :(U,p) \to (\Sigma,p)$
    such that $f^* \mathcal G = \F$.
    If instead the quotient of eigenvalues of $v$ is equal to $-1$, then
    $d \omega$ vanishes identically on $S$, and we cannot apply Kupka's Theorem. Nevertheless
    the same phenomena persists as we now proceed to prove.
    Notice that the coefficients of $\omega$ generate the defining ideal $\mathcal I$ of $S$. After a local change of coordinates, we can assume
    that $S=\{x_1=x_2=0\}$ and that
     \[
         \omega = u ( x_1dx_2 + x_2 dx_1 + \omega_{\ge 2} )
     \]
     where $u$ is a unity and $\omega_{\ge 2}$ have coefficients in $\mathcal I^2$.
     If $\xi$ is a nowhere zero vector field tangent to $S$, say $\xi = \frac{\partial }{\partial x_i}$ for some $i\ge 3$, then the contraction $\omega(\xi)$ belongs to $\mathcal I^2$.
     Therefore we can find a vector field $\xi_{\ge 1}$ with coefficients in $\mathcal I$ such that $\omega(\xi+ \xi_{\ge 1})=0$. Using the local flow of  $\xi + \xi_{\ge 1}$ we   produce a
     submersion  $f_{n-2}: (\mathbb C^n,0) \to (\mathbb C^{n-1},0) $ and a codimension one foliation $\mathcal G_{n-1}$
     such that $\F = f_{n-1}^* \mathcal G_{n-1}$. We proceed inductively to conclude the proof of the proposition.
\end{proof}

\begin{cor}\label{C:deminormal}
Let $\F$ be a codimension one foliation with canonical singularities on a projective manifold $X$.
If $H$ is an invariant algebraic reduced hypersurface, then, outside a closed subset $R\subset H$ of codimension $2$, the  only singularities of $H$
are normal crossing, i.e. $H$ is normal crossing in codimension one. In particular, $H$ is demi-normal in the terminology of \cite[Chapter 5]{kollarsing}.
\end{cor}
\begin{proof}
The fact that $H$ is normal crossing in codimension one follows from the Remark \ref{R:dim2can} and the description of codimension two
canonical singularities given in Proposition \ref{P:codim2}. Since the ambient space is smooth, $H$ (as well as any local complete intersection)
is Cohen-Macaulay and therefore satisfies Serre's  condition $S_2$.
\end{proof}

\subsection{Uniruled foliations with canonical singularites}
From the definition of canonical singularities combined with Theorem \ref{T:MCP},  we obtain the following characterization of uniruledness for
foliations with canonical singularities.

\begin{thm}\label{T:uniruled iff non psef}
Let $\F$ be  a foliation with canonical singularities on a projective manifold. Then $\F$ is uniruled if, and only if,  the canonical bundle of $\F$ is not pseudo-effective.
\end{thm}
\begin{proof}	
If $\KF$ is not pseudo-effective, then Theorem \ref{T:MCP} implies $\F$ is uniruled.

Suppose now that $\F$ is uniruled and with canonical singularities.
We want to prove that $\KF$ is not pseudo-effective.
	
	The uniruledness of $\F$ implies the existence of  a projective manifold $Z$ endowed with a surjective morphism $p:Z \to B$ to another projective manifold $B$  and
	a surjective morphism  $\pi : Z \to  X$ such that the general fibers of $p$ are rational curves, and the image of a general fiber of $p$ under $\pi$ is
	generically tangent to the foliation. After replacing $B$ by a general subvariety, we can further assume that $Z$ and $X$ have  the same dimension.
	
	Let $F$ be a general fiber of $p$. There is no loss of generality in assuming that $\pi(F)$ is a rational curve
	which does not intersect the singular set of $\F$.
	Indeed, if for a general fiber $F$ of $p$ the curve  $\pi(F)$ intersects the singular set of $\F$, then
	there exists a divisor $H$ on $Z$ which dominates $B$ and is mapped to the singular set of $\F$. According to \cite[Theorem VI.1.3]{kollar}, we can find a composition
	of blow-ups  $q:\tilde X\to X$ centered at $\pi(H)\subset \sing(\F)$ such
	that $q^{-1} \circ \pi$ does not contract $H$.
	Since the singularities of $\F$ are canonical, by hypothesis, we have that the canonical bundle of  $q^*\F$
    is the sum of a pseudo-effective line bundle with an effective line bundle, hence still pseudo-effective.
	Replacing $X$ by $\tilde X$ and $Z$ by the elimination of indeterminacies of $q^{-1}\circ \pi$, we arrive
	in a situation where a general fiber $F$ of $p$ is mapped by $\pi$ to the complement of the singular set of $\F$.
		
	Let $L$ be the 	leaf of $\F$ containing $\pi(F)$. Since $\pi(F)$ moves inside $L$ in a family of
    rational curves of dimension at least $\dim(\F)-1=\dim L -1$, it follows that $K_L\cdot \pi(F)<0$. Using that $\pi(F)$ is disjoint from the singular set of $\F$,
	we deduce the identity $K_L \cdot \pi(F) = \KF \cdot \pi(F)$. As $\pi(F)$ moves in a family covering $X$, it follows  that $\KF$ is not pseudo-effective.
\end{proof}

\begin{cor}\label{C:uniruled}
	Let $\F$ be a foliation with numerically trivial canonical bundle on a projective manifold $X$.
	The foliation $\F$ has non-canonical singularities if, and only if, $\F$ is uniruled.
\end{cor}
\begin{proof}
	If $\F$ is uniruled and has canonical singularities, then Theorem \ref{T:uniruled iff non psef} implies $\KF$ is not pseudo-effective.
    Hence, if $\F$ is uniruled and $\KF$ is numerically trivial, then $\F$ cannot have canonical singularities.
		
	Assume now that $\F$ has non-canonical singularities. Then, there exists a projective manifold $Y$, and a  birational morphism
	$\pi:Y\to X$, such that the foliation $\mathcal G = \pi^*\F$ has canonical bundle of the form
	\[
	K_{\mathcal G} = \pi^* \KF + D
	\]
	where $D$ is a non-effective divisor. As $\pi^* \KF$ is numerically trivial, we have that $K_{\mathcal G}$ is pseudo-effective if, and only if,
    $D$ is pseudoeffective. But, according to \cite[Corollary 13]{kollarlarsen}, a contractible divisor is pseudo-effective if, and only if, it is effective.
	Since $D$ is not effective, we deduce that $K_{\mathcal G}$ is not pseudo-effective.
	Theorem \ref{T:MCP}  implies $\mathcal G$ is uniruled, and so is $\F$.
\end{proof}

\begin{cor}
Let $\F$ be a foliation with canonical singularities on a projective manifold $X$.
If $\KF$ is numerically equivalent to zero, then the tangent sheaf of $\F$ is semi-stable
with respect to any polarization of $X$, i.e., for every reflexive subsheaf $\mathcal E \subset T\F$,
we have $c_1(\mathcal E) \cdot H^{n-1} \le 0$ for every ample divisor $H$.
\end{cor}
\begin{proof}
    Fix a polarization $H$ of $X$ and, aiming at a contradiction, assume that $T\F$ is not semi-stable. Consider the Harder-Narasimham filtration of $T\F$ with respect to $H$.
    The maximal semi-stable subsheaf of $T\F$ is closed under Lie brackets (see \cite[Lemma 9.1.3.1]{kol}) and therefore defines
    a foliation $\mathcal G$. Notice that $\mathcal G$ is a subfoliation of $\F$
    with non pseudo-effective canonical bundle since $-c_1(T\mathcal G) \cdot H^{n-1} = K_{\mathcal G} \cdot H^{n-1}<0$.
    It follows that $\mathcal G$ is a foliation with rationally connected general leaf, and $\F$ is uniruled. But at the same time
    $\F$ has canonical singularities and numerically trivial canonical bundle, contradicting Theorem \ref{T:uniruled iff non psef}.
\end{proof}

\begin{cor}\label{C:Frat}
Let $X$ be a uniruled projective manifold, and let $R:X \dashrightarrow R_X$ be the maximal rationally connected meromorphic fibration on $X$.
Let $\F$ be a codimension one foliation with canonical singularities on $X$. If $\KF$ is numerically trivial, then
every leaf of $\F$ dominates $R_X$, i.e. the restriction of $R$ to every  leaf has generically maximal rank. Furthermore, if $\F_{rat}$ is the foliation defined
by $R$, then $\det N\F_{rat}$ is a numerically trivial line bundle.
\end{cor}
\begin{proof}
    Let us call ${\F}_{rat}$ the codimension $q\ (=\dim\ R_X)$ foliation with algebraic leaves induced by  $R:X\dasharrow R_X$.
    After \cite{GHS}, we know that $R_X$ is not uniruled. Therefore, \cite{BDPP} implies  that ${\F}_{rat}$ is given by an holomorphic $q$-form on $X$ without
    zeroes in codimension $1$, and with coefficients in a line bundle $E=\det N \F_{rat}$ such that $E^*$ is pseudo-effective.
    The  restriction of such $q$-form on the leaves of $\F$ defines a non trivial section $\sigma$  of  $\Omega_\F^q\otimes E$, where $\Omega_\F^q$ denotes the
    $q$-th wedge power of the cotangent sheaf of $\F$. The previous Corollary implies that  $T\F$ is semi-stable with respect to
    any polarization of $X$. Therefore, the section $\sigma$ has no zeroes in codimension $1$, and $E = \det N \F_{rat}$ must be numerically trivial.
    On the other hand, any leaf which does not dominate $R_X$ is contained in the zero locus of $\sigma$.
\end{proof}

\subsection{Foliations defined by closed rational 1-forms}
Starting from  Section \ref{S:conormal}, most of this paper will be devoted to prove that a
codimension one foliation with numerically trivial canonical bundle and canonical singularities
is defined by a closed rational $1$-form after an \'etale covering.

In this subsection, we study the polar divisor of foliations with canonical singularities
defined by closed rational $1$-forms. We will make use of the concept  of log canonical pair,
see  \cite{kollarsing} for a thorough treatment of this concept.


\begin{prop}\label{P:logcanonical}
Let $\F$ be a codimension one foliation  on a projective manifold $X$.
Assume that $\F$ is defined by a closed rational $1$-form $\omega$. Let $\Delta=(\omega)_{\infty}$ be the polar divisor of $\omega$ and let $\Delta_{\mathrm{red}}$ be the reduced
divisor with the same support. 
If $\F$ has canonical singularities, then the pair $(X,\Delta_{\mathrm{red}})$
is log canonical.
\end{prop}
\begin{proof}
    Let $\pi:Y \to X$ be a log resolution of the pair $(X,\Delta_{\mathrm{red}})$. As usual let us write
    \[
      K_Y + \pi_*^{-1} \Delta_{\mathrm{red}} = \pi^*(\KX + \Delta_{\mathrm{red}}) + \sum a_i E_i
    \]
    where the sum runs through the exceptional divisors of $\pi$. We want to show that, under our
    assumption, the integers $a_i$ are greater than, or equal to $-1$.

    Let $\G= \pi^* \F$ be the pull-back of $\F$ under $\pi$. The normal bundle of $\F$ is equal
    to $\mathcal O_X( \Delta - Z)$, where $\Delta = (\omega)_{\infty}$ is the polar divisor of $\omega$ and $Z=(\omega)_0$ is the divisorial part  of the  zero set  of $\omega$.  Now $\pi^* \omega$ is a closed rational $1$-form defining $\G$ and therefore
    $N \G = \mathcal O_Y( (\pi^* \omega)_{\infty} - (\pi^*\omega)_0)$. Since $\omega$ is closed, then
    we can write locally
    \[
        \omega = \sum_{i=1}^k \lambda_i \frac{df_i}{f_i} + d \left( \frac{g}{f_1^{n_1} \cdots f_k^{n_k}}\right)
    \]
    where  $g$ is a germ of holomorphic function, $f_i$ are irreducible germs of  holomorphic functions, $\lambda_i$ are complex numbers, and $n_i$ are natural numbers.
    The coefficients $n_i$ correspond to the coefficients of $\Delta- \Delta_{\mathrm{red}}$ on the hypersurface $\{f_i=0\}$. Writing down a similar expression for
    $\pi^*\omega$ we deduce that
    \[
      N \mathcal G = \pi^* ( \Delta - \Delta_{\mathrm{red}} ) + \pi_*^{-1}(\Delta_{\mathrm{red}})  - \pi_*^{-1}(Z)  +  \sum m_i E_i
    \]
    where the sum runs over all exceptional divisors, and $m_i$ are integers not greater than $1$.

    If we compute $K_{\mathcal G} - \pi^* \KF$ using adjunction and the formulas above, we get
    \[
        K_{\mathcal G} - \pi^* \KF =  K_Y + N\mathcal G  - \pi^* (\KX + N\F) =     \sum (a_i + m_i) E_i \, .
    \]
    If $\F$ has canonical singularities, then $0 \le a_i + m_i \le a_i + 1$. Therefore  $a_i \ge -1$, i.e.
    the pair $(X, \Delta_{\mathrm{red}})$ is log canonical.
\end{proof}


\section{The  conormal bundle}\label{S:conormal}

\subsection{From the non-vanishing of cohomology to a foliation by curves}

\begin{lemma}\label{L:h1}
Let $\F$ be a codimension one foliation with numerically trivial canonical bundle on a compact K\"ahler manifold $X$.
Then, for every integer $i$ between $0$ and $n=\dim X$, we have that
\[
    H^i(X,N^* \F)^* \simeq  H^{n-i}(X,\KF) \simeq H^{0}(X,\wedge^i TX\otimes N^*\F).
\]
\end{lemma}
\begin{proof}
	By Serre duality, $H^i(X,N^*\F)^*$ is isomorphic to $H^{n-i}(X,\KX \otimes N\F)$.
    By adjunction, $H^{n-i}(X,\KX\otimes N \F)$ is nothing but $H^{n-i}(X,\KF)$.
    Since $\KF$ admits a flat unitary connection, Hodge theory implies $H^{n-i}(X,\KF)$ is isomorphic to $H^0(X, \Omega^{n-i}_X \otimes \KF^*)$.
	Finally, from the identity  $\Omega^{n-i}_X = \wedge^{i} TX \otimes \KX$, we obtain  that $H^i(X,N^*\F)$ is isomorphic to $H^0(X,\wedge^i TX \otimes N^*\F)$, as claimed.
\end{proof}

Assume that $H^1(X,N^*\F) \neq 0$ and let $v$ be a non-zero twisted vector field with coefficients in $N^*\F$ produced through Lemma \ref{L:h1}.
The twisted vector field $v$ determines a foliation $\mathcal G$ with canonical bundle given by the formula
\begin{equation}\label{E:KG}
    K_{\mathcal G} = N^*\F \otimes \mathcal O_X(-(v)_0) \, .
\end{equation}

If $\omega \in H^0(X,\Omega^1_X \otimes N\F)$ is a twisted $1$-form defining $\F$, then $\omega(v) \in H^0(X,\mathcal O_X)$. Therefore
$\omega(v)$ is either identically zero, or everywhere non-zero. Consequently  the foliation $\mathcal G$ is either contained in $\F$, or it induces a splitting $TX= T\F \oplus T\mathcal G$ of the tangent bundle of $X$.

When the tangent bundle of $X$ splits, then  $\F$ is clearly a smooth foliation.
If instead the contraction of $\omega \in H^0(X,\Omega^1_X \otimes N \F)$ with $v \in H^0(X,TX\otimes N^* \F)$  vanishes identically, i.e.,  the foliation $\mathcal G$ defined by $v$ is tangent to $\F$,
then we have the following result.

\begin{lemma}\label{L:omega(v)=0}
Let $\F$ be a codimension one foliation with numerically trivial canonical bundle on a projective manifold $X$.
Assume that $\F$ has canonical singularities.
If there exists a twisted vector field $v \in H^0(X,TX \otimes N^*\F)$ everywhere tangent to $\F$, then
the canonical bundle of $X$ is pseudo-effective.
\end{lemma}
\begin{proof}
Since $\F$ has canonical singularities, Corollary \ref{C:uniruled} implies that $\F$ is not uniruled.
Therefore, Theorem \ref{T:MBM} implies that the canonical bundle of $\mathcal G$  (the foliation defined by $v$) is pseudo-effective.
But $\KX$ is numerically equivalent to $N^*\F$ by adjunction, and $N^*\F = K_{\mathcal G} \otimes \mathcal O_X((v)_0)$ according to Equation (\ref{E:KG}).
Hence $\KX$ is numerically equivalent to the product of a pseudo-effective line bundle with an effective line bundle. It follows that $\KX$ is
pseudo-effective.
\end{proof}

It will be seen later, in Section \ref{S:uniruled}, that the pseudo-effectiveness of $\KX$ automatically implies that $\F$ is smooth.

\subsection{Sufficient conditions for the non-vanishing of cohomology}

\begin{lemma}\label{L:logdiv}
	Let $\F$ be a codimension one foliation on a projective manifold $X$ 
	Suppose there exists a
	closed analytic subset $R \subset X$ of codimension at least $3$, and  a
	$\mathbb C$-divisor $D=\sum_{i=1}^k  \lambda_i H_i$    supported on $\F$-invariant hypersurfaces  $H_i$
    such that for every $x \in X \setminus  R$, we can locally write
	\begin{equation*}
	\omega\wedge\left(\sum\lambda_i \frac{dh_i}{h_i}\right)=d\omega
	\end{equation*}
	where $h_1, \ldots, h_k$ are local equations  for $H_1, \ldots, H_k$ and $\omega$ is a suitable defining one form of $\F$.
	 Then $c_1(N^* \F )- c_1(D)$ belongs to $H^1(X, {N^*\F})$.
\end{lemma}
\begin{proof}
		One can assume for simplicity that $R=\emptyset$ since  $H^1(X,N^* \F) \simeq H^1(X\setminus R,N^*\F)$.
		Let $\mathcal U = \{ U_{\alpha} \}$ be a sufficiently fine  open covering of $X$. If $\{ g_{\alpha \beta} \in \mathcal O^*(U_{\alpha \beta})\}$
		is a cocycle representing $N \F$, then, according to our hypothesis, the collection of holomorphic $1$-forms $\{ \theta_{\alpha \beta} \in \Omega^1_X(U_{\alpha \beta}) \}$  defined by
		\[
		  \theta_{\alpha\beta} = \frac{dg_{\alpha \beta}}{g_{\alpha \beta}}-\left(\sum\lambda_i\frac{dh_{i,\beta}}{h_{i,\beta}}-\sum\lambda_i\frac{dh_{i,\alpha} }{h_{i,\alpha}}\right)
		\]
		vanishes along the leaves of $\F$. Thus, we have an induced class in  $H^1(X, {N^*\F})$ with image in $H^1(X, \Omega^1_X)$ representing
        $c_1(N^*\F) - \sum \lambda_i c_1(\mathcal O_X(H_i))$.
\end{proof}

When the foliation $\F$ has singular set of codimension at least three, we can apply the lemma above to conclude
that $N^*\F$ is numerically equivalent to zero, or that $H^1(X,N^*\F)\neq 0$. Indeed, under this assumption, we are always in the latter
case as one can see by applying the next lemma in the case $D=0$.

\begin{lemma}\label{L:logdivbis}
	Let $\F$ be a codimension one foliation on a projective manifold $X$ satisfying the assumptions of Lemma \ref{L:logdiv}.
    Moreover, assume that the divisor $D$ has real coefficients strictly greater than $-1$ and that its support is normal crossing in codimension one (see the definition in Corollary \ref{C:deminormal}) . 
	Then $H^1(X, {N^*\F})\not=0$.
\end{lemma}
\begin{proof}
 We will keep the notations used in the  proof of Lemma \ref{L:logdiv}.
    If $D$ is not numerically equivalent to $N^* \F$, then
    the result follows from Lemma \ref{L:logdiv}. Hence, we will assume from now on
    that ${N^*\F}$ is numerically equivalent to $D$.

  By assumption, there exists near every point $x \in X$  a function $\varphi$
    expressed  locally as $\sum_{j=1}^k \varphi_j$ with $\varphi_j= \lambda_j \log {|h_j|}^2$. Notice that the functions
    $\varphi_j$ 
    satisfy the identity  $\frac{i}{2\pi}\partial\overline{\partial}\varphi_j=\lambda_j[H_j]$
    of currents. Moreover, for some appropriate choice of local defining forms $\omega_\alpha$ of $\F$, we get the following equalities
    \begin{equation}\label{E:closedness}
        \omega_{\alpha} \wedge \partial\varphi_{\alpha} =d\omega_{\alpha} \quad \text{ and } \quad
        \partial\varphi_{\alpha}=\sum\lambda_i\frac{dh_i}{h_i} \,
    \end{equation}
    on open subsets $U_{\alpha}$ of the covering $\mathcal U$.

    Since $N^*\F$ is numerically equivalent to $D$, we can interpret $\varphi$ as a local  weight of a
    (singular) metric on $N^* \F$. Therefore the local expressions
    \begin{equation}\label{E:localexpression}
        ie^{\varphi_{\alpha}+\log{|H_\alpha|}}\omega_{\alpha}\wedge\overline{\omega_{\alpha}}
    \end{equation}
    give rise to a  positive $(1,1)$-current $T$ on $V=X\setminus\sing(\F)$ for a suitable choice of $H_\alpha\in{ \mathcal O}^* (U_\alpha)$. Indeed, $e^{\varphi_{\alpha}}$ is locally integrable on $V$, since $\lambda_i>-1$. Note also that $T_\alpha= ie^{\varphi_{\alpha}}\omega_{\alpha}\wedge\overline{\omega_{\alpha}}$ is closed on $U_\alpha \cap V$, thanks to (\ref{E:closedness}).
    Beware that $e^{\varphi_{\alpha}}$ may fail to be integrable near some point of the singular locus. Nevertheless, as $\mbox{sing}\ \F$ has codimension $\geq 2$, $T_\alpha$ extends  uniquely to a closed positive current on $U_\alpha$ (namely the trivial extension of $T_\alpha$), again denoted $T_\alpha$. Observe now that the globally defined positive current $T= |H_\alpha|T_\alpha$ is also closed. Indeed, $i\partial\overline{\partial} T= i\partial\overline{\partial}(|H_\alpha|) T_\alpha$ is identically zero as an exact positive $(2,2)$-current. Because $T_\alpha$ is directed by the foliation, this implies that $|H_\alpha|$ is pluriharmonic,  hence constant on the leaves, whence the closedness of $T$. Note that replacing $\varphi_\alpha$ by $\varphi_\alpha + \log {|H_\alpha| }$ does not affect equality  (\ref{E:closedness}). In particular, one can suppose that $H_\alpha=1$ in (\ref{E:localexpression}).

     Remark that $T$ splits on $V$ as $-\eta\wedge\omega$, where $\eta=ie^{\varphi_{\alpha}}\overline{\omega_{\alpha}}$ is a  $\overline{\partial}$-closed $(0,1)$-current with values in $N^* \F$ globally defined on $V$.

       By assumptions, $D$ is locally normal crossing on $X-S$ where $S$ has codimension $\geq 3$ in $X$. Then the splitting above is indeed defined on $X-S$.


 In particular, if we denote by $B_p$ an open ball centered at $p\in X$, the restriction of $\eta$ to $B_p -S$ is indeed $\overline{\partial}$ exact.
    Therefore, 
    by Mayer-Vietoris,
    the cohomology class $\{\eta\}$ defined in
    $H^1(X-S,N^* \F)$ extends to a  class in $H^1(X,N^* \F)$. The positivity of $T$ implies that this
    class is  non--trivial, and the lemma follows.
\end{proof}

For later use, let us record a consequence of the above. Notice that, in its statement, we refer to the residues
of a $1$-form with coefficients in a flat line bundle. Of course, when the line bundle is not trivial, such residues are not well-defined complex numbers,
but  it  makes sense to ask whether they are zero or not.

\begin{cor}\label{C:closed without residues}
Let $\F$ be a codimension one foliation with canonical singularities.
If $\F$ is defined by a closed rational $1$-form $\eta$ without residues  and with coefficients in a flat line bundle,
then $H^1(X,N^*\F)\neq 0$. In particular, if $\F$ is a fibration, then
$H^1(X,N^*\F) \neq 0$.
\end{cor}
\begin{proof}
    Since $\F$ has canonical singularities, Proposition \ref{P:codim2} implies that the supports of $(\eta)_0$ and $(\eta)_{\infty}$ are disjoint.

    If $x \in |(\eta)_{0}|$,  then there exists a neighborhood $U$ containing $x$
    and a holomorphic primitive of $f: U \to \mathbb C$ of $\eta$, i.e., $\eta_{|U} = df$. If $\omega$ is a holomorphic $1$-form defining $\F_{|U}$
    with zeros of codimension at least two, then $\omega_{|U} = h df$ for some meromorphic function $h : U \to \mathbb P^1$ without zeros. It follows that
    \[
        d \omega = dh \wedge df = \omega \wedge \left( - \frac{dh}{h} \right) \, .
    \]
    Since  the residues of $\frac{dh}{h}$ are all negative, we conclude, using also Corollary
    \ref{C:deminormal},  that $\F$ satisfies the assumptions
    of Lemma  \ref{L:logdivbis} at a neighborhood of  $|(\eta)_{0}|$.

    Let now $x \in |(\eta)_{\infty}|$. Since the supports of $(\eta)_0$ and $(\eta)_{\infty}$ are disjoint
    and  $\eta$ has no residues, we can write  $\eta_{|U} = d(f^{-1})$ where $f$ is a holomorphic function $f:U\to \mathbb C$
    with zero set contained in $|(\eta)_{\infty}|$. As before, we can write $\omega_{|U} = hdf$ for some meromorphic function $h$ without zeros on $U$, and
    conclude that the assumptions of Lemma \ref{L:logdivbis} are satisfied
    everywhere. Therefore  $H^1(X,N^*\F) \neq 0$.

    Finally, if $\F$ is a fibration, then we can take $\eta = df$ where $f:X \to \mathbb P^1$ is any first integral for $\F$.
\end{proof}

\section{Criterion for uniruledness}\label{S:uniruled}

The main goal of this section is to obtain information
about the ambient manifold when  there exists a codimension one
foliation with numerically trivial canonical bundle with non-empty singular set.

\subsection{Pseudo-effectiveness of the canonical bundle implies smoothness}

A particular case of the result below ($ p = \dim X-1$) already appeared in \cite{Touzet3}. The arguments here are a
simple generalization of the arguments therein and heavily rely on an integrability criterion due to Demailly in \cite{Demailly}. They have also high order of contact with the arguments carried out by Bogomolov
in \cite{Bogomolov}, see also \cite[Section 5]{Peter}.

\begin{thm}\label{T:psef} Let  $X$ be a compact K\"{a}hler manifold with $\KX$ pseudo-effective, $L$  a flat line bundle on $X$,  $p$  a positive integer
and $v \in H^0(X,\bigwedge^p TX\otimes L)$  a non-zero section. Then the zero set of  $v$ is empty.
\end{thm}
\begin{proof}
	Let $q = \dim X - p$. From the isomorphism $\bigwedge^p TX  \cong \Omega^q_X \otimes \KX^{*}$ we see that $v$ defines  a twisted $q$-form $\omega \in H^0(X,\Omega^q \otimes \KX^* \otimes L)$.

    The pseudo-effectiveness of $\KX$ implies the existence of
    singular hermitian metric on it with non-negative curvature. The same holds for $\KX\otimes L^*$ by flatness. A metric on $\KX \otimes L^*$ can be identified
    with a section $g$ of $(\KX \otimes L^*) \otimes \overline{(\KX \otimes L^*)}$ and we can use it to define a $(q,q)$-form $\eta$ with $L^{\infty}_{loc}$ coefficients   through the formula $\eta = g(v,v)$.
    Concretely, if $\{ U_\alpha\}$ is an open covering of $X$, then there exists  plurisubharmonic functions $\varphi_\alpha$ on $U_\alpha$ such that
    \[
        |h_{\alpha \beta}|^2 = \exp(\varphi_\alpha - \varphi_\beta) \, .
    \]
    where $h_{\alpha \beta}$ is a cocycle defining  $\KX\otimes L^*$. Thus
    \[
        \eta = i \exp(  \varphi_\alpha) \omega_\alpha \wedge \overline{\omega_\alpha}.
    \]

    Demailly,  in \cite{Demailly}, proved that  the identity of currents $d\omega_\alpha = -\partial \varphi_\alpha \wedge \omega_\alpha$
    holds true, see also the proof of  \cite[Proposition 2.1]{BPT}. {In other words, $\omega$ is $\nabla_g$-closed, where $\nabla_g$ is the Chern connection associated to $g$. Consequently,
    $d \eta=0$  as a current, and $\eta$  defines a class in $H^{q,q}(X,\mathbb C)$.
    Poincar\'e-Serre duality implies the existence of  $[\rho] \in H^{p,p}(X,\mathbb C)$ such that
    \begin{equation}\label{E:poinc}
	    [\eta] \wedge [\rho] \neq 0.
	\end{equation}

    Decompose $\eta$ as the product $\eta=v\wedge g(\overline{v}) $ where $g(\overline{v})=g(\cdot,{v})$ is seen as a   $(0,q)$-form with values in  $\KX\otimes L^*$
    ($ g(\overline{v})=  i \exp( \varphi_\alpha) \overline{\omega_\alpha}$ in a local patch). It is crucial to remark that $g(\overline{v})$ is $\overline{\partial}$-closed, thanks to the $d$-closedness of $\eta$.

   Therefore  the non vanishing of the cup-product (\ref{E:poinc}) can be reformulated as
   \[ [g(\overline{v})]\wedge[v\wedge\rho]\not=0\] where the bilinear pairing involved is $H^{0,q}_{\overline \partial} (\KX\otimes L^*)\otimes H^{0,n-q}_{\overline \partial} (L)\to H^{2n}(X,\mathbb C)$.

    By Hodge symetry, one can choose a  representative of $[ v \wedge \rho ] \in H^{0,n-q}_{\overline \partial} (L)$  of the form $\overline{\gamma}$ 
      where $\gamma$ is a holomorphic $(n-q)$-form valued in the unitary flat bundle $L^*$. In particular $g(\overline{v})\wedge\overline{\gamma}\not=0$, and then, by conjugation, $v\wedge\gamma\not=0$. On the other hand, $v\wedge\gamma$   
      is a section of  $\KX \otimes \KX^*\otimes L\otimes L^*= \mathcal O_X$.
    It follows that  $v$ has no zeros.}
\end{proof}

\begin{thm}\label{T:split}
Let $\mathcal D$ be a distribution of codimension $q$ on a compact K\"{a}hler manifold $X$.
If $c_1(T \mathcal D)=0$ and $\KX$ is pseudo-effective, then $\mathcal D$ is a smooth foliation.
Moreover, there exists a smooth foliation $\G$ on $X$ of dimension $q$
such that  $TX = T\mathcal D \oplus T \mathcal  G$. Finally, if $X$ is projective, then the canonical bundle of $\D$ is torsion.
\end{thm}
\begin{proof}
    The integrability follows from \cite{Demailly}. The previous theorem implies that $\mathrm{sing}(\D) = \emptyset$ and
    that there exists a holomorphic $(n-q)$-form ${\gamma}$ which restricts to a volume form on the leaves of the foliation defined
    by $\mathcal D$.

    In order to prove the result  we just need to modify ${\gamma}$ to obtain that its kernel is the expected complementary subbundle
    defining $\G$.
    This can be done as follows. There is a natural monomorphism of  sheaves
    \[
        \psi:\bigwedge^{n-q-1} T\mathcal D\rightarrow \Omega^1_X,
    \]
    defined by the contraction of  ${\gamma}$ with $n-q-1$ vectors fields tangent to $\D$.
    Notice that the projection morphism of $\Omega^1_X$ onto ${T^*\mathcal D}$ is actually an isomorphism in restriction to $\mathrm{Im}\psi$. Its inverse  provides
    a splitting of the exact sequence
    \[
        0 \to {N^*\mathcal D} \rightarrow\Omega^1_X\rightarrow {T^* \mathcal D} \to 0 \, .
    \]
    Since $\det T^* \D$ is numerically trivial, $\mathrm{Im}\psi$ is an integrable subbundle of $\Omega^1_X$. This subbundle  defines the conormal bundle
    of the sought foliation $\G$.

    Let $L = K \mathcal D^*$ and  $\gamma \in H^0(X, \Omega^{p}_X \otimes L)$, $p=n-q$,  be a twisted $p$-form defining $\G$.
    After passing to a finite \'{e}tale covering, we can assume that the integral Chern class of $L$ is zero, i.e., $L \in \Pic^0(X)$.

    Since $L$ is flat, Hodge symetry implies that $H^0(X, \Omega^{p}_X \otimes L) \cong H^p(X, L^*)$. Let $m = h^p(X,L^*)$ and consider
    the Green-Lazarsfeld set
    \[
        S = \{ E  \in \Pic^0(X) \, | \, h^p(X, E) \ge m \} \, .
    \]
    According to \cite{Simpson}, if $X$ is projective, then  $S$ is a finite union of translates of subtori by torsion points. To conclude the proof of the Theorem, it suffices
    to show that $L^*$ is an isolated point of $S$. Let $\Sigma \subset \Pic^0(X)$ be an  irreducible
    component of $S$ passing through $L$. If $\mathcal P$ is the restriction of the Poincar\'{e} bundle to  $\Sigma \times X$, and $\pi: \Sigma \times X \to \Sigma$
    is the natural projection, then, by semi-continuity, $R^p \pi_* \mathcal P$ is locally free at a neighborhood of $L$. Therefore, we can extend
    the element  $H^p(X,L^*)$ determined by $\gamma$ to a holomorphic family of non-zero elements with coefficients in line bundles $E \in \Sigma$ close
    to $L^*$. Hodge symetry gives us a family of holomorphic $p$-forms with coefficients in the duals of these line bundles. Taking the wedge
    product  of these $p$-forms with a $q$-form defining $\mathcal D$ we obtain, by transversality of $\D$ and $\G$,
    non-zero sections of $H^0(X, \KX \otimes N \mathcal D \otimes E^*)$ for $E$ varying on a small neighborhood of $L^*$ at  $\Sigma$.
    Since $\KX \otimes N \mathcal D \otimes E^* \in \Pic^0(X)$, this implies that $E \in \Sigma$ if,
    and only if, $E=\KX \otimes N \mathcal D = L^*$.
    Thus $\Sigma$ reduces to a point.
\end{proof}

\begin{remark}
    The Theorem above provides evidence toward  the following  conjecture of Sommese (\cite{so}):
    {\it if $\F$ is a smooth foliation of dimension $p$
    with trivial canonical bundle on a compact K\"{a}hler manifold $X$, then there exists a holomorphic $p$-form on $X$ which is non-trivial when
    restricted to the leaves of $\F$.}
\end{remark}


\subsection{Criterion for uniruledness}
Theorem \ref{T:psef} allow us to deduce Theorem \ref{TI:D} of the Introduction which we state again below for convenience.
It  confirms  \cite[Conjecture 4.23]{Peternell}.

\begin{thm}[Theorem \ref{TI:D} of the Introduction]
Let  $X$ be a projective manifold and $L$ be a pseudo-effective line bundle on $X$.
If there exists non trivial $v \in H^0(X,\bigwedge^p TX\otimes L^*)$ vanishing at some point, then
$X$ is uniruled. In particular, if there exists a foliation $\F$ on $X$ with $c_1(T\F)$ pseudo-effective and $\sing(\F) \neq \emptyset$,
then $X$ is uniruled.
\end{thm}
\begin{proof}
    If  $X$ is not uniruled, then $\KX$ is pseudo-effective \cite[Corollary 0.3]{BDPP}.
    Theorem \ref{T:MBM} together with Mehta-Ramanathan Theorem \cite{MR} imply
    that the Harder-Narasimhan filtration of the restriction of   $TX$  to curves obtained as complete intersections of sufficiently ample divisors
    has no subsheaf of positive degree. Therefore the same holds true for $\bigwedge^p TX$, and consequently $L$
    cannot intersect ample divisors positively. This property together with its pseudo-effectiveness implies  $c_1(L) = 0$.
    We can apply Theorem \ref{T:psef}  to conclude that
    $\mathrm{sing}(  v)= \emptyset$, and obtain a contradiction.
\end{proof}

\begin{cor}\label{C:smooth}
Let $\F$ be a codimension one foliation with canonical singularities and numerically trivial canonical bundle.
If $H^1(X,N^*\F)\neq 0$, then $\F$ is smooth.
\end{cor}
\begin{proof}
    If $H^1(X,N^* \F) \neq 0$, then Lemma \ref{L:h1} implies the existence of a nonzero $v \in H^0(X, TX \otimes N^*\F)$.
    Let $\omega \in H^0(X,\Omega^1_X\otimes N\F)$ be a twisted $1$-form defining $\F$. If $\omega(v)\neq0$ in $H^0(X,\mathcal O_X)$, then clearly
    $\F$ is smooth. If instead $\omega(v) = 0$, then Lemma \ref{L:omega(v)=0} implies $\KX$ is pseudo-effective, and Theorem \ref{T:psef} implies that
    $\F$ is also smooth in this case.
\end{proof}

\subsection{Foliations with compact leaves}
In the statement below, by a compact leaf of a foliation we mean a compact invariant submanifold of the same dimension as the foliation
which does not intersect the singular set.

\begin{thm}\label{T:leaves}
Let $\F$ be a codimension $q$ foliation with numerically trivial canonical bundle on a compact K\"{a}hler manifold $X$ of dimension $n>q$.
If $\F$ admits a compact leaf, then $\F$ is smooth, and there exists a smooth foliation $\G$ of dimension $q$ everywhere
transverse to $\F$. Furthermore, if $X$ is a projective manifold, then $\KF$ is a torsion line-bundle.
\end{thm}
\begin{proof}
    Let $\omega$ be a K\"{a}hler form and consider $\xi=\omega^{n-q}$.
    Let $v\in H^0(X,\wedge^{n-q} TX \otimes \KF)$ be a $(n-q)$-vector defining
    $\F$. Contract $\xi$ with $v$ in order to obtain a $\overline \partial$-closed $(0,n-q)$-form $\alpha$ with coefficients
    in $\KF$.
    If this form is not $\overline \partial$-exact, one obtains
    by Hodge symetry a  holomorphic  $(n-q)$-form with coefficients in the dual of  $\KF$
    which is by construction non zero on a general leaf of  $\F$. By adjunction formula, this $(n-q)$-form
    defines a foliation $\G$ totally transverse to $\F$.

    To prove that $\alpha$ is not $\overline{\partial}$-exact, observe that the restriction of $\alpha$ to
    a compact leaf $L$ can be naturally identified with a non-trivial volume form. Indeed, if $(z_1, \ldots, z_{n-q})$
    are local coordinates on $L$, then the restriction of $v$ to $L$ is, up to a scalar multiple, nothing but
    \[
         \left( \frac{\partial }{\partial z_1} \wedge \cdots \wedge \frac{\partial}{\partial z_{n-q}} \right) \otimes ( dz_1 \wedge \cdots \wedge dz_{n-q} )
    \]
     and therefore we can identify $\alpha_{|L}$ with $\xi_{|L}$.

     Finally, when $X$ is projective we can argue exactly as in the proof of Theorem \ref{T:split} in order
     to deduce that $\KF$ is torsion.
\end{proof}

\begin{cor}\label{C:alg leaves}
Let $\F$ be a foliation with numerically trivial canonical bundle on a rationally connected manifold $X$.
If $\F$ has a compact leaf, then $\F$ is the foliation by points.
\end{cor}
\begin{proof}
    First notice that the foliation $\G$ constructed in Theorem \ref{T:leaves} has canonical bundle $\KG$ numerically equivalent
    to $\KX$. Consider now the inclusion $\KG^* \to \wedge^q TX$ and restrict it to a very free rational curve $C$. Because
    $TX_{|C}$ is a direct sum of line bundles of strictly positive degree, this morphism must vanish identically along $C$
    unless $q = \dim X$.  Since the very free rational curves cover $X$, we deduce that $T\G=TX$, i.e. $\G$  is the foliation with just one leaf
    and $\F$ is the foliation by points.
\end{proof}
 When all the leaves are compact, the structure of the foliation is particularly simple:
\begin{thm}\label{TH:smoothalgint}
Let $\F$ be a smooth and algebraically integrable foliation with numerically trivial canonical bundle
on a $n$-dimensional projective manifold $X$. Then, perhaps after passing to
a finite \'{e}tale covering, the manifold $X$ is a product of projective manifolds and $\F$ is defined
by the projection to one of them.
\end{thm}
\subsection{Proof of Theorem \ref{TH:smoothalgint}.}

By \cite[Proposition 2.5]{HV}, $\F$ is induced by a morphism $\varphi:X\to Y$ onto a normal projective variety $Y$ with connected fibers.
Let us state in the first place the following proposition due to Druel.

\begin{prop}\cite[Lemma 6.4,6.5]{Druel2}.\label{P:productbasechange}
Theorem \ref{TH:smoothalgint} holds true whenever $\varphi$ is smooth. Moreover, if the irregularity of every fiber of $\varphi$ is zero, the product structure on an \'etale cover is induced by an appropriate base change: there exists complex manifolds $Y_1$ and $F$, as well as a finite \'etale cover $Y_1\to Y$ such that $Y_1\times_Y X\simeq Y_1\times F$ as varieties over $Y_1$.
\end{prop}

\begin{remark}
In the original statement of \cite{Druel2}, \textit{loc.cit}, it is also assumed that the canonical bundle $K_X$ is pseudo-effective in order to ensure that $\F$ admits a transverse foliation. On the other hand, the existence of this foliation is guaranteed by Theorem \ref{T:leaves}.
Note also, that the existence of a transverse foliation implies that $\varphi$ (assumed to be smooth) is a locally analytically trivial fibration. In particular all  fibers are isomorphic.
\end{remark}


\begin{prop}\label{P: calabiyaucase}
Suppose that a fiber of $\varphi$ is a Calabi-Yau manifold (as the terminology may vary, this means here that its universal cover is compact). Then Theorem \ref{TH:smoothalgint} holds true.
\end{prop}
\begin{proof}
The proof follows \textit{verbatim} the nice arguments developped by Druel in the proof of Proposition 6.6, \textit{loc.cit}. For the sake of convenience of the reader, we recall the strategy and also extract what is really necessary in our setting.

By applying Hwang-Viehweg's \'etale version of Reeb stability theorem \cite[Theorem 2.7]{HV}, there exists a finite set of indices $\Gamma$,  morphisms $g_\gamma:Y_\gamma\to Y$ with $Y_\gamma$  smooth, finite onto their images and such that
\begin{enumerate}
\item $\bigcup_{\gamma\in\Gamma}g_\gamma (Y_\gamma)=Y$.
\item The normalization $X_\gamma$ of $Y_\gamma\times_Y X$ is smooth and  the canonical projection $\varphi_\gamma:X_\gamma\to Y_\gamma$ (which then defines $f_\gamma^*\F$ by the natural smooth \'etale morphism $f_\gamma:X_\gamma\to X$) is smooth projective with connected fibers.
\end{enumerate}

In that way, we obtain, by taking $Y_1$ to be the normalization of $Y$ in the Galois closure of the compositum of the fields $\C(Y_\gamma)$, that the projection $\varphi_1: X_1\to Y_1$ is smooth with connected fibers, where $X_1$ is the (singular) variety defined as the normalization of $Y_1\times_Y X$. Note that $\varphi_1$ defines the foliation $f_1^*\F$ where $f_1:X_1\to X$ is the natural morphism.
Finally, by considering a desingularization $Y_2$ of $Y_1$, we inherit on the smooth projective variety $X_2=Y_2\times_{Y_1}X_1$  a foliation given by the (smooth) projection $\varphi_2:X_2\to Y_2$ which is nothing but the pull-back of $f_1^*\F$ under the natural morphism $f_2:X_2\to X_1$. This construction can be performed from the datum of any regular and algebraically integrable foliation. In the case where $c_1(T_\F)=0$, an elementary but fundamental calculation yields 
 \[
 K_{X_2/Y_2}
 \sim 0 \, ,
 \]
 i.e. $K_{X_2/Y_2}$ is linearly equivalent to zero.

In our case, we have moreover that the fibers $F$ of $\varphi_2$ are Calabi-Yau manifolds. One can thus invoke Proposition \ref{P:productbasechange} to claim that there exists a finite \'etale cover $g_3:Y_3\to Y_2$ such that $X_3:=Y_3\times_{Y_2} X_2\simeq Y_3\times F$. This allows to "go back" in the previous construction  and exhibit a (singular) variety $X_4$ having a product structure  $Y_4\times F$ and equipped with finite dominant morphism $f_4:X_4\to X_1$ such that ${(f_1\circ f_4)}^*\F$ is given by the projection $\varphi_4:X_4\to Y_4$. Indeed, one takes $Y_4$ as the normalization of $Y_1$ in $\C (Y_3)$ and $X_4$ as the normalization of $Y_4\times_{Y_1} X_1.$  By replacing $Y_4$ by a suitable finite cover, one can assume that there exists a finite group $G\in\Aut (Y_4)$ such that $Y\cong Y_4/G$. In particular, $X\cong X_4/G$ for the natural extention of $G$ on the fiber product $X_4$. By construction, $G$ preserves the vertical fibration on $X_4\cong Y_4\times F$. One can then write $g(x,y)=(g_1(x), g_2(x,y))$. The family $g_2(x,.)$ of automorphisms of $F$ depends continuously on $x$, hence is constant thanks to the vanishing of $h^0(F,T_F)$.
In other words,  $G$ acts diagonally on $Y_4\times F$.  Replacing $G$ by some appropriate quotient , there is no loss of generalities in assuming that $G$ injects into $\Aut (Y_4)$ and $\Aut (F)$. This prevents  the existence of a non trivial element of $G$ fixing pointwise an hypersurface in $X_4$. Then the quotient  map $X_4\to X_4/G\cong X$ is \'etale in codimension one, hence \'etale by   Nagata-Zariski purity criterion. This concludes the proof.
\end{proof}

The following statement will give a way to deal with the general case by extracting the abelian factor and reducing to the previous case.
\begin{prop}
Assumptions as in \ref{TH:smoothalgint}. There exists two regular algebraically foliations $\F_1$ and $\F_2$ such that $T_\F=T_{\F_1}\oplus T_{\F_2}$ with the additional properties:
\begin{enumerate}
\item $K_{\F_1}\sim K_{\F_2}\sim 0$
\item The leaves of $\F_2$ are Calabi-Yau manifolds.
\item Up to passing to a finite \'etale cover, $T_{\F_1}$ is holomorphically trivial. In particular $\F_1$ is defined by the orbits of an algebraic subgroup $A$ of ${\Aut}0 (X)$ isomorphic to an abelian variety.
\end{enumerate}
\end{prop}
\begin{proof}
Recall firstly  (cf.\cite{Beauville}) that, for a compact K\"ahler manifold $X$ with $c_1(X)=0$,  the splitting  $TX:=\mathcal E  \oplus{ \mathcal{E}^\perp}$ with respect to a Ricci flat metric, where $\mathcal E$ is the flat factor, is indeed intrinsically defined, i.e. does not depend on the choice of the metric and that, up to finite cover, correspond to a unique decomposition $T\times V$ where $T$ is a compact torus and $V$ a simply connected Calabi-Yau manifold. In particular this splitting is invariant by automorphisms and descend to finite \'etale quotients.

With the notations of the proof of Proposition \ref{P: calabiyaucase}, let us examine on $X_\gamma$ the pull-back foliation ${\F}_\gamma=f_\gamma^*\F$ defined by the smooth projective morphism $\varphi_\gamma$. Each fiber $F$ of $\varphi_\gamma$ is equipped with a canonical splitting of $T_\F$ recalled above.  Note that $X_\gamma$ comes equipped with a transverse foliation ${\mathcal G}_\gamma=f_\gamma^*\mathcal G$ to ${\F}_\gamma$, where $\mathcal G$ is a foliation on $X$ transverse to $\F$ whose existence is guaranteed by Theorem \ref{T:leaves}.  In particular, the fibration $\varphi_\gamma$ is locally trivial from the analytic viewpoint and consequently these fiberwise splittings fit together to produce two regular and algebraically integrable subfoliations ${\mathcal F}_{1,\gamma}$ (corresponding to the flat factor on the fibers), ${\mathcal F}_{ 2,\gamma}$ of ${\mathcal F}_\gamma$ such that
\[T_{{\mathcal F}_{\gamma}}=T_{{\mathcal F}_{ 1,\gamma}}\oplus T_{{\mathcal F}_{2,\gamma}}\]

These two foliations obviously descend to foliations on $f_\alpha (X_\alpha)$ which glue together when $\gamma$ varies and finally define on $X$ two regular algebraically integrable subfoliations $\F_1$, $\F_2$ of $\F$  such that
\[T_{\mathcal F}=T_{{\mathcal F}_1}\oplus T_{{\mathcal F}_ 2}\]

Moreover, observe that on each fiber $F$ of $\varphi_\alpha$, ${T_{\F_1}}|F$ is endowed with a unique flat holomorphic connection ${\nabla_F}$ with finite monodromy coming from the decomposition $T\times V$ on a finite cover. Note that $\nabla_F$ extends uniquely as a flat connection on $T_{\F_1}$ in restriction to an analytic neighborhood of $F$ trivialized as a product $ U\times F$ by the transverse foliation ${\mathcal G}_\gamma$. By uniqueness of $\nabla_F$ on each fiber, these connections glue together to produce a flat connection $\nabla_\gamma$ which descends to $f_\gamma (X_\gamma)$. For the same reasons of uniqueness and the fact that the ${\mathcal G}_\gamma$ come from the same foliation $\mathcal G$ on $X$, the flat connections ${f_\gamma}_*\nabla_\gamma,\ \gamma\in\Gamma$ glue together, hence give rise to a flat holomorphic connection $\nabla$ on $T_{{\F}_1}$. In particular $K_{\F_1}\sim 0$ and automatically $K_{\F_2}\sim 0$ thanks to the above splitting.

We claim now that $\nabla$ has finite monodromy. Remark that it suffices to show that for some $\gamma\in\Gamma$, ${f_\gamma}_* {\nabla}_\gamma$ (or equivalently $\nabla_\gamma$) has finite monodromy, since  $\pi_1 (f_\gamma (X_\gamma))$ surjects onto $\pi_1(X)$.
For this purpose, observe that, by fixing some $x\in Y_\gamma$ and considering $F={\varphi_\gamma}^{-1} (x)$,  $\G_\gamma$ defines a representation (the holonomy representation)
\[\rho:\pi_1(x,Y_\gamma)\to \Aut (F)\]
by lifting loops on leaves of ${\mathcal G}_\gamma$.

Because the morphism $\varphi_\gamma$ is projective, the image $G$ of $\rho$ has to fix some K\" ahler class on $F$, hence contains a subgroup of $\Aut^0 (F)$ of finite index. Hence, by replacing $X_\gamma$ by ${\tilde Y}_\gamma\times_{Y_\gamma} X_\gamma$,  with ${\tilde Y}_\gamma$ some suitable finite \'etale cover of $Y_ \gamma$, one can assume without any loss of generalities that $G\subset \Aut^0 (F)$, this latter being an Abelian variety. Take $g\in G$, $g=\exp(tX)$ for $X\in H^0(F,T_F)$, $t\in\C$. Denote by $L_y$ the stalk at $y\in F$ of the local system $L$ associated to $\nabla_F$ and consider $u\in L_y$. Because the commutation relation $[X,L]=0$ holds ($L$ lifts as a constant local system on $T\times V$), one can infer that $g_* u\in L_{g (y)}$ must coincide with the analytic continuation of $u$ along $s\to\text{exp}(\alpha (s)X)(y)$ where $\alpha:[0,1]\to \C$ is a continuous path joining $0$ to $t$.

Thanks to the exact sequence of the fibration $\varphi_\gamma$, we can  easily conclude that the monodromy group of $\nabla_\gamma$ coincides with that of its restriction $\nabla_F$ and in particular, is finite. This finishes the proof of the Proposition.
\end{proof}

By \cite[Proof of Theorem 1.2]{Brion} (see also \cite[proof of proposition 6.6]{Druel2}), up to replacing $X$ by a finite \'etale cover, one can suppose that $X=A\times Y$, $A$ Abelian variety where $\F_1$ is defined by the projection ${pr}_Y:X\to Y$. Note that $\F_2$, whose leaves have irregularity $0$, is automatically tangent to the second projection ${pr}_A:X\to A$. Moreover,  the action of $A$ on each leaf $\mathcal L$ of $\F$ has to preserve ${T_{\F_2}}|\mathcal L$.
This implies that ${\F_2}$ projects via ${pr}_Y$ as a foliation on $Y$. This last property, combined with Proposition \ref{P: calabiyaucase} allows to conclude the proof of Theorem \ref{TH:smoothalgint}.\qed

\subsection{Algebraic leaves of singular foliations} A generalization of the argument used to prove Theorem \ref{T:leaves}
imposes constraints on algebraic leaves of singular foliations with trivial canonical class.

\begin{thm}\label{T:alg leaves}
Let $\F$ be a foliation with numerically trivial canonical bundle on a projective manifold $X$.
Assume $\sing(\F) \neq \emptyset$. If $L$ is an algebraic leaf of $\F$, then  the Zariski closure of $L$ is uniruled.
\end{thm}
\begin{proof}
    Since $\F$ is singular and $X$ is smooth, $\F$ has dimension and codimension different from zero.
    Theorem \ref{T:leaves} implies that $L$ is not compact. Let $\overline L$ be the
    closure of $L$ and $n : Y \to \overline L$ be its normalization.

    According to \cite[Proposition 4.5]{ADK} (see also \cite[Definition 3.4 and Lemma 3.5]{AD})   there exists an effective Weil divisor $\Delta$ on $Y$ such that
    $n^* \KF$ is linearly equivalent to $\KY + \Delta$. In the terminology of \cite{AD}, the pair $(Y,\Delta)$ is
    a log leaf of $\F$. Let $m : Z \to Y$ be a resolution of singularities of $Y$.

    If $\Delta >0$, then $\KZ$ is not pseudo-effective. Indeed, if $H$ is a very ample divisor on $Y$ and $d$ is the dimension of
    $Y$, then $H^{d-1} \cdot \Delta >0$ and $H^{d-1}$ can be represented by a curve in $Y$ which does not intersect the singular locus
    of $Y$, or even better the centers of $m$. Thus $\KZ \cdot (m^*H)^{d-1} = \KY \cdot H^{d-1} < 0$. Since $m^* H$ is big and nef, it follows
    that $\KZ$ is not pseudo-effective.   Thus by \cite{BDPP} $Z$ is uniruled and the same holds for $\overline L$.
    Similarly, if $\Delta =0$ and the singularities of $Y$ are not canonical, then $\KZ$ is also not pseudo-effective, and we conclude as
    before.

    If $\Delta =0$ and the singularities of $Y$ are canonical, then $m^*\KY$ injects into $\KZ$. Let $q$ be the codimension of $\F$ and
    $\alpha$ be the $(0,n-q)$-form  with coefficients in $\KF$ constructed in the proof of Theorem \ref{T:leaves}, i.e. $\alpha$ is the
    contraction of the $(n-q)$-th power of a K\"{a}hler form $\omega$ with a $(n-q)$-vector field $v \in H^0(X, \wedge^{n-q} TX \otimes \KF)$ defining $\F$. In order to conclude
    the proof, we will show that $\alpha$ is non-trivial in cohomology, what implies that $\F$ is smooth (by the  proof of Theorem \ref{T:leaves}) contradicting our hypothesis.
    For that, let $p = n \circ m$
    be the composition of the normalization with the resolution of singularities.  The pull-back $p^* \alpha$ is a $(0,n-q)$-form with coefficients in $p^* \KF = m^* \KY$.
    Since $m^*\KY$ injects into $\KZ$, it yields a $(0,n-q)$-form on $Z$ with coefficients in $\KZ$. Thus we can regard $p^* \alpha$ as a $(n-q,n-q)$-form on $Z$. Away from the union of the critical locus of $p$ with the pre-image
    of the singular locus of $\F$,   we have, as in the proof of Theorem \ref{T:leaves}, the identity   $p^*\alpha=p^* \omega^{n-q}$ up to replacing $v$ by a suitable scalar multiple. By continuity, this equality holds on the whole $Z$. Hence,  $p^*\alpha$ is a  non-trivial semi-positive form what guarantees the non-triviality of  $\alpha$  in cohomology.
\end{proof}

\begin{thm}\label{T:fibration}
Let $\F$ be a foliation with numerically trivial canonical bundle and canonical singularities
on a projective manifold $X$. If the general leaf of $\F$ is algebraic, then, perhaps after passing to
a finite \'{e}tale covering, the manifold $X$ is a product of projective manifolds and $\F$ is defined
by the projection to one of them.
\end{thm}
\begin{proof}
    Since $\KF$ is numerically trivial and the general leaf is algebraic, Theorem \ref{T:alg leaves} combined
    Corollary \ref{C:uniruled} implies that $\F$ is a smooth foliation. We can thus apply Theorem \ref{TH:smoothalgint}
  in order to conclude.
\end{proof}



\section{Deformations of free morphisms}\label{S:free morphisms}
The goal of this section is to prove the following result.

\begin{thm}\label{T:transproj}
Let $\F$ be a codimension one foliation having at worst canonical singularities on an uniruled projective manifold $X$.
If the canonical bundle of $\F$ is numerically trivial, then $\F$ is a transversely projective foliation.
\end{thm}

Before dealing with the proof of this result, let us recall the definition of transversely projective foliation following \cite{RH}, see also \cite{Croco1,Croco2}.
A {\it transversely projective} structure for a codimension one foliation  $\F$ on a projective manifold $X$ is the data $(P, \mathcal H, \sigma)$  of a $\P^1$-bundle $P\to X$,
a Riccati foliation $\mathcal H$ on $P$,
and a meromorphic section $\sigma:X\dashrightarrow P$
such that $\sigma^* \mathcal H = \F$. Notice that this last condition implies that $\sigma$  is generically transverse to  $\mathcal H$.

Another triple $(P',\mathcal H',\sigma')$  defines the same transversely projective
structure if it is derived from the initial one by a birational bundle
transformation $P\dashrightarrow P'$.
Up to such birational bundle transformations, one can always
assume that $P$ is the trivial bundle $X\times\P^1$ with vertical coordinate $z$, and $\sigma$ is the   section $\{z=0\}$.
The foliation $\mathcal H$ is therefore defined by a Riccati $1$-form
\begin{equation*}
\omega = dz+\omega_0+\omega_1 z+\omega_2 z^2
\end{equation*}
where $\omega_0,\omega_1,\omega_2$ are rational $1$-forms
on $X$. The integrability of $\mathcal H$, $\omega \wedge d \omega=0$,  is equivalent to the equations
\begin{equation}\label{eq:FlatnessCondition}
    \left\{\begin{matrix}
        d\omega_0=\hfill\omega_0\wedge\omega_1\\
        d\omega_1=2\omega_0\wedge\omega_2\\
        d\omega_2=\hfill\omega_1\wedge\omega_2
    \end{matrix}\right.
\end{equation}
Since we have normalized the section $\sigma$ to $\{ z=0\}$,  then   $\F$ is defined by the rational $1$-form $\omega_0$.
Hence, a  foliation $\F$ on $\P^n$ is transversely
projective if, and only if, there exist rational $1$-forms
$\omega_0,\omega_1,\omega_2$ on $X$
satisfying (\ref{eq:FlatnessCondition}) where $\omega_0$
defines the foliation $\F$; we recognize the definition of transversely projective foliations given in \cite{Scardua}.

If there exists a transversely projective structure $(P,\mathcal H, \sigma)$ for $\F$ in which $\omega_2=0$ (or
equivalently  there exists a section $\tilde \sigma : X \dashrightarrow P$ invariant by $\mathcal H$),  then we say that
$\F$ is a {transversely affine foliation}.

If  there exists a transversely projective structure $(P,\mathcal H, \sigma)$ for $\F$ in which $\omega_2=\omega_1=0$, then
$\F$ is a {transversely Euclidean foliation}. In other words, a foliation $\F$ is transversely Euclidean if, and
only if, $\F$ can be defined by a closed rational $1$-form.



Transversely projective foliations behave rather nicely with respect to dominant rational maps
as the lemma below shows.

\begin{lemma}\label{L:empurra}
	Let $F: Y \dashrightarrow X$ be a dominant rational map between  projective manifolds,  $\F$
	be a codimension one foliation  on $X$, and $\mathcal G= F^* \F$ be  the foliation  induced
	by $\F$ on $Y$. Then the following assertions hold true.
	\begin{enumerate}
		\item The foliation $\mathcal G$ is transversely projective if, and only if, $\F$ is transversely projective.
		\item The foliation $\mathcal G$ is transversely affine if, and only if, $\F$ is transversely affine.
	\end{enumerate}
\end{lemma}
\begin{proof}
	If $\F$ is transversely projective (resp. affine or Euclidean), then $\mathcal G=F^* \F$ is
	transversely projective (resp. affine or Euclidean) since such a structure $(P,\mathcal H, \sigma)$ for
	$\F$ pulls back to a similar structure $(F^* P , F^* \mathcal H, F^* \sigma)$ for $\mathcal G$.
	
	Suppose now that $\mathcal G$ is transversely projective (resp. affine). Restrict $\mathcal G$ and its projective structure
	to a sufficiently general submanifold having the same dimension as $X$.  This reduces the problem to case where
	$F$ is a generically finite rational map, and we can apply \cite[Lemme 2.1, Lemme 3.1]{Casale} to conclude.
\end{proof}

We note that, if $\mathcal G$ is transversely Euclidean, then  $\F$ is not necessarily transversely Euclidean.
The simplest examples are linear foliations on Abelian surfaces
which are invariant by multiplication by $-1$ while the defining $1$-forms are not. By taking the  quotient,
we obtain foliations which are transversely affine, but not transversely Euclidean.

\subsection{Deformation of free morphisms}
Let $X$ be a projective manifold of dimension $n$. The morphisms from $\mathbb P^1$ to $X$ are parametrized by a
locally Noetherian scheme $\Mor(\mathbb P^1, X)$
\cite[Theorem I.1.10]{kollar}.
The Zariski tangent space of $\Mor(\mathbb P^1, X)$ at a given morphism $f: \mathbb P^1 \to X$ is canonically
identified with $H^0(\mathbb P^1,f^* TX)$ \cite[Proposition 2.4]{Debarre} \cite[Theorem I.2.16]{kollar}. To understand this,
suppose $\Mor(\mathbb P^1, X)$ is smooth at a point $[f]$, and let $\gamma :(\mathbb C,0) \to \Mor(\mathbb P^1,X)$ be a
germ of holomorphic curve   in  $\Mor(\mathbb P^1, X)$ such that $\gamma(0)= [f]$.
If we fix $x \in \mathbb P^1$ and compute $\gamma'(0)(x)$, we obtain a vector at $T_{f(x)} X \simeq (f^* TX)_x$.  Thus $\gamma'(0) \in H^0(\mathbb P^1, f^* TX)$.

For an arbitrary morphism $f$, the local structure of $\Mor(\mathbb P^1, X)$
at a neighborhood of $[f]$ can be rather nasty, but if $h^1(X, f^* TX) = 0$, then $\Mor(\mathbb P^1, X)$ is smooth
and has dimension $h^0(\mathbb P^1, f^* TX)$ at a neighborhood of $[f]$, see \cite[Theorem I.2.16]{kollar} or  \cite[Theorem 2.6]{Debarre}.

If $[f] \in \Mor( \mathbb P^1, X)$, then  Birkhoff-Grothendieck's Theorem implies that $f^*TX$ splits as a sum of line bundles
$\mathcal O_{\mathbb P^1}(a_1) \oplus \mathcal O_{\mathbb P^1}(a_2) \oplus \cdots \oplus \mathcal O_{\mathbb P^1}(a_n)$
with $a_1 \ge a_2 \ge \cdots \ge a_n$. The morphism $f$  is called free when $a_n \ge 0$. Notice that $h^1(\mathbb P^1, f^* TX) = 0$ when $f$
is a free morphism. Therefore  $\Mor(\mathbb P^1, X)$ is smooth of dimension $h^0(\mathbb P^1, f^*TX) = n + \sum_{i=1}^n a_i$ at a neighborhood of $[f]$.

The scheme $\Mor(\mathbb P^1, X)$ comes together with an evaluation map
\begin{align*}
    F: \mathbb P^1 \times \Mor(\mathbb P^1, X) &\longrightarrow X \\
    (x, [f]) &\longmapsto f(x) \, .
\end{align*}

Let $f$ be a free morphism and $M =M_f$ be the irreducible component of $\Mor(\mathbb P^1, X)$ containing $[f]$.
The evaluation map $F$  has maximal rank at any point of
a neighborhood of $\mathbb P^1 \times \{[f]\}$ in $\mathbb P^1 \times M$ \cite[Corollary II.3.5.4]{kollar}. Indeed, it has maximal rank at a neighborhood
of any point of the $\Aut(\mathbb P^1)$-orbit of $[f]$ under the  action of $\Aut(\mathbb P^1)$ on $\Mor(\mathbb P^1,X)$ defined
by right composition.

\subsection{Tangential foliation on the space of morphisms} Let $\F$ be a foliation on $X$ (not necessarily of codimension one).  We will say that a
germ of  deformation $f_t : \mathbb P^1 \to X$, $t \in (\mathbb C,0)$,  of a  free morphism $f=f_0: \mathbb P^1 \to X$
is tangent to $\F$ if the curves $f_t(x): (\mathbb C,0) \to X$ are tangent to the foliation $\F$ for every $x$ in $\mathbb P^1$.
These deformations  correspond to germs of curves on $M $ tangent to a foliation $\F_{tang}$ on $M $
which we will call the tangential foliation  of  $\F$.

The construction of $\F_{tang}$ is rather simple. Since
$T  M \simeq \pi_* F^* TX$, where $\pi: \mathbb P^1 \times M \to M$ is the natural projection,  the inclusion $T \F \hookrightarrow TX$ gives rise to a morphism
$\pi_* F^* T \F \to T M$. If $\mathcal I$ denotes its image, then we define  $\F_{tang}$ as the foliation
on $M$ determined by the saturation of $\mathcal I$ inside $T M $, i.e., $T \F_{tang}$ is the smallest
subsheaf of $T M $ containing $\mathcal I$ and with torsion free cokernel. The involutiveness of $T \F_{tang}$ follows easily from the
involutiveness of $T\F$ as verified below.

\begin{prop}
The sheaf $T\F_{tang} \subset TM$ is closed under Lie brackets.
\end{prop}
\begin{proof}
It suffices to verify at a neighborhood of a general morphism $[g] \in M $. We can assume for instance
that the image of $g$ is disjoint from the singular set of $\mathcal F$, thus $\mathcal F$ foliates
a neighborhood of $g(\mathbb P^1)$. If $\xi_1, \xi_2$ are germs of  sections of $T \mathcal F_{tang}$ at $[g]$,  then  the orbits of the corresponding
vector fields give rise to deformations of morphisms
$\phi_{1}, \phi_{2}: (\mathbb C,0) \times \mathbb P^1 \to X$ with  $\phi_{i}(0,x) g(x)$ and $\partial_t \phi_{i}(t,x) \in T_{\phi_i(t,x)} \F$ for $i=1,2$. The involutiveness of $\mathcal F$  implies
that $[ \partial_t \phi_{1}(0,x) , \partial_t \phi_{2,t}(0,x) ] \in  T_x \F$ for $x \in \P^1$. The involutiveness of $T\F_{tang}$ follows.
\end{proof}

\begin{remark}
    The idea of studying a foliation through the induced foliations on the space of morphisms is not new, and can be traced backed to
    \cite{Miyaoka}. While it is explored, there, to prove the uniruledness of the ambient manifold, here we will explore the
    uniruledness of the ambient manifold (through the existence of free rational curves) to unravel the structure of the original foliation.
\end{remark}

\subsection{Tangential foliation and transverse structures}

Consider a complex manifold $Y$ endowed with a proper morphism $\pi:Y \to Z$ with connected fibers  to another complex manifold $Z$.
Given a foliation $\mathcal G$ on $Y$, we define the direct image of $\mathcal G$ under $\pi$ as the foliation $\pi_* \G$
on $Z$ with tangent sheaf  given by the saturation in $TZ$ of the image of the natural morphism
\[
\pi_* T\G \to \pi_* TY \to TZ
\]
induced by the composition of the inclusion of $T\G$ in $TY$ with the differential of $\pi$.

Let now $X$ be a uniruled projective manifold carrying a foliation $\F$, and let  $M$ be an irreducible open subset of $\Mor(\mathbb P^1, X)$ formed by free rational curves.
If we consider on $\mathbb P^1 \times M$  the foliation $\mathcal G$ defined as the pull-back of $\F$  under the evaluation morphism, and the codimension one foliation
$\mathcal H$ defined by the fibers of the projection $\mathbb P^1 \times M \to \mathbb P^1$, then the tangential foliation of $\F$ is nothing but the direct image
of the intersection of $\mathcal G$ with $\mathcal H$, i.e.
\[
\F_{tang} = \pi_*( \mathcal G \cap \mathcal H) \, .
\]

When the direct image of a foliation  is non-trivial (i.e. its dimension is different from zero),  then our next result shows that the transverse structure of
the original foliation is constrained.

\begin{thm}\label{T:6.4}
Let $M$ be an algebraic manifold, let $\mathcal G$ be a codimension one foliation on $\mathbb P^1 \times M$,
let $\pi:\mathbb P^1 \times M \to M$ be the natural projection, and let $\mathcal H$ be the codimension one foliation defined by
the fibers of the other natural projection $\rho: \mathbb P^1 \times M \to \mathbb P^1$.
If the general fiber of $\pi$ is generically transverse to $\mathcal G$, and the general leaf of the direct image $\mathcal T= \pi_*(\mathcal G \cap \mathcal H)$
is Zariski dense,  then the codimension of $\mathcal T$ is at most three. Moreover,
\begin{enumerate}
    \item If $\codim \mathcal T =1$, then $\mathcal G$ is defined by a closed rational $1$-form (i.e is transversely Euclidean);
    \item If $\codim \mathcal T=2$, then $\mathcal G$ is transversely affine;
    \item If $\codim \mathcal T=3$, then $\mathcal G$ is transversely projective.
\end{enumerate}
\end{thm}
\begin{proof}
    After replacing $M$ by a Zariski open subset, we can assume that $N\mathcal G$ is of the form $\rho^* \mathcal O_{\mathbb P^1}(\nu+2)$ for
    some non-negative integer $\nu$, and $\mathcal G$ is defined by a rational $1$-form $\Theta$ which can be written as
    \[
        \Theta = (\sum_{i=0}^{\nu} b_i(x) z^i) dz + \sum_{i=0}^{\nu+2} \theta_i z^i
    \]
    where $b_i$ are regular functions on $M$ and $\theta_i$ are holomorphic $1$-forms on $M$. Notice that $\mathcal T$ is the foliation defined by the
    $1$-forms $\theta_i$. If $\Omega$ is the quotient of $\Theta$ by $(\sum_{i=0}^{\nu} b_i(x) z^i)$, then
    \[
        \Omega = dz + \sum_{i\ge i_0} \omega_i z^i
    \]
    where $i_0$ is an integer and $\omega_i$ are rational $1$-forms on $M$. Of course, $\Omega$ is a rational $1$-form defining $\mathcal G$ and the foliation $\mathcal T$ is
    defined by the $1$-forms $\omega_i$.

    The integrability  condition  takes the particularly simple form
    \[
        \Omega \wedge d \Omega = \sum z^{i} dz \wedge d \omega_i + \sum j z^ {i+j-1} \omega_i \wedge dz \wedge \omega_j + \sum z^{i+j} \omega_i \wedge d\omega_j =0 \, .
    \]
    In particular, looking at the coefficient of $z^k dz$, we obtain the existence of constants $\lambda_{ij}^{(k)}$ such that
    \begin{equation}\label{E:chave}
        d \omega_k = \sum \lambda_{ij}^{(k)} \omega_i \wedge \omega_j \,
    \end{equation}
    for any $k \ge i_0$.

    Let $q$ be the codimension of $\mathcal T$, and $p= \dim M-q$ be the dimension of $\mathcal T$.
    Choose $q$ $1$-forms $\alpha_1, \ldots, \alpha_q$ among the $1$-forms $\omega_i$ such that $\alpha_1, \ldots, \alpha_q$ define $\mathcal T$.
    We claim that any of the $1$-forms $\omega_i$ can be
    written as linear combination with constant coefficients of the $1$-forms $\alpha_1, \ldots, \alpha_q$.
    Indeed, for any fixed $k$ there exits rational functions $b_1, \ldots, b_q$ such that
    \[
        \omega_k = \sum_{i=1}^q  b_i \alpha_i \, .
    \]
    Differentiating this expression, taking the wedge product of it with the $(q-1)$-forms $\beta_j = \alpha_1 \wedge \ldots \wedge \widehat{\alpha_j} \wedge \ldots \wedge
    \alpha_q$, and using equation (\ref{E:chave}),  we obtain that
    \[
        db_j \wedge \alpha_j \wedge \beta_j = 0 \implies db_j \wedge \alpha_1 \wedge \cdots \wedge \alpha_q =0 \, .
    \]
    Hence $b_j$ is constant along the leaves of $\mathcal T$, and since the general leaf is Zariski dense, $b_j$ must be constant.
    We conclude the existence of constants $\mu_{ij}^{(k)}$ such that
    \begin{equation}\label{E:chave2}
        d \alpha_k = \sum \mu_{ij}^{(k)} \alpha_i \wedge \alpha_j \, .
    \end{equation}
 Choose $p=\dim M -q$ rational functions $h_1, \ldots, h_p$ on $M$ such that the product
    \[
        \alpha_1 \wedge \ldots \wedge \alpha_q \wedge dh_1 \wedge \ldots \wedge dh_p \neq0
    \]
    does not vanish identically. Let $v_1, \ldots, v_q$ be the unique rational vector fields on $M$ satisfying
    \[
        \alpha_i ( v_j ) = \delta_{ij} \quad \text{ and } \quad dh_i (v_j ) =0 \, ,
    \]
    where $\delta_{ij} $ is the Kroenecker delta.
    Notice that the vector fields $v_1, \ldots, v_q$  satisfy a dual version of equation (\ref{E:chave2}), that is
    \begin{equation*}
        [v_i,v_j] = \sum \mu_{ij}^{(k)} v_k \, .
    \end{equation*}
    Let $\mathfrak v$ be the $q$-dimensional Lie algebra defined by $v_1, \ldots, v_q$.

    We claim that there exists an injective morphism of Lie algebras  from $\mathfrak v$ to the Lie
    algebra of rational vector fields on $\mathbb P^1$. Indeed, for each $i$ there exists a unique
    lift $\hat{v}_i$ of $v_i$ to $\mathbb P^1 \times M$ tangent to $\mathcal G$. More precisely,
    $\hat{v}_i$ is the unique lift of $v_i$ such that $\Omega(\hat{v}_i)=0$. Due to the particular form
    of $\Omega$, it follows that we can write
    \[
        \hat{v}_i = f_i(z) \frac{\partial}{\partial z} + v_i \, ,
    \]
    where $f_i \in \mathbb C(z)$ is a rational function.
    Notice that
    \[
        [\hat{v}_i,\hat{v}_j] =  [f_i(z)\frac{\partial}{\partial z},f_j(z) \frac{\partial}{\partial z}] + [v_i,v_j] .
    \]
    Since $\Omega$ is integrable it follows that the (unique) lift of $[v_i,v_j]$ tangent to $\mathcal G$ is given by
    \[
        \sum \mu_{ij}^{(k)} f_k(z)\frac{\partial}{\partial z} + [v_i,v_j]\,
    \]
    and it must coincide with $[\hat{v}_i,\hat{v}_j]$.
    Hence the map that sends $v_i$ to $f_i(z) \frac{\partial}{\partial z}$ is the sought injective morphism of Lie algebras
    from $\mathfrak v$ to $\mathbb C(z) \frac{\partial}{\partial z}$.

    A classical result of Lie (cf. \cite[Theorem 1.1.1]{Frankpseudo}) says that a finite dimensional Lie subalgebra of
    $\mathbb C(z) \frac{\partial}{\partial z}$ has dimension at most three. Moreover, if its dimension is two, then it is
    isomorphic to the  affine Lie algebra $\mathfrak{aff}(\mathbb C)$, and if its dimension is three, then it its isomorphic to the projective Lie algebra $\mathfrak{sl}(2,\mathbb C)$.
    Therefore $\mathfrak v$ has dimension at
    most three, and consequently the codimension of $\mathcal T$ is at most three.

    If the codimension of $\mathcal T$ is equal to one, then we can write $\Omega =  dz + b(z) \alpha_1$ for suitable $b \in \mathbb C(z)$ and we see that $\Omega/b(z)$ is a closed
    $1$-form defining $\mathcal G$.

    If the codimension of $\mathcal T$ is equal to two, then  there exists
    $v_1, v_2 \in \mathfrak v$ satisfying $[v_1,v_2] = v_1$.
    Let $a(z)  \frac{\partial}{\partial z}$ be the image of $v_1$ in $\mathbb C(z) \frac{\partial}{\partial z}$ and
    $b(z) \frac{\partial}{\partial z}$ be the image of $v_2$. Therefore
    \[
        [a(z)  \frac{\partial}{\partial z},b(z)  \frac{\partial}{\partial z}]= a(z)  \frac{\partial}{\partial z} \implies ab' -ba' = a  \, .
    \]
    If $\varphi : \mathbb P^1 \to \mathbb P^1$ is the rational map $\varphi(z) = -b(z)/a(z)$, then the above equation implies
      \[
         \varphi^*  \frac{\partial}{\partial z} = -\frac{1}{(b/a)' (z)}  \frac{\partial}{\partial z} = -\frac{a^2}{(a'b-ab')}  \frac{\partial}{\partial z} = a  \frac{\partial}{\partial z} \, .
    \]
    Similarly $\varphi^*  z \frac{\partial}{\partial z}= b(z)  \frac{\partial}{\partial z}$.  If we consider on $\mathbb P^1\times M$
    the foliation $\mathcal R$ generated by $\hat{\mathcal T} = (\pi^* \mathcal T )\cap \mathcal H$, $w_1 = \frac{\partial}{\partial z} + v_1$
    and $w_2 = z\frac{\partial}{\partial z} + v_2$, then $\mathcal G = (\varphi \times \mathrm{id}_M)^* \mathcal R$. Notice that the foliation
    $\mathcal R$ is a Riccati foliation on $\mathbb P^1 \times M$ with the section at infinity invariant, and thus it is tranversely affine.

    If the codimension of $\mathcal T$ is equal to three, then  there exists
    a basis $v_1,v_2,v_3$ of $\mathfrak v$ satisfying $[v_1,v_2]=v_1$, $[v_1,v_3]=2v_2$, and  $[v_2,v_3]=v_3$. We write, analogously to the previous case,
    the images of $v_1, v_2,$ and $v_3$ in $\mathbb C(z) \frac{\partial}{\partial z}$ as $a(z)  \frac{\partial}{\partial z},  b(z) \frac{\partial}{\partial z},$
    and $c(z) \frac{\partial}{\partial z}$ respectively. We still get the identity $ab'-ba'=a$ and also get the identities $bc'-b'c = 2b$ and
    $bc' - b'c =c$. If we  consider the rational map $\varphi(z) = -b(z)/a(z)$,
    then we have that $ \varphi^*  \frac{\partial}{\partial z}  = a(z) \frac{\partial}{\partial z}$,  $ \varphi^*  z\frac{\partial}{\partial z}  = b(z) \frac{\partial}{\partial z}$,
    and  $ \varphi^*  z^2\frac{\partial}{\partial z}  = c(z) \frac{\partial}{\partial z}$. As before, we  obtain  the existence of a Riccati foliation $\mathcal R$ on $\mathbb P^1 \times M$
    with tangent sheaf generated by $\hat{\mathcal T} = (\pi^* \mathcal T )\cap \mathcal H$, $v_1 +\frac{\partial}{\partial z}$, $v_2 + z\frac{\partial}{\partial z}$, and $v_3 + z^2 \frac{\partial}{\partial z}$ such that
    $\mathcal G =   (\varphi \times \mathrm{id}_M)^* \mathcal R$. It follows that $\mathcal G$ is transversely projective.
\end{proof}

\subsection{Another uniruledness criterion}
The next two results hold for foliations of any codimension.

\begin{prop}\label{P:subfol}
	Let $[g] \in M  \subset \Mor(\mathbb P^1,X)$ be a general free morphism with $g(\mathbb P^1)$  not everywhere tangent to $\F$, and let $k$ be the number of
	summands of $g^*  T \F$ having strictly positive degree. If $x \in X$ is a general point, then
	there exists a quasi-projective variety $V_x$ of dimension at least $k$ passing through $x$ and contained in the
	leaf of $\F$ through $x$.
\end{prop}
\begin{proof}
	 If $L$ is the
	leaf of $\F_{tang}$ through $[g]$ and $\varphi  :  \mathbb P^1 \times L   \to    X$
	is the restriction to $\mathbb P^1 \times L$ of the evaluation  morphism,  then the differential
	\[
	d \varphi : T \mathbb P^1 \times T L \longrightarrow \varphi^* (  TX)
	\]
	of $\varphi$ at a point $(z,[h]) \in \mathbb P^1 \times L $ is given by
	\[
	d \varphi(z,[h]) =   dh(z) + \phi(z,[h])
	\]
	where $\phi(z,[h]) : H^0(\mathbb P^1, h^* T\F) \to h^*TX \otimes \mathcal O_X / \mathfrak m_z \mathcal O_X$ is the evaluation morphism,
	and $dh$ is the differential of $h$, see \cite[page 114]{kollar}.
	If $z_0 \in \mathbb P^1$ and $[g]$  are general enough, then the kernel of $d \varphi(z_0,[g])$ has dimension at least  $h^0(\mathbb P^1,g^*T \F \otimes \mathfrak m_{z_0})$,
	where $\mathfrak m_{z_0} \subset \mathcal O_{\mathbb P^1,z_0}$ denotes the maximal ideal of the local ring of $\mathbb P^1$ at $z_0$.
    Since the pair $(z_0,[g])$ is general and $g$ has image generically transverse to $\F$, we have that the set $A= \varphi^{-1}( \varphi(z_0,[g]) \cap \{ z_0\} \times L$ also has dimension  $h^0(\mathbb P^1,g^*T \F \otimes \mathfrak m_{z_0})$. Let $T$ be the projection of $A\subset \mathbb P^1 \times L$ to $L$.  Thus, we have an
	analytic family of morphisms $T$  contained in  $M \subset \Mor(\P^1,X)$, all of them mapping $z_0$ to  $p$.
    The description of $d \varphi$ given above implies that  for a general $y \in \mathbb P^1$, $y\neq z_0$,  the image under $\varphi$ of $\{y\} \times T$
	has dimension at least $k$, the number of non-negative summands of $g^*T \F \otimes \mathfrak m_{z_0} \cong g^*T \F \otimes \mathcal O_{\mathbb P^1}(-1)$.
	
	Let $\overline T$ be the Zariski closure of $T$ in $M$, and let $M_{p}\subset M$ be the set morphisms in $M$ mapping $z_0$ to $p=g(z_0)$.
    Clearly, $M_p$ is a closed subset of $M$, and  $\overline T$ is contained in $M_p$.
	Assume  $p$ is a smooth point of $\F$. We claim that $\overline T$ is tangent to $\F_{tang}$. To verify this claim let $H : (X,g(z_0)) \to (\mathbb C^q,0)$ be a germ of submersion defining $\F$ at $g(z_0)$,
    and consider the sets $\Sigma_k \subset M_p$ consisting of morphisms $f \in M_p$ such that $f^* H = g^* H \mod \mathfrak m_{z_0}^k$. Clearly, $\Sigma_k$
	is a closed algebraic subset of $M_p$ and, by design, the irreducible components of the intersection $\cap_k \Sigma_k$  are  tangent to $\F_{tang}$.
    Since  at least one of these irreducible components contains $T$, the claim follows.
	
	If $F : \mathbb P^1 \times M  \to X$
	is  the  evaluation morphism,
	then for  a general $y \in \mathbb P^1$, the image $F( \{y\} \times \overline T)$  will be an algebraic  set of dimension at least $\dim F( \{y\} \times T) \ge  k$ contained in the leaf
	of $\F$ through $g(y)$. The proposition follows.
\end{proof}


\begin{prop}\label{P:bendandbreak}
	Let $M \subset \Mor(\P^1,X)$ be an irreducible component containing free morphisms, and $[g] \in M $ be a general element. Suppose  $g^*  T \F$
	has at least one summand having strictly positive degree. If $x \in X$ is a general point, then
	there exists a rational curve through $x$, and contained in the
	leaf of $\F$ through $x$.
\end{prop}
\begin{proof}
	The proof  is similar to the one of \cite[Lemma 5.2, Lecture I]{MiPe}.
	If $g(\P^1)$ is tangent to $\F$, then there is nothing to prove. Otherwise, according to Proposition \ref{P:subfol},
	the existence of a positive summand in the decomposition of $g^* T\F$
	implies that we can algebraically  deform $g$ along $\F$ in such a way that a general  point $z_0 \in \P^1$
	is mapped to $g(z_0)=x$ along the deformation.
	More precisely, there exists a smooth quasiprojective curve $C^0 \subset M$ contained
	in a leaf of $\F_{tang}$, and  such that  every $[h] \in C^0$ maps $z_0$ to $x$.
	
	Let $C$ be a smooth projective curve containing $C^0$ as an open subset. The evaluation morphism
	$F: \P^1 \times C^0 \to X$ extends to a rational map $F : \P^1 \times C \dashrightarrow X$. Generically
	$F$ must have rank two, as otherwise the deformation would have to move points along the image $g(\P^1)$ of
	one of its member, and this is only possible if $g(\P^1)$ is tangent to $\F$. Notice also that
	$F^* \F$ is nothing but the foliation on $\P^1 \times C $ defined by the projection $\P^1 \times C
	\to \P^1$.
	
	Since the curve $C_0= \{ p_0\} \times C$ has self-intersection zero in $\P^1 \times C$, and $F$ has image of dimension two, there must exists an indeterminacy point of $F$ on $C_0$. By resolving the indeterminacies
	of $F$, we obtain a surface $S$ together with a morphism $G : S \to X$ fitting into the diagram
	\[
	\xymatrix{
		S \ar[d]^{\pi} \ar[drr]^{G} \\
		\P^1 \times C \ar@{-->}[rr]^{F} & & X \\
		\P^1\times C^0 \ar@{^{(}->}[u] \ar[urr]^{F}
	}
	\]
	where $\pi : S \to \P^1 \times C$ is a birational morphism. Moreover, there exists a curve $E \subset S$ contracted by $\pi$ into a point of $C_0$, whose image under $G$ is a rational curve on $X$ passing through $x$. Since
	the foliation $F^* \F$ is a smooth foliation on $\P^1 \times C$, every exceptional divisor
	of $\pi$ is also invariant by $(F \circ \pi)^* \F = G^* \F$. Therefore, $G(E)$ is the sought rational
	curve tangent to $\F$ passing through $x$.
\end{proof}

\subsection{Proof of Theorem \ref{T:transproj}}
Let $\F$ be a codimension one foliation with numerically trivial canonical bundle on a uniruled manifold. Let $f : \mathbb P^1 \to X$ be a general free morphism belonging to a fixed irreducible component $M$ of $\Mor(\mathbb P^1,X)$. If $f(\mathbb P^1)$ is contained in a leaf of $\F$, then $\F$ is uniruled. But this contradicts Corollary \ref{C:uniruled}, since we are assuming $\F$ has canonical singularities.

We can therefore suppose that $f$ is generically transverse to $\F$. Since $K_\F$ is numerically trivial, we obtain that $f^* T\F$ is either trivial, or has a non-trivial positive summand. If $f^* T\F$  has a non trivial positive summand for any general free morphism, then Proposition \ref{P:bendandbreak} implies $\F$ is uniruled. As before we arrive at a contradiction with Corollary \ref{C:uniruled}.

If $f^*T\F$ is trivial, then the foliation $\F_{tang}$ defined on $M$ has dimension equal to $h^0(\mathbb P^1, f^* T\F) = \dim X-1$. Let $\overline L$
be the Zariski closure of a general leaf of $\F_{tang}$. Notice that the restriction of the evaluation morphism $F:\mathbb P^1 \times M \to X$  to $Y = \mathbb P^1 \times \overline L \subset \mathbb P^1 \times M$ dominates $X$.  If we consider the restriction of $F$ to $Y$, then we are in position to apply Theorem \ref{T:6.4} in order to deduce that $(F^*\F)_{|Y}$ is transversely projective. To conclude the proof, we
apply  Lemma \ref{L:empurra}. \qed


\section{Reduction to positive characteristic}\label{S:modp}
Let $\F$ be a foliation defined on a complex projective manifold $X$. The variety $X$ and the subsheaf $T \F \subset TX$ can be both
viewed as objects defined over a ring $R$ of characteristic zero finitely generated over $\mathbb Z$.  If
$\mathfrak p \subset R$ is  a  maximal ideal, then $R / \mathfrak p$ is  a finite field $k$ of characteristic $p>0$.  The reduction
modulo $\mathfrak p$ of $\F$ is the {\it foliation} $\F_{\p}$ determined by the  subsheaf $T\F_\p  = T\F \otimes_R k$ of the tangent sheaf of the projective variety
$X_\p = X \otimes_R k$. In simple terms, we are just reducing modulo $\mathfrak p$   the equations (which have coefficients in $R$) defining $X$ and   $\F$. For more
on the reduction modulo $p$ see \cite[Chapter 1, \S2.5]{MiPe}.

Here, we will use reduction modulo $\p$ to  find invariant hypersurfaces and integrating factors
for   complex foliations with semi-stable tangent sheaves and  numerically trivial canonical bundle. We will implicitly make use of the following
result.

\begin{prop}
	Let $\F$ be a foliation on a polarized projective manifold $(X,H)$ defined over a finitely generated $\Z$-algebra $R \subset \mathbb C$. If there are  integers
	$M,m$,   and a Zariski dense  set of maximal primes $\mathscr P \subset \mathrm{Spec}(R)$ such that $\F_\p$ has an invariant subvariety of dimension $m$ and  degree
	at most $M$ for every $\p \in \mathscr P$, then $\F$ has an invariant subvariety of dimension $m$ and degree at most $M$.
\end{prop}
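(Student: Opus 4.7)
The plan is to parametrize all $\F$-invariant subvarieties of dimension $m$ and $H$-degree at most $M$ as $R$-points of a projective $R$-scheme, and then use the Jacobson property of $R$ to pass from the closed points of $\mathrm{Spec}(R)$ to its generic point. Since there are only finitely many Hilbert polynomials (relative to $H$) whose associated subschemes have dimension $m$ and degree at most $M$, the disjoint union
\[
  \mathcal{H} \;=\; \coprod_{P} \Hilb_P(X/R),
\]
taken over these finitely many polynomials, is projective over $\mathrm{Spec}(R)$.

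The second step is to carve out the closed sublocus $\mathcal{Z}\subset\mathcal{H}$ parametrizing $\F$-invariant subschemes. If $\mathcal{V}\subset X\times_R\mathcal{H}$ denotes the universal family, tangency to $\F$ is expressed by the vanishing of a coherent sheaf map — for instance, the restriction $T\F|_{\mathcal V}\to TX|_{\mathcal V}\to N_{\mathcal V/X}$ must vanish on the open locus obtained by removing $\sing(\F)\times_R\mathcal H$. This is a closed condition on $\mathcal{H}$, so the structure morphism $\pi:\mathcal{Z}\to \mathrm{Spec}(R)$ is projective, and hence $\pi(\mathcal{Z})$ is a closed subset of $\mathrm{Spec}(R)$.

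Finally I would invoke the Jacobson property: by hypothesis $\pi(\mathcal{Z})$ contains the Zariski dense set $\mathscr P$ of maximal primes, and since $R$ is a finitely generated $\Z$-algebra, $\mathrm{Spec}(R)$ is a Jacobson scheme, so any closed subset containing a Zariski dense collection of closed points must equal $\mathrm{Spec}(R)$. In particular the generic point lies in $\pi(\mathcal{Z})$, giving a $\mathrm{Frac}(R)$-valued point of $\mathcal{Z}$; base-changing to $\C$ and selecting an $m$-dimensional irreducible component produces the $\F$-invariant subvariety of $X$ of dimension $m$ and degree at most $M$ demanded by the conclusion.

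The most delicate step is the second one: verifying that tangency to $\F$ cuts out a genuine closed subscheme of the relative Hilbert scheme has to be done with some care because of $\sing(\F)$, but it follows from the standard formalism of universal families together with the observation that tangency of an integral subscheme $V$ to $\F$ can be tested on any dense open subset of $V$ (so restricting to the open locus where $\F$ is regular does not lose any information). Once this is in place, the rest of the argument is purely a combination of properness of $\mathcal Z \to \mathrm{Spec}(R)$ with the Jacobson property.
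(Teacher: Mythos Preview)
Your proof is correct and follows essentially the same route as the paper: both use that $\F$-invariance cuts out a closed subscheme of the relative Hilbert scheme (the paper cites \cite{CP} for this), combine it with boundedness of Hilbert polynomials for reduced irreducible subvarieties of bounded degree, and then spread out from the dense set of closed points to the generic point. One small caveat: your claim that there are only finitely many Hilbert polynomials of dimension $m$ and degree $\le M$ is false for arbitrary subschemes, but true once you restrict to the reduced irreducible subvarieties furnished by the hypothesis --- this is exactly the point the paper makes in its last sentence.
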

\begin{proof}
	For a fixed Hilbert polynomial $\chi$, the subschemes of $X$ invariant by $\F$ with Hilbert polynomial
	$\chi$ form a closed subscheme $Hilb_{\chi}(X,\F)$ of $Hilb_{\chi}(X)$, see \cite[Proposition 2.1]{CP}. Moreover, its formation commutes with base
	change. Thus $Hilb_{\chi}(X,\F)$ is non-empty if, and only if, $Hilb_{\chi}(X_\p,\F_\p)$ is non-empty for a Zariski dense set of primes $\p$,
	see for instance \cite[Lecture I, Proposition 2.6]{MiPe}.
	To conclude, it suffices to remind that irreducible reduced subvarieties of $X_\p$ of
	bounded degree have bounded Hilbert polynomial, independently of $\p$.
\end{proof}

If $v$ is vector field on a smooth algebraic variety  of positive characteristic, then its $p$-th power is also a vector
field, since it satisfies Leibniz's rule:
\[
v^p(f \cdot g) = \sum_{i=0}^p \binom{p}{i} v^{i}(f) v^{p-i}(g) = f v^p(g) + v^p(f) g \mod p \, .
\]
A foliation $\F$ on a smooth algebraic variety  $X$ defined over a field of characteristic $p>0$ is said to be  $p$-closed
if, and only if, for every local section $v$ of $T\F$ its $p$-th power $v^p$ is also a local section of $T\F$.

The $p$-closed foliations of codimension $q$ are precisely those that can be defined by $q$ rational functions $f_1, \ldots, f_q$ in the sense
that $df_1 \wedge \ldots \wedge df_q$ is a non-zero rational section of $\det N^* \F$ seen as a subsheaf of $\Omega^q_X$. Indeed, if $\F$ is a $p$-closed
foliation of codimension  $q$, then \cite[Lecture III, 1.10]{MiPe}  implies that at a general point of $X$ there are local coordinates in which $\F$
is defined by $dx_1 \wedge \cdots \wedge dx_q$. Reciprocally, if $\F$ is defined $df_1 \wedge \ldots \wedge df_q \neq0$, then for every rational vector field $v$
satisfying $i_v df_1 \wedge \ldots \wedge df_q =0$,
we have that
\[
i_{v^p} (df_1 \wedge \ldots \wedge df_q) = \sum_{i=1}^q (-1)^{i+1} v^p(f_i) \cdot df_1 \wedge \cdots \wedge \widehat{df_i} \wedge \cdots \wedge df_q = 0 \, .
\]
This illustrates what is perhaps the
most astonishing contrast between foliations in  positive/zero characteristic: the easiness/toughness to decide whether or not $\F$ has first integrals.

If   $\F$ is a foliation on a projective manifold defined over a finitely generated $\Z$-algebra $R \subset \C$
then  the behavior of $X_\p$ and $\F_\p$ may vary wildly when  $\p$ varies among the maximal
primes of $R$. Thus,
in order to have some hope to read properties of $\F$ on its reductions
modulo $\p$, one has to discard the {\it bad primes}. When a foliation $\F$ on a complex projective manifold has  $\p$-closed reduction modulo $\p$ for every maximal prime ideal $\p$ lying in a nonempty  open subset $U \subset \mathrm{Spec}(R)$, then we will simply say that $\F$ is $p$-closed.

\subsection{Integrating factors in positive characteristic}
In this section, we collect some  results from  \cite[Section 6]{Croco2}
which will be essential in what follows.

\begin{lemma}\label{L:pos}
	Let $X$ be a  smooth affine variety of dimension $n$ defined over a field of arbitrary characteristic. If $\omega$
	is an integrable  $1$-form which is  non zero at a closed  point $x \in X$, then there exists $n-1$
	regular vector fields $v_1, \ldots, v_{n-1}$ at an affine  neighborhood of $x$  such that
	\begin{enumerate}
		\item $v_1 \wedge \cdots \wedge v_{n-1}(x) \neq 0$;
		\item $[v_i, v_j] = 0$ for every $i, j \in \{1, \ldots, n-1\}$;
		\item $i_{v_i}\omega = 0$ for every $i \in \{1, \ldots, n-1\}$.
	\end{enumerate}
\end{lemma}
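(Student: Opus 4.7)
The plan is to produce the $v_i$ in explicit étale coordinates and let the integrability condition $d\omega\wedge\omega=0$ force commutativity.

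First I would shrink $X$ to an affine open neighborhood $U$ of $x$ equipped with regular functions $x_1,\ldots,x_n\in\mathcal O(U)$ whose differentials $dx_1,\ldots,dx_n$ freely generate $\Omega^1_U$; such functions exist because $X$ is smooth of dimension $n$ at $x$, and the resulting morphism $U\to\mathbb A^n$ is étale. Let $\partial_1,\ldots,\partial_n$ denote the dual basis of regular derivations, which is well defined in arbitrary characteristic. Writing $\omega=\sum_{i=1}^n a_i\,dx_i$ and using $\omega(x)\neq 0$, after reordering coordinates and passing to the principal open where $a_n$ is invertible I would replace $\omega$ by $a_n^{-1}\omega$. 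This does not change $\ker\omega$ (all that is relevant, since the $v_i$ are required only to lie in $\ker\omega$) and preserves integrability, because $d(f\omega)\wedge(f\omega)=f^2\,d\omega\wedge\omega$ for any unit $f$. I may therefore assume $\omega = dx_n + \sum_{i=1}^{n-1} b_i\,dx_i$ with each $b_i\in\mathcal O(U)$.

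Next, I take as candidates the natural vector fields $v_i := \partial_i - b_i\,\partial_n$ for $1\le i\le n-1$. Conditions (1) and (3) are then both immediate: at $x$ the vectors $v_i(x)$ project to the standard basis in the $(x_1,\ldots,x_{n-1})$-directions and so are linearly independent, while $i_{v_i}\omega = b_i - b_i = 0$. For condition (2), a direct bracket computation that uses only $[\partial_k,\partial_\ell]=0$ and the Leibniz rule shows that $[v_i,v_j]$ is a multiple of $\partial_n$; write $[v_i,v_j] = c_{ij}\,\partial_n$ for some $c_{ij}\in\mathcal O(U)$.

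The concluding step is to extract $c_{ij}=0$ from integrability. The Cartan formula $d\omega(u,v) = u(\omega(v)) - v(\omega(u)) - \omega([u,v])$ and the shuffle identity
\[
(d\omega\wedge\omega)(u,v,w) \;=\; d\omega(u,v)\,\omega(w) \,-\, d\omega(u,w)\,\omega(v) \,+\, d\omega(v,w)\,\omega(u)
\]
are both valid in arbitrary characteristic. Applied with $u=v_i$, $v=v_j$, $w=\partial_n$, and using $\omega(v_i)=\omega(v_j)=0$ together with $\omega(\partial_n)=1$, the hypothesis $d\omega\wedge\omega=0$ yields $d\omega(v_i,v_j)=0$, and the Cartan formula then gives $\omega([v_i,v_j]) = 0$; since $[v_i,v_j] = c_{ij}\partial_n$, this forces $c_{ij}=0$. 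The hard part, such as it is, is purely book-keeping: one must check that the exterior-calculus identities being invoked are in their characteristic-free form, since the analytic Frobenius theorem is unavailable when $\mathrm{char}(k)>0$. Beyond that, the argument is a direct algebraic computation.
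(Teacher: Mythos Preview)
Your argument is correct. The paper does not actually prove this lemma; it simply cites \cite{Croco2}, Lemma~6.1. Your proof is the natural explicit construction: pass to \'etale coordinates, normalize so that $\omega = dx_n + \sum_{i<n} b_i\,dx_i$, take $v_i = \partial_i - b_i\,\partial_n$, and read off $[v_i,v_j]=0$ from $d\omega\wedge\omega=0$ via the Cartan and shuffle identities, both of which are characteristic-free. This is almost certainly the argument behind the cited reference as well; the only thing to be mildly careful about (and you flagged it) is that the normalization by $a_n^{-1}$ leaves $\ker\omega$ and the integrability condition unchanged, so conditions (1)--(3) transfer back to the original $\omega$.
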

\begin{proof}
    This is lemma 6.1 from \cite{Croco2}.
\end{proof}

The underlying idea of the proof of  the next result is that $p$-th powers of vector fields
tangent to a integrable $1$-form give rise to infinitesimal automorphisms, and these allow us to find integrating factors.
The proof presented below is borrowed from  \cite[proof of Theorem 6.2]{Croco2}. We have chosen
to present it here, since this result is pivotal  in the proof of Theorem \ref{T:modulop}.

\begin{prop}\label{P:invp}
	Let $X$ be a smooth  variety defined over a field $k$
	of characteristic $p >0$, and $\omega$ be a rational $1$-form on $X$.
	If $\omega$ is integrable and there exists a rational vector field  $\xi$ such that
	\begin{enumerate}
		\item $i_{\xi} \omega = 0$; and
		\item $F=\omega(\xi^p) \neq 0$
	\end{enumerate}
	then the $1$-form $F^{-1}\cdot{\omega}$ is closed.
\end{prop}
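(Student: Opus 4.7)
The goal is to show $d(F^{-1}\omega)=0$. Working on a dense open set where $\omega$ is regular and nonvanishing, the integrability condition $\omega\wedge d\omega=0$ lets one write $d\omega=\theta\wedge\omega$ for a rational 1-form $\theta$. A direct computation gives
\[
d(F^{-1}\omega)\;=\;F^{-1}d\omega-F^{-2}dF\wedge\omega\;=\;F^{-2}(F\theta-dF)\wedge\omega,
\]
so the whole statement reduces to the assertion that $F\theta-dF$ is an $\mathcal O$-multiple of $\omega$.

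The mechanism to prove this is the Lie derivative along $\xi^p$. First, since $i_\xi\omega=0$, Cartan's formula together with $d\omega=\theta\wedge\omega$ yields $L_\xi\omega=i_\xi d\omega=\theta(\xi)\,\omega$, so $L_\xi$ carries the $\mathcal O$-submodule $\mathcal O\cdot\omega\subset\Omega^1$ into itself: indeed $L_\xi(h\omega)=\bigl(\xi(h)+h\theta(\xi)\bigr)\omega$. Iterating this observation $p$ times shows that $(L_\xi)^p\omega$ is still a multiple of $\omega$.

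Second, I would invoke (or verify by checking on functions and exact 1-forms and extending by the twisted Leibniz rule $(L_V)^p(f\alpha)=V^p(f)\alpha+f(L_V)^p\alpha$ valid in characteristic $p$) the identity
\[
L_{\xi^p}\;=\;(L_\xi)^p
\]
as operators on $\Omega^1$. Combining the two previous steps, $L_{\xi^p}\omega$ is a multiple of $\omega$. On the other hand Cartan's formula applied to $\xi^p$ and $d\omega=\theta\wedge\omega$ gives
\[
L_{\xi^p}\omega\;=\;d(i_{\xi^p}\omega)+i_{\xi^p}d\omega\;=\;dF+\theta(\xi^p)\omega-F\theta.
\]
Comparing, $dF-F\theta\equiv 0\pmod{\omega}$, which is exactly what was needed to conclude $d(F^{-1}\omega)=0$.

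The only delicate point is justifying $L_{\xi^p}=(L_\xi)^p$ on 1-forms in positive characteristic; this is where the characteristic-$p$ binomial coefficients kill the cross terms in Leibniz's rule, so it reduces to checking agreement on functions (where it is tautological) and on exact forms (where it follows since $L$ commutes with $d$). Everything else is formal manipulation with Cartan's magic formula and the local decomposition $d\omega=\theta\wedge\omega$ coming from integrability. Passage from the local statement to the global one for rational forms is harmless since $d(F^{-1}\omega)$ is a rational form that vanishes on a Zariski dense open set.
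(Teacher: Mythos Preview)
Your argument is correct and takes a genuinely different route from the paper's. The paper invokes the preceding Lemma~\ref{L:pos} to produce $n-1$ commuting rational vector fields $v_1,\ldots,v_{n-1}$ annihilated by $\omega$, rewrites $\xi=\sum a_j v_j$, and then uses Jacobson's formula to replace $\xi^p$ by $\zeta=\sum a_j^p v_j^p$ (which still satisfies $\omega(\zeta)=F$ but now commutes with every $v_j$). With $\alpha=\omega/\omega(\zeta)$ the identity $i_\zeta(\alpha\wedge d\alpha)=0$ reduces closedness of $\alpha$ to $i_\zeta d\alpha=0$, and this is checked by pairing against the frame $v_1,\ldots,v_{n-1},\zeta$ using the commutation relations.

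Your approach bypasses the commuting frame entirely: you observe that $L_\xi$ preserves the line $\mathcal O\cdot\omega$, hence so does $(L_\xi)^p$, and then use the characteristic-$p$ identity $L_{\xi^p}=(L_\xi)^p$ on $\Omega^1$ (which follows, as you say, from the vanishing of the interior binomial coefficients and the fact that a degree-zero derivation of $\Omega^\bullet$ commuting with $d$ is determined by its action on functions). This is more self-contained---it does not require Lemma~\ref{L:pos} or Jacobson's formula---and makes transparent that the only positive-characteristic input is the Frobenius-type identity for Lie derivatives. The paper's route, on the other hand, sets up the commuting frame $v_1,\ldots,v_{n-1}$ that is reused in the subsequent proof of Theorem~\ref{T:avefred}, so in context the apparent detour is not wasted.
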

\begin{proof}
	Let $n$ be the dimension of $X$ and $v_1, \ldots, v_{n-1}$ be the rational vector fields
	given by Lemma \ref{L:pos}.
	Thus $\xi = \sum_{j=1}^{n-1} a_{ij} v_j$ for suitable rational functions $a_{ij}$.
	By a formula of Jacobson \cite[page 187]{Jacobson}, we can write
	$
	\xi^p = \sum_{j=1}^{n-1} a_{ij}^p v_j^p + P(a_{i,1}v_1,\ldots, a_{i,n-1}v_{n-1})
	$
	with $P$ being a Lie polynomial. Since $[v_i,v_j]=0$ it follows that
	\[
	\xi^p = \sum_{j=1}^{n-1} a_{ij}^p v_j^p \, , \mod < v_1, \ldots, v_{n-1} > \, .
	\]
	As we are interested in contracting $\xi^p$ with $\omega$, we will replace $\xi^p $ by   $\zeta= \sum_{j=1}^{n-1} a_{ij}^p v_j^p$. Notice that
	$[\zeta,v_j] = 0$   for  $j \in \{ 1, \ldots, n-1\}$.
	
	Set $\alpha = \frac{\omega}{\omega(\zeta)}$.  The integrability of $\omega$  together with $ \alpha(\xi^p)= \alpha(\zeta)=1$ implies
	\[
	0 = i_{\zeta} (\alpha \wedge d \alpha) =   d \alpha - \alpha \wedge i_{\zeta} d \alpha \, .
	\]
	Hence to prove that  $\alpha$ is closed, it suffices to verify that the $1$-form $i_{\zeta} d \alpha$ is zero.
	As the vector fields $v_1, \ldots, v_{n-1}, \zeta$ commute, then for every vector field $v$ in the previous list we have
	\[
	(i_{\zeta} d \alpha) (v) =  \alpha([\zeta,v])  - \zeta(  \alpha(v) ) + v(\alpha(\zeta) ) = 0 \, .
	\]
	This ensures that $i_\zeta d \alpha=0$, and consequently $d \alpha = 0$. The proposition follows.
\end{proof}

\begin{cor}\label{C:colap}
	Continuation of Proposition \ref{P:invp}:
	if $\tilde{\xi }$ is
	another  rational vector field satisfying (1) and (2), then the rational functions $F=\omega(\xi^p)$
	and $\tilde F=\omega(\tilde{\xi}^p)$ differ by the multiplication of a $p$-th power of a rational function, i.e.,
	$F = H^p \tilde F$, for some rational function $H$. In particular, the identity $\frac{dF}{F} = \frac{d \tilde F}{ \tilde F}$ holds true.
\end{cor}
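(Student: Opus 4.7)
My plan is to leverage Proposition \ref{P:invp} twice -- once for $\xi$ and once for $\tilde\xi$ -- to obtain two closed rational $1$-forms $\alpha:=F^{-1}\omega$ and $\tilde\alpha:=\tilde F^{-1}\omega$ defining the same foliation. Comparing the two closedness conditions should force $d\log(F/\tilde F)$ to be proportional to $\omega$, and then contracting with $\xi^p$ (using the hypothesis $i_\xi\omega=0$) should kill the proportionality factor. The last step will be to pass from a function with vanishing differential to a $p$-th power of a rational function, using that the base field is perfect.

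Concretely, $d(F^{-1}\omega)=0$ gives $F\,d\omega=dF\wedge\omega$ and, likewise, $\tilde F\,d\omega=d\tilde F\wedge\omega$. Eliminating $d\omega$ and dividing through, I get $\bigl(\tfrac{dF}{F}-\tfrac{d\tilde F}{\tilde F}\bigr)\wedge\omega=0$. Setting $u:=F/\tilde F$, this reads $du\wedge\omega=0$, so $du=h\,\omega$ for some rational function $h$. Contracting with $\xi$ and using $i_\xi\omega=0$ yields $\xi(u)=0$; since $\xi^p$ is the $p$-fold composition of the derivation $\xi$, iterating gives $\xi^p(u)=0$. On the other hand, contracting $du=h\,\omega$ with $\xi^p$ directly gives $\xi^p(u)=h\cdot\omega(\xi^p)=hF$. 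Comparing the two expressions yields $hF=0$, and since $F\ne 0$ we conclude $h=0$, that is, $du=0$.

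To finish, I would use that in our setting the base field $k=R/\mathfrak p$ is a finite field, hence perfect; consequently the kernel of the universal derivation $d\colon k(X)\to\Omega^1_{k(X)/k}$ is exactly $k(X)^p$, so $du=0$ forces $u=H^p$ for some $H\in k(X)$. This is precisely the claim $F=H^p\tilde F$. The identity $dF/F=d\tilde F/\tilde F$ is then immediate, since $dF=d(H^p\tilde F)=pH^{p-1}dH\cdot\tilde F+H^p\,d\tilde F=H^p\,d\tilde F$ in characteristic $p$.

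The step I expect to be the main technical subtlety, and the one that distinguishes this corollary from its characteristic zero analogue, is the passage from $du=0$ to $u$ being a $p$-th power in the function field. This is where perfectness of the residue field is essential; without it one would only get $u\in k\cdot k(X)^p$. All other ingredients are formal Cartan-calculus manipulations valid in any characteristic, once the two closed forms $F^{-1}\omega$ and $\tilde F^{-1}\omega$ are in hand.
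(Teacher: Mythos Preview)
Your argument is correct and, at its core, coincides with the paper's: both start from the closedness of $F^{-1}\omega$ and $\tilde F^{-1}\omega$ given by Proposition~\ref{P:invp}, deduce $du\wedge\omega=0$ for $u=F/\tilde F$, and then show $du=0$. The paper phrases this last step conceptually---``since the foliation defined by $\omega$ is not $p$-closed, $du=0$''---whereas you unroll that implication by hand: contracting $du=h\omega$ with $\xi$ gives $\xi(u)=0$, hence $\xi^p(u)=0$, while contracting with $\xi^p$ gives $\xi^p(u)=hF$, so $h=0$. These are the same argument, yours being the explicit computation underlying the paper's one-line appeal to non-$p$-closedness.

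One remark on your final paragraph: you are right that passing from $du=0$ to $u\in k(X)^p$ uses perfectness of $k$, a point the paper does not make explicit (it simply asserts $F=H^p\tilde F$). As you note, the statement of Proposition~\ref{P:invp} only assumes $\mathrm{char}\,k=p>0$, so strictly speaking the corollary as stated needs $k$ perfect; in the paper's application $k=R/\mathfrak p$ is finite, so this is harmless. In any case the consequence actually used downstream is $dF/F=d\tilde F/\tilde F$, which follows directly from $du=0$ without any hypothesis on $k$.
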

\begin{proof}
	According to Proposition \ref{P:invp}, both   $F^{-1} \omega$ and $\tilde{F}^{-1} \omega$ are closed $1$-forms. Therefore
	$d ( F^{-1} \tilde{F}) \wedge \omega = 0$. Since the foliation defined by $\omega$ is not $p$-closed, it follows
	that  $d ( F^{-1} \tilde{F})=0$. Hence  $F = H^p  \tilde F$ for a suitable rational function $H$.
\end{proof}

\subsection{Lifting integrating factors}
We will now proceed to prove the main result of Section \ref{S:modp}.
\begin{thm}\label{T:modulop} Let $(X,H)$ be a polarized projective complex manifold, and $\F$ be a semi-stable foliation of codimension one on $X$.
	If $\KF \cdot H^{n-1} =0$, then at least one of the following assertions holds true
	\begin{enumerate}
		\item the foliation  $\F$  is $p$-closed;
		\item  $\F$ is induced by a closed rational $1$-form with coefficients in a flat line bundle, without divisorial components
		in its zero set.
	\end{enumerate}
\end{thm}
\begin{proof}
	Let $R \subset \mathbb C$ be a finitely generated $\mathbb Z$-algebra such that everything in sight is defined over it.
    Suppose that the set  of maximal primes $\mathcal P \subset \mathrm{Spec}(R)$ for which $\F_\p$ -- the reduction
    modulo $\p$ of $\F$  -- is not $p$-closed is Zariski dense, and fix $\p\in\mathcal P$.
	
	To  raise germs of vector fields $v$ in $T \F_\p$ to theirs $p$-th powers  provides  a non-zero global section  $S_\p$ of
	$$\Hom_{{\mathcal O}_X}(F^*T\F_\p,   N\F_\p)={(F^*T\F_\p)}^* \otimes  N\F_\p$$
	where $F$ is the absolute Frobenius.
	
	Let us explicitly describe $S_\p$ at
	sufficiently small Zariski open subsets $U_i$ disjoint from the singular set of $\F_\p$.
	Let  $v_{1,i},....,v_{n-1,i}$ be the $n-1$ vector fields satisfying the conclusion of  Lemma \ref{L:pos}, and
	such that $v_{1,i}\wedge...\wedge v_{n-1,i}$ does not vanish on $U_i$.
	We can also assume that $\F_\p$ is defined on the same domain by a $1$-form $\omega_i$ without
	divisorial components in its singular set. Take another open set $U_j$ with the same properties. On overlapping charts, we have
	$$\begin{pmatrix}v_{1,i}\\
	\vdots\\
	v_{n-1,i}\end{pmatrix}=M_{ij}\begin{pmatrix}v_{1,j}\\
	\vdots\\
	v_{n-1,j}\end{pmatrix}$$
	where the matrix cocycle $\{ M_{ij} \}$ represents the cotangent bundle $T^*{\F_\p}$ of the foliation outside $\mathrm{sing}(\F_\p)$.
	
	As a consequence, using Jacobson's formula \cite{Jacobson}, we obtain
	\[
        \begin{pmatrix}
	       {v_{1,i}^p}\\
    	   \vdots\\
    	   v_{n-1,i}^p
        \end{pmatrix} = N_{ij}
        \begin{pmatrix}
            v_{1,j}^p\\
            \vdots\\
            v_{n-1,j}^p
        \end{pmatrix} \mod\ T \F_\p
    \]
	where  the matrix $N_{ij}$ is obtained from $M_{ij}$ by replacing each entry by its $p^{th}$ power. If we set
	$s_{k,i}=\omega_i(v_{k,i}^p)$, then we gain the following equality
	\[
        \begin{pmatrix}
            {s_{1,i}}\\
    	    \vdots\\
    	     {s_{n-1,i}}\end{pmatrix}=g_{ij}N_{ij}\begin{pmatrix}{s_{1,j}}\\
        	\vdots\\
        	{s_{n-1,j}}
        \end{pmatrix}
    \]
	where $g_{ij}$ is the cocycle representing the normal bundle of $\F_\p$.
    The collection of vectors $\{ (s_{1,i} s_{2,i} \ldots s_{n-1,i})^T \}$ represents $S_\p$
	on $X_\p - \sing(\F_\p)$.


	Let $D_\p$ be the zero divisor of the section $S_\p$. Over $U_i$, $D_\p$ is defined by the codimension one  components of the common zeros of $s_{1,i}, \ldots, s_{n-1,i}$.
	Since   $\F_\p$ is not $p$-closed, there is at least one among these functions which does not vanish identically.
    Choose one for each open subset $U_i$  and denote it by $s_i$. Corollary \ref{C:colap} guarantees that the zero divisor of two different
    choices will differ by an element in $p \cdot \mathrm{Div}({U_i})$. It also implies that over nonempty intersections
    $U_i \cap U_j$, we have $s_i = g_{ij} h_{ij}^p s_j$ for some rational function $h_{ij}$ in $U_i \cap U_j$. Therefore the
    rational $1$-forms $\frac{ds_i}{s_i}$ do not depend on the choices of the rational functions $s_i$ and they satisfy
	\[
        \frac{ds_{i}}{s_{i}} -  \frac{ds_{j}}{s_{j}}= \frac{dg_{ij}} {g_{ij}}\,  .
    \]
	Notice also that the polar set of $ds_i/s_i$ coincides with the irreducible components of ${D_{\p}}_{|U_i}$ which have multiplicity
	relatively prime to $p$.
		
	If we write $g_{ij}= g_i/g_j$ as a quotient of rational functions on $X$, then we can define on $X_\p$  a closed rational $1$-form
	with simple poles $\eta_\p$ by setting
	\[
	   (\eta_{\p})_{|U_i} =  \frac{ds_i}{s_i} - \frac{dg_i}{g_i}  \,
	\]
	where we still denote by $g_i$ the reduction modulo $\p$ of the rational functions $g_i$.
		
	If $C_\p$ is  an  irreducible curve on $X_\p$ not contained in the polar set of $\eta_\p$, then
	the restriction of $\eta_{\p}$ to $C_{\p}$ is  a rational $1$-form with  sum
	of residues equal to $(D_{\p} - N\F_\p) \cdot C_\p  \mod p$. The residue formula implies the equality
	\begin{equation}\label{E:nfmodp}
		D_\p  \cdot C_\p =  N \F_\p \cdot C_\p \mod p \, \, .
	\end{equation}
		
	If we set $\omega = \omega_i/g_i$, then $\omega$ is a well-defined rational $1$-form on $X$. Moreover, Proposition \ref{P:invp} implies that the identity
	$
	   d\omega = \eta_\p\wedge\omega
	$
	holds true on $X_\p$. If we were in characteristic zero, then the closed $1$-form $\eta_\p$ would be the sought integrating factor.
	
	Up to this point, we have not used the hypothesis on $\KF$.
	In order to explore it and  obtain further restrictions on  $D_\p$, we will use  the following result by Shepherd-Barron, \cite[Corollary $2^p$]{SB} and \cite{kol}.
	
	\begin{lemma} \label{L:sb} (char $p$)
        Suppose that $\mathcal E$ is a semi-stable vector bundle of rank $r$ over a curve $C$ of genus $g$. Consider $F^* \mathcal E= \tilde {\mathcal E}$, the pull-back of $\mathcal E$ under the absolute Frobenius. Then there exists $M=M(r,g)>0$ independent of p such that
		\[
             \mu_{max}(\tilde{\mathcal E})- \mu_{min}(\tilde{\mathcal E})\leq M.
        \]
	\end{lemma}
	
	Now, return to the original foliation $\F$ on the complex manifold $X$. Consider a general complete intersection curve $C$
	cut out by elements of $|mH|$ ( $m \gg 0$ ) for which the  $T \F_{|C}$ is semi-stable. Notice that this semi-stability is preserved under specialization $\mod \p$ for almost every $\p$.
	
	Restricting $S_\p$ to $C_\p$ and cleaning up its zero divisor, we get a section of
	$$\Hom_{{\mathcal O}_{C_\p}}({F^*\F_\p}_{|C_\p},{N\F_\p}_{|C_\p}\otimes\mathcal O_{C_\p}(-D_\p)).$$
	Since $\Hom_{{\mathcal O}_X}(\mathcal A,\mathcal B)=0$ whenever $\mu_{min}\mathcal A>\mu_{max}\mathcal B$, we deduce that
	\[
    	\mu_{min}( F^*{\F_\p}_{|{C_\p}})  \le   N{\F_\p} \cdot {C_\p}  - D_\p \cdot C_\p\, .
	\]
	Lemma \ref{L:sb} and the fact that $\mu_{max}({F^*\F_{|C_\p}})\ge0$ imply
	\[
	   D_\p \cdot C_\p  \le M +  N\F_\p \cdot C_\p
	\]
	with $M$ uniform in $\p$. For $p \gg0$,    this last inequality combined with (\ref{E:nfmodp}) implies    $D_\p \cdot C_\p = N \F_{\p} \cdot C_{\p} = N\F \cdot C$.
    Consequently,  the degree of $D_\p$ is uniformly bounded. In particular,   the polar locus of $\eta_\p + \frac{dg_i}{g_i}$ on $U_i$ coincides with the support of  ${D_{\p}}_{|U_i}$ and its residues are positive integers uniformly bounded with respect to $p$ (indeed, they coincide with the multiplicity of irreducible components of $D_\p$).
    Thus, there exist on $X$ 
	a closed rational  $1$-form $\eta$ with simple poles such that 
	\[
    	d \omega =  \eta \wedge\omega, .
	\]
 and such that the residues of $ \eta + \frac{dg_i}{g_i}$ lie in $\mathbb Z_{>0}$. 
 We conclude that  the $1$-form
	\[
	   \frac{ \omega} { \exp \int \eta }
	\]
	is a closed rational $1$-form on $X$ with zero set of codimension at least two  and coefficients in a flat line bundle defining
	the foliation $\F$.
\end{proof}

\subsection{Singularities of  $p$-closed foliations}
McQuillan  observed in \cite[Proposition II.1.3]{McQ} that  isolated singularities of $p$-closed foliations of dimension one  with non-nilpotent linear
part are fairly special.

\begin{lemma}\label{L:singcharp}
	Let $\F$ be a $p$-closed foliation by curves on a projective manifold $X$. If $x \in \mathrm{sing}(\F)$ is an
	isolated singularity with non-nilpotent linear part, then there exist formal  coordinates at $x$ where  $\F$
	is generated by  the linear vector field
	\[
	v = \sum_{i=1}^n \lambda_i x_i \frac{\partial} {\partial x_i}
	\]
	where $\lambda_1, \ldots, \lambda_n$ are non-zero integers.
\end{lemma}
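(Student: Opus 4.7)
My plan is to realize the foliation near $x$ as the infinitesimal orbit structure of a formal $\mathbb{G}_m$-action with isolated fixed point, and then linearize that action by decomposing the completed local ring into $\mathbb{F}_p$-weight spaces.

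First I would work in $\hat{\mathcal O}_{X,x}$: pick a local generator $\tilde v$ of $T\mathcal F$ vanishing at $x$, so the $p$-closedness yields $\tilde v^p = f \tilde v$ for some $f \in \hat{\mathcal O}_{X,x}$. Passing to the induced action on $m_x / m_x^2$, the linear part $A$ of $\tilde v$ satisfies $A^p = f(0)\,A$. Non-nilpotence of $A$ forces $c := f(0) \ne 0$, and the polynomial $X^p - c X$ is separable in characteristic $p$ (its derivative $-c$ is a nonzero constant); its roots are $0$ together with the $p-1$ distinct solutions of $X^{p-1} = c$ in $\bar k$, which form a coset of $\mathbb F_p^*$. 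Hence $A$ is diagonalizable over $\bar k$ with eigenvalues in $\{0\} \cup c^{1/(p-1)} \mathbb F_p^*$. After passing to $\bar k$, Hensel's lemma in $\hat{\mathcal O}_{X,x}$ produces a unit $\mu$ with $\mu^{p-1} = f$, and I would replace $\tilde v$ by $\tilde v/\mu$ (which generates the same foliation) and then rescale by a formal unit $g \in 1 + \hat m_x$ via Jacobson's formula $(gD)^p = g^p D^p + (gD)^{p-1}(g)\,D$ to arrive at a derivation $v$ generating $T\mathcal F$ with $v^p = v$ exactly.

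This last normalization is where I expect the main difficulty: the equation $g^p h + (gv)^{p-1}(g) = g$ that one must solve (where $h \in 1 + \hat m_x$ is the correction unit produced by the constant rescaling) is not manifestly contractive in the $m_x$-adic topology, because applying $(gv)^{p-1}$ does not raise the order of $g$. I would either bookkeep orders carefully — using that the obstructive piece of the Jacobson correction has a controlled weight and can be killed by projecting into the non-resonant eigenspaces of the linearized operator — or bypass the exact normalization by invoking the linearization theorem for formal subgroup schemes of multiplicative type, as implicit in McQuillan's treatment. Once $v^p = v$ is secured, the polynomial identity $X^p - X = \prod_{j \in \mathbb F_p}(X-j)$ splits with distinct roots in $\mathbb F_p$, so the Lagrange projectors $\pi_j := \prod_{i \ne j}(v-i)(j-i)^{-1}$ furnish an eigenspace decomposition $\hat{\mathcal O}_{X,x} = \bigoplus_{j \in \mathbb F_p} (\hat{\mathcal O}_{X,x})_j$, a formal $\mathbb G_m$-grading. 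Lifting an eigenbasis of $A$ on $m_x/m_x^2$ to $v$-eigenvectors in $m_x$ via these projectors gives a regular system of parameters $y_1, \ldots, y_n$ with $v(y_i) = \lambda_i y_i$, $\lambda_i \in \mathbb F_p$; since $\hat{\mathcal O}_{X,x} = k[[y_1, \ldots, y_n]]$ and a derivation is determined by its values on a regular system of parameters, $v = \sum_i \lambda_i y_i\, \partial/\partial y_i$. Isolation of $\sing(\mathcal F)$ at $x$ translates into isolation of the fixed locus of this $\mathbb G_m$-action, which forces every $\lambda_i \ne 0$; lifting each $\lambda_i \in \mathbb F_p^*$ to its integer representative in $\{1, \ldots, p-1\}$ produces the non-zero integer eigenvalues claimed.
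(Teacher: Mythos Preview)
You have misread the setting of the lemma. In this paper, $X$ is a \emph{complex} projective manifold and ``$p$-closed'' is the notion introduced just before \S\ref{S:folcharp}: the reduction of $\mathcal F$ modulo $\mathfrak p$ is closed under $p$-th powers for almost every maximal prime $\mathfrak p$ of a finitely generated $\mathbb Z$-algebra of definition. The formal coordinates and the integer eigenvalues in the conclusion live in characteristic zero, not in $\mathbb F_p$. Your entire argument takes place inside a single positive-characteristic local ring, and the final step --- lifting $\lambda_i\in\mathbb F_p^*$ to its representative in $\{1,\ldots,p-1\}$ --- does not produce formal coordinates on the complex manifold nor does it show that the characteristic-zero eigenvalues are integers.

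The paper's proof is structured quite differently. One first invokes McQuillan's classification to reduce to the case where the linear part is diagonalizable with eigenvalue ratios in $\mathbb Q_{>0}$; after clearing denominators the semisimple part $v_S$ has eigenvalues $m_1,\ldots,m_n\in\mathbb Z_{>0}$. The substance of the argument is then to rule out a nontrivial nilpotent part $v_N$ in the formal Jordan--Chevalley decomposition. This is done by reducing the algebraic data modulo a large prime $\mathfrak p$ and computing $v^p$ via Jacobson's formula: one finds $v\wedge v^p \equiv v_N^{(k)}\wedge v_S \bmod \mathfrak m_x^{k+2}$, and $p$-closedness forces $v_N^{(k)}=P\cdot v_S$ with $v_S(P)=0$. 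Since the $m_i$ are genuinely positive integers, $v_S(P)=0$ is impossible for $p$ large, so $v_N=0$ in characteristic zero. Your $\mathbb G_m$-linearization, even if the delicate normalization to $v^p=v$ could be completed, would at best linearize each reduction $\mathcal F_{\mathfrak p}$ separately; nothing in your outline bridges back to a formal linearization over $\mathbb C$.
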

\begin{proof} In the terminology of \cite{McQ}, we have that an isolated singularity is log canonical if, and only if, its linear part is not nilpotent \cite[Fact I.1.8]{McQ}.
	The only case in our statement  not covered by \cite[Proposition II.1.3]{McQ} is when the singularity is log canonical but not canonical.
	According to \cite[Fact I.1.9]{McQ}, this implies that the vector field is linearizable  and all its quotients of eigenvalues are positive rational numbers.
\end{proof}

\begin{cor}\label{C:prime}
Let $\F$ be a foliation on a projective manifold $X$ with numerically trivial canonical bundle.
If the singularities of $\F$ are canonical, then one of the following assertions hold true.
\begin{enumerate}
\item The foliation $\F$ is defined by a closed rational $1$-form with coefficients in a flat line bundle and
without divisorial components in its zero set.
\item At a general point of every irreducible component of codimension two of $\sing(\F)$, the foliation
admits a holomorphic first integral of the form $x^py^q$ where $p,q$ are positive integers.
\end{enumerate}
\end{cor}
\begin{proof}
    If $\F$ is not $p$-closed, then the result follows from Theorem \ref{T:modulop}.
    If instead $\F$ is p-closed, then the same holds true for the restriction of $\F$
    to any projective surface $S\subset X$. Lemma \ref{L:singcharp} combined with Proposition \ref{P:codim2} implies the result.
\end{proof}

Although not strictly necessary for what follows, Theorem \ref{T:truquesujo} plays an essential
role in the classification of codimension one foliations with trivial canonical bundle on Fano $3$-folds with
Picard number one carried out in  \cite{croco3b}. In contrast with the corollary above,  no assumptions are
made on the nature of the singularities of the foliation.

\begin{thm}\label{T:truquesujo}
	Let $(X,H)$ be a polarized complex projective  manifold and $\F$ be a codimension one semi-stable foliation on $X$ with  numerically trivial
	canonical bundle. Suppose  $c_1(TX)^2 \cdot H^{n-2} >0$.
	If $\F$ is $p$-closed, then
	\begin{enumerate}
		\item  $\F$ is a  rationally connected foliation, i.e., the general leaf of $\F$ is a  rationally connected
		algebraic variety; or
		\item $T\F$ is not stable  and there exists  a rationally  connected  foliation $\mathcal H$ tangent to $\F$ and with $K_{\mathcal H} \cdot H^{n-1}=0$.
	\end{enumerate}
\end{thm}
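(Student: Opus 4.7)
My plan is to restrict the foliation to a general surface, reduce modulo $p$, and use Lemma~\ref{L:singcharp} to produce rational separatrices at singularities of the restricted foliation by curves. These separatrices should lift to a covering family of rational curves tangent to $\F$; Theorem~\ref{T:MBM} then gives rational connectedness of the leaves of $\F$ (case (1)) unless the family only covers a proper sub-foliation (case (2)). The hypothesis $c_1(TX)^2\cdot H^{n-2}>0$ enters through adjunction and Baum--Bott, providing the bulk of singularities needed to run this machinery.

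\textbf{Step 1 (Numerical setup and reduction to a surface).} Adjunction together with $K\F\equiv 0$ yields $c_1(N\F)\equiv -c_1(K_X)$, so $c_1(N\F)^2\cdot H^{n-2}>0$. For $m\gg0$ I would take $S\subset X$ to be a smooth surface cut out by $n-2$ general members of $|mH|$. The induced foliation $\mathcal G:=\F|_S$ is a foliation by curves on $S$ with isolated singularities, and a direct adjunction computation gives $K\mathcal G\equiv (n-2)mH|_S$, which is ample on $S$. By Mehta--Ramanathan the semi-stability of $T\F$ is inherited by $T\mathcal G$ for $m$ large enough.

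\textbf{Step 2 (Singularity analysis in characteristic~$p$).} For a Zariski-dense set of maximal primes $\p$, the reduction $\mathcal G_\p$ is again $p$-closed. By the Baum--Bott formula on the surface $S_\p$, the ampleness of $K\mathcal G$ forces many isolated singularities, the total contribution growing with $m$; at a generic such singularity the linear part is non-nilpotent (this is a genericity statement for tangencies of $\F$ with the pencil defining $S$). Lemma~\ref{L:singcharp} then says $\mathcal G_\p$ is, in formal coordinates at such a singularity, generated by a linear vector field $\sum\lambda_i x_i\partial_{x_i}$ with positive integer eigenvalues. Each such singularity therefore admits two formally smooth algebraic separatrices cut out by monomials $x_i=0$, hence birational to $\P^1$.

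\textbf{Step 3 (Lift to characteristic zero and conclude).} Using Shepherd--Barron's bound (Lemma~\ref{L:sb}) applied to the restriction of $T\mathcal G$ to the separatrices, I would argue that the separatrices have $H|_S$-degree bounded independently of $\p$. By a Hilbert-scheme/specialization argument (of the kind used in Section~\ref{S:modp} to lift invariant hypersurfaces), these bounded-degree rational curves tangent to $\mathcal G_\p$ specialize to rational curves on $S$ tangent to $\F|_S$ in characteristic zero. As $S$ varies in its family of general complete intersections, these curves sweep out a closed subvariety $Y\subseteq X$, carrying an algebraic covering family of rational curves tangent to $\F$. Applying Theorem~\ref{T:MBM} along such a curve $C$ in this family, $T\F|_C$ contains an ample subsheaf, so the leaves of $\F$ through $C$ are algebraic and their closures are rationally connected. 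If $Y=X$ we obtain conclusion~(1). If $Y\subsetneq X$, the saturation of the tangent directions to the covering family defines a sub-foliation $\H\subsetneq\F$ whose general leaf is rationally connected; semi-stability of $T\F$ gives $\mu(T\H)\le 0$, while ampleness of the relevant subsheaf on general curves forces $\mu(T\H)\ge 0$, so $K\H\cdot H^{n-1}=0$ and $\F$ is strictly semi-stable, which is conclusion~(2).

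\textbf{Main obstacle.} The delicate point is Step~3: obtaining a $\p$-uniform degree bound on the monomial separatrices produced by Lemma~\ref{L:singcharp} so that they specialize to honest rational curves in characteristic zero. Controlling this requires combining the Frobenius slope estimate of Lemma~\ref{L:sb} with the ampleness of $K\mathcal G$ on $S$, and knowing how the eigenvalue integers $\lambda_i$ appearing in the local model interact with the global positivity coming from $c_1(N\F)^2\cdot H^{n-2}>0$. The separation into cases (1) and (2) is then essentially forced by whether the resulting family of rational curves dominates $X$ or not.
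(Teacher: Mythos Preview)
Your initial setup is headed in the right direction: Baum--Bott plus the hypothesis $c_1(N\F)^2\cdot H^{n-2}>0$ does produce codimension-two singular components, and Lemma~\ref{L:singcharp} does constrain their local structure via the $p$-closedness. But from Step~3 onward the argument has a genuine gap, and the paper's proof takes a completely different route from that point.

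\textbf{The gap.} Lemma~\ref{L:singcharp} gives only a \emph{formal} linearization. The ``separatrices'' $\{x_i=0\}$ are formal curves; there is no reason they are algebraic, let alone rational of bounded degree. Your proposed use of Lemma~\ref{L:sb} does not help here: that lemma bounds the spread of slopes in the Frobenius pullback of a semistable bundle on a fixed curve, and says nothing about the degree of a (hypothetical) algebraic separatrix. So the mechanism you propose for producing a $\p$-uniform family of rational curves to lift to characteristic zero is not there. You correctly flag this as the main obstacle, but it is not a technical difficulty to be overcome --- it is the place where the argument actually fails.

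\textbf{What the paper does instead.} The paper uses the local classification from Lemma~\ref{L:singcharp} only to control the \emph{type} of the codimension-two singularities (either nilpotent or of the form $p\,x\,dy+q\,y\,dx$ with $p,q\in\mathbb Z_{>0}$). This local information is then exploited \emph{in characteristic zero}: one performs explicit blow-ups along the singular component $S$ (with a case analysis for nilpotent vs.\ linearizable singularities) and checks that the canonical bundle of the transformed foliation $\mathcal G=\pi^*\F$ on $Y$ satisfies
\[
K\mathcal G=\pi^*K\F - E - D
\]
with $E$ effective and $\pi(|E|)=S$. For the polarization $H_\varepsilon=\pi^*H+\varepsilon A$ with $\varepsilon>0$ small, this forces $K\mathcal G\cdot H_\varepsilon^{n-1}<0$. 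Then Corollary~\ref{C:MBM} (Miyaoka--Bogomolov--McQuillan), applied directly in characteristic zero, gives that the leaves are rationally connected --- no lifting of curves from characteristic $p$ is needed. The dichotomy (1)/(2) arises not from whether a family of rational curves dominates $X$, but from whether $T\mathcal G$ is $H_\varepsilon$-semistable: if not, one must show that the maximal destabilizing subfoliations $\mathcal H_\varepsilon$ stabilize as $\varepsilon\to 0$ (an argument about the movable cone, following Neumann), and the limit pushes forward to the subfoliation $\mathcal H$ in conclusion~(2).

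A minor remark: your invocation of Mehta--Ramanathan in Step~1 is misplaced, since $T\mathcal G$ is a line bundle and hence automatically semistable.
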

\begin{proof}
	As $c_1(T\F)=0$,  we have that $c_1(TX) = c_1(N \F)$. Thus
	$c_1(TX)^2 \cdot H^{n-2} = c_1(N\F)^2 \cdot H^{n-2} >0$ and Baum-Bott index theorem \cite{BB}
	implies the existence of a codimension two component $S$ of the singular set of  $\F$ which has
	positive Baum-Bott index.
		
	Take a general surface $\Sigma \subset X$ intersecting $S$ transversally. Since $\dim \Sigma=2$
	all the singularities of $\F_{|\Sigma}$ are isolated.
	As $p$-closedness is preserved by  restrictions to subvarieties, it follows from Lemma \ref{L:singcharp} that a  singularity
	of $\F_{|\Sigma}$ on  $S\cap \Sigma$ either   has nilpotent linear part, or  is  formally linearizable with rational
	quotient of  eigenvalues. Moreover, since the Baum-Bott index of $S$ is positive, in the latter case the quotient
	of eigenvalues must be positive. In both cases $S$ is a non-canonical center, i.e., there exists a composition of blow-ups on smooth centers  $\pi:Y \to X$  such that the canonical bundle of $\mathcal G= \pi^* \F$ is
	of the form
	\begin{equation*}
	   K_{\mathcal G} = \pi^* \KF  - E - D.
	\end{equation*}
	Here $E$ is an effective divisor supported on an irreducible
	hypersurface   such that $\pi(|E|) = S$ and $D$ is a divisor
	(not necessarily effective) such that $\pi(|D|)  \subset  \mathrm{sing}(S)$. In particular $\pi(|D|)$ has codimension
	at least three.
	
	Let $A$ be an ample line bundle on $Y$, and $H_{\varepsilon}$ equal to $\pi^*H + \varepsilon A$.
    Let  $\varepsilon_0 >0$ be  such that $K \G \cdot H_{\varepsilon}^{n-1} <0$ for any positive $\varepsilon \le \varepsilon_0$.
	If  $T\mathcal G$ is $H_{\varepsilon}$-semi-stable for some positive $\varepsilon \le \varepsilon_0$,
	then  Corollary \ref{C:MBM} implies that the leaves of $\mathcal G$ are rationally connected varieties, and we can conclude.
	If not, then for any positive $\varepsilon\le \varepsilon_0$,
	the maximal destabilizing foliation of $\mathcal G$, which we will denote by $\mathcal H_{\varepsilon}$,  satisfies
	\begin{equation*}
    	\mu_{\varepsilon} (T \mathcal H_{\varepsilon}) > \mu_{\varepsilon} ( T \mathcal G) \,
	\end{equation*}
	where the slope $\mu_{\varepsilon}$ is computed as a function of $A$ and $\varepsilon$.
	
	A priori, as $\varepsilon$ goes to zero the maximal destabilizing foliation $\mathcal H_{\varepsilon}$ could vary, but the proof of \cite[Lemma 3.3.3]{Neumann}
	shows that this cannot happen. More precisely, for $\varepsilon >0$ sufficiently small, the maximal
    destabilizing foliations $\mathcal H_{\varepsilon}$  will
	be all equal to a fixed foliation $\mathcal H$.
	Since $\mu_{H_{\varepsilon_i}}(T \mathcal H) > \mu_{H_{\varepsilon_i}}(T\mathcal G)>0$,  Corollary \ref{C:MBM}
    implies that the general leaf of $\mathcal H$ is rationally connected.
	
	To conclude, notice that on the one hand  $\mu_{\varepsilon_k}(T \mathcal H) > \mu_{\varepsilon_k}(T \mathcal G) > 0$ implies that $\mu_{\pi^*H}(T \mathcal H) = \mu_H(T \pi_*  \mathcal H) \ge 0$.
	On the other hand, the $H$-semistability of $T \F = T \pi_* \mathcal G$ implies  $\mu_H(T \pi_*  \mathcal H) \le 0$. It follows
	that $\mu_H(T \pi_*  \mathcal H) = 0$. Consequently  $T \F$  is   semi-stable but not stable, and $\pi_* \mathcal H$ is the sought foliation tangent to $\F$ with
	rationally connected general leaf.
 \end{proof}

\section{Closed rational differential forms }\label{S:structure}

In this section we will  prove the following result.

\begin{thm}\label{T:F}
    Let $\F$ be a codimension one foliation with numerically trivial canonical bundle on a  projective manifold $X$.  If the singularities of $\F$ are canonical, then
    $\F$ is defined by a closed rational $1$-form without divisorial components on its zero set after a finite \'etale covering, or $\F$ is an isotrivial fibration.  Otherwise, $\F$ is uniruled.
\end{thm}

Let us briefly recall what we already know.
\begin{enumerate}
\item If $\F$ has non canonical singularities, then $\F$ is uniruled according to Corollary \ref{C:uniruled}.
\item If $\KX$ is pseudo-effective, then $\F$ is smooth according to Theorem \ref{T:uniruled iff non psef}.
\item If $\F$ is not $p$-closed, then Theorem \ref{T:F} follows from  item (2) of Theorem \ref{T:modulop} in the situation where the flat line bundle involved is torsion. This is indeed the case and will be proved in \ref{SS:flator}.
\item\label{I:5} If $\F$ is $p$-closed, then at a general point of a codimension two irreducible component
of $\sing(\F)$, the foliation  $\F$ admits a local holomorphic first integral according to Corollary \ref{C:prime}.
\item If $X$ is uniruled, then $\F$ is transversely projective according to Theorem \ref{T:transproj}.
\end{enumerate}
To conclude the proof of Theorem \ref{T:F}, we will show that a foliation $\F$
with numerically trivial canonical bundle, with canonical singularities satisfying the conclusion of \ref{I:5},
and admitting a  transversely projective structure, necessarily satisfies the assumptions of Lemma \ref{L:logdivbis}.
Once this is done, Theorem \ref{T:F} follows since Lemma \ref{L:logdivbis} implies  $H^1(X,N^* \F)\neq 0 $, and
we can apply Corollary \ref{C:smooth} to conclude that $\F$ is smooth. Theorem \ref{T:F} will then  follow from the lemma below.

\begin{lemma}
Theorem \ref{T:F} holds true for smooth foliations.
\end{lemma}
\begin{proof}
The proof is a consequence of the classification of smooth codimension one
 foliations with trivial canonical class \cite{Touzet} recalled in Section \ref{S:oldresults}.
To prove Theorem \ref{T:F}, it suffices to show  that, after a finite \'etale covering,
such foliations are defined by a closed rational $1$-form without codimension one zeros. This is clear
for the foliations described in (1) and (2) of Section \ref{S:oldresults}.

Let us now  verify the result for the foliations transverse
to a fibration by rational curves (case (3) of Section \ref{S:oldresults}). It follows
from the classification of projective manifolds with trivial canonical class that they
have virtually abelian fundamental. Therefore, after a finite \'etale  covering,
we can assume that the lifting of closed paths in $Y$ along leaves of $\F$ will induce a representation with values
$(\mathbb C^*,\cdot)\subset \Aut(\mathbb P^1)$  or $(\mathbb C,+) \subset \Aut(\mathbb P^1)$. In both
situations, this holonomy representation preserves a rational $1$-form $\eta$ on $\mathbb P^1$ without zeros.
The invariance of $\eta$ by the holonomy representation allows us to saturate it by the foliation
in order to obtain a closed rational $1$-form without zeros defining $\F$.
\end{proof}

\subsection{Transversely projective structure, Schwarzian derivative, and invariant divisors}\label{S:Schwarz}
Suppose $\F$ is a codimension one foliation on a projective manifold $X$ with numerically trivial canonical bundle.
We will also assume that, at the general point of every irreducible component of the singular set of $\F$ having codimension two,  the foliation is defined by
\[
p x dy + q y dx
\]
with $p,q$ relatively prime positive integers.
We will  make use of the transversely projective structure given by Theorem \ref{T:transproj} to produce a divisor
satisfying the hypothesis of Lemma \ref{L:logdiv}.
The degeneracy locus of the transversely projective structure is of the form $\sing(\F) \cup \Sigma$, where $\Sigma$ is a finite union of $\F$-invariant hypersurfaces, see for instance \cite[Lemma 2.2]{RH},
or \cite[Proposition 2.15]{Croco2}.  Outside this  set, the foliation is defined by local submersions with values in $\P^1$, and transition functions in $\Aut(\P^1)$. We emphasize that the transverse structure gives distinguished first integrals for the foliation $\F$ outside the degeneracy locus of the projective structure. We will denote the sheaf of such distinguished first
integrals by $\mathcal I$.

Consider a regular point $p \in X - \sing(\F)$ where the foliation is locally given by a submersion $z$, $z(p)=0$. We can select an open neighborhood $U$ of $p$ and a section $f$ (possibly multi-valued) of $\mathcal I$ (which depends only on the $z$ variable) such that the Schwarzian derivative of $f$ with respect to $z$,
$\{f,z\}$,  is a well defined meromorphic function on the whole open set $U$. Hence, we can expand the Schwarzian derivative of $f$ with respect to $z$  as
\[
    \{ f, z\} =\sum_{i\geq i_0}a_ i z^i
\]
with $i_0 \in \mathbb Z$ and $a_{i_0} \neq 0$, unless $\{f,z\}$ vanishes identically.
The following facts can be easily verified.
\begin{enumerate}
	\item The first integral $f$ is a submersion if, and only if, $i_0\geq 0$.  In particular, if $i_0<0$, then the local
	      invariant hypersurface   $\{z=0\}$ actually belongs to an algebraic hypersurface in $\Sigma$.
	\item If $i_0 \leq -1$, then $i_0$ is independent of the choice of the local coordinate $z$. Consequently,
    	  $i_0$ is constant along the irreducible hypersurfaces, in $\Sigma$. If $H$ is one of such hypersurfaces, then
      	  we will denote by $i_0(H)$ the value of $i_0$ along it. Moreover, if $i_0\geq-2$, then  the coefficient of
          $\frac{1}{z^2}$  is independent of the coordinate  and we define  $a(H)=a_{-2}$.
	\item there exists $\varphi \in \Aut(\mathbb P^1)$ such that the function  $\varphi \circ f(z) - \log z$  is holomorphic if, and only if,  $i_0=-2$ and $a_{-2}=\frac{1}{2}$.
\end{enumerate}

We will say that $H$ is an irregular singularity of the projective structure if, and only if, $i_0(H)<-2$. Otherwise, if $i_0(H)\in\{-2,-1\}$,  we will say that $H$ is a regular singularity.

\subsubsection{Passing through corners}

Let  $\omega=pydx +qx dy$ be a germ of  $1$-form at the origin  of $ {\mathbb C}^n$ with $p, q$ relatively prime positive integers.
Suppose that the foliation $\F$ induced by $\omega$ is endowed with a projective structure.  Let $f$  be a multi-valued section of $\mathcal I$  defined on (the universal cover of) the complement of $\{xy=0\}$.
Let $r=\frac{q}{p}$. Set $i_x=i_0(\{x=0\})$ and  $i_y=i_0(\{y=0\})$.
On the transversals $\{y=1\}$ and $\{x=1\}$, we get respectively
\[
    \{f,x\} =\sum_{i\geq i_x} a_ix^i \quad \text{ and } \quad \{f,y\}=\sum_{i\geq i_y}b_iy^i \,.
\]

\begin{lemma}\label{L:facts}
    Notation as above. The following assertions hold true.
	\begin{enumerate}
		\item  If the singularity on $\{x=0\}$ is irregular, i.e.  $i_x<-2$, then  $i_y= r(i_x+2)-2$ and $b_{i_y}=a_{i_x}r^2$. Therefore,  $i_y<-2$ and the singularity  on $\{y=0\}$ is also irregular.
		\item If the singularity on $\{x=0\}$ is regular with $i_x=-2$, then $i_y\ge-2$ and
		$b_{-2}= r^2(a_{i_x}-\frac{1}{2})+\frac{1}{2}$.
		\item If $i_x \ge -1$ and $r\neq 1$, then $i_y=-2$.
	\end{enumerate}
\end{lemma}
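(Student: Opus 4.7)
The proof is a direct computation with the Schwartzian composition rule
\[
\{F_2,y\}=\{F_1,h(y)\}\cdot(h'(y))^2+\{h,y\}
\]
applied to the holonomy $h(y)=y^r$ of the corner singularity $pydx+qxdy=0$ (where $r=q/p$). The plan is first to justify why this composition rule is the correct relation between the two transversal expansions, then to extract the three claims by comparing orders of vanishing.

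Before anything else I would recall that the leaves of $\omega=pydx+qxdy$ admit $x^py^q$ as a first integral, so a leaf issuing from $(x_0,1)$ on the transversal $\{y=1\}$ meets the transversal $\{x=1\}$ at $(1,y_0)$ with $x_0=y_0^{q/p}=y_0^r$. Letting $F_1(x)=f(x,1)$ and $F_2(y)=f(1,y)$ denote the restrictions of the multi-valued first integral $f$ associated with the projective structure, the equality along leaves gives $F_2=F_1\circ h$, whence the composition rule above applies and yields
\[
\{F_2,y\}=r^2 y^{2(r-1)}\{F_1,x\}\big|_{x=y^r}+\{h,y\}.
\]
A direct calculation with $h(y)=y^r$ gives $\{h,y\}=\dfrac{1-r^2}{2y^2}$ (the factor $1/2$ is harmless as it gets absorbed in the constant $a(H)$), and after the substitution $x=y^r$ the expansion $\{F_1,x\}=\sum_{i\ge i_x}a_ix^i$ becomes $\sum_{i\ge i_x}a_iy^{r(i+2)-2}$ once combined with $r^2y^{2(r-1)}$.

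The three cases then follow by comparing the lowest exponent coming from the sum, namely $r(i_x+2)-2$, with the exponent $-2$ produced by $\{h,y\}$. In case (1), $i_x<-2$ forces $r(i_x+2)-2<-2$, so the leading contribution comes from the sum and we read off $i_y=r(i_x+2)-2$ with $b_{i_y}=r^2a_{i_x}\neq 0$; in particular $i_y<-2$, so the singularity on $\{y=0\}$ is also irregular. In case (2), $i_x=-2$ gives $r(i_x+2)-2=-2$, so the two contributions add up in the $y^{-2}$ coefficient, producing $b_{-2}=r^2 a_{-2}+\frac{1-r^2}{2}=r^2\bigl(a_{-2}-\tfrac12\bigr)+\tfrac12$, and the other terms of the sum are of order $\ge-2$, whence $i_y\ge-2$. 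In case (3), $i_x\ge-1$ gives $r(i_x+2)-2\ge r-2>-2$, so the sum contributes only higher-order terms and the $y^{-2}$ coefficient from $\{h,y\}$, equal to $\tfrac{1-r^2}{2}$, survives and is non-zero exactly when $r\neq 1$; hence $i_y=-2$ under that assumption.

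There is essentially no obstacle: the entire lemma is bookkeeping around a single application of the chain rule for the Schwartzian. The only point to watch is not to conflate the two different parameter scales (the $x$ on $\{y=1\}$ and the $y$ on $\{x=1\}$) and to use the holonomy $h(y)=y^r$, not its inverse, when invoking the composition rule; this is dictated by the identification $F_2=F_1\circ h$ explained above.
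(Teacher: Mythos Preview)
Your argument is correct and is exactly the approach the paper takes: the lemma is stated immediately after the Schwarzian composition rule and the computation of $\{h,y\}$ for $h(y)=y^r$, with the three items left as a direct consequence. You have simply written out the case analysis that the paper leaves implicit. One small point: your value $\{h,y\}=\dfrac{1-r^2}{2y^2}$ is the correct one under the paper's own definition of the Schwarzian (the displayed formula $\dfrac{1-r^2}{y^2}$ just above the lemma is a typo), and indeed it is your value that reproduces the exact coefficient $b_{-2}=r^2(a_{-2}-\tfrac12)+\tfrac12$ in item~(2); so your parenthetical ``the factor $1/2$ is harmless'' undersells the situation---that factor is needed for the formula to come out right, and your derivation already handles it correctly.
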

\begin{proof}
The   restrictions of $f$ to the transversals $\{y=1\}$ and $\{x=1\}$ are related by the so called Dulac's transform which is  a (multi-valued)
holonomy transformation between the two transversals. It is  explicitly given by $x=h(y)=y^r$.

The  composition rule for the Schwarzian derivative
\[
    \{f\circ h, z\} = \{f,h(z)\} {h'}^2(z)+\{h,z\}
\]
applied to  $x=h(y)=y^r$, together with the fact that
\[
    \{h,y\}= \frac{1-r^2}{ y^2} \,
\]
implies the lemma.
\end{proof}

It follows that the projective structure determines a logarithmic $1$-form $\eta$ with {\it canonical} residues on $\{xy=0\}$ satisfying $d\omega = \eta \wedge \omega$ as follows.
\begin{enumerate}
	\item In case of irregular singularities: $\eta= (-i_x-3)\frac{dx}{x} +(-i_y-3)\frac{dy}{y}$.
	\item In case of regular singularities: $\eta =({\vert 2a_{-2}-1\vert}^\frac{1}{2} -1)\frac{dx}{x}+({\vert 2b_{-2}-1\vert}^\frac{1}{2} -1)\frac{dy}{y}$.
\end{enumerate}

The square roots appearing in formula (2)  are the positive square roots of the corresponding absolute values.
In case the projective structure has regular singularities along one of the branches, say $\{ x=0\}$,
it may happen that the distinguished first integrals induced by the projective structure are holomorphic submersions  at a general point of  $\{ y=0\}$, e.g. the distinguished first integrals are of the form $x^py$ composed with elements of  $\Aut(\mathbb P^1)$. If this happens, then  $b_{-2}=0$ and the $1$-form
$\eta$ is holomorphic along $\{ y=0\}$. It also may happen that the distinguished first integrals are holomorphic at  general points of both branches. In this case, the distinguished first integrals are defined at the complement of $\{ x=y=0\}$, thanks to the  simple-connectedness of $(\mathbb C^n,0) - \{ x=y=0\}$, and therefore extend
through the singular set by Hartogs. When this happens, then $p=q=1$, $a_{-2}=b_{-2}=0$, and $\eta$ is equal to zero.

In any  case the   residues of $\eta$ are  real. Both residues are equal to $-1$, or  both
residues are  strictly greater than $-1$.

\subsection{Proof of Theorem \ref{T:F} }

Let $\F$ be a transversely projective foliation which is of the form $pxdx +qydx$ ($p, q$ relatively prime positive integers)
at the general point of every codimension two irreducible component $S$ of $\sing(\F)$.  We will now
construct a divisor $D$ with support  on $\Sigma$ (the singular set of the transversely projective structure)  satisfying the hypothesis of Lemma \ref{L:logdivbis}.

Write $\Sigma = \Sigma_1 \cup \ldots \cup \Sigma_{\ell}$ as the union of its connected components.
Fix a connected component $\Sigma_j$ and pick a point $p\in \sing(\F) \cup \Sigma_j$
in an irreducible component  $H_p \subset \Sigma_j$.

Assume  that $i_0(H_p)<-2$. Then every other irreducible component  $H$ of $\Sigma_j$
must satisfy $i(H)<-2$ according to Lemma \ref{L:facts}.
Then we set
\[
    D_j=\sum_{H \subset \Sigma_j} (-i(H)-3)H .
\]
Notice that $D_j$ satisfies the hypothesis of Lemma \ref{L:logdivbis} in a neighborhood of $\Sigma_j$.

Assume that $i_0(H_p)=-2$ and $a(H_p)=\frac{1}{2}$. Lemma \ref{L:facts} implies  the same holds true for every irreducible component $H \subset \Sigma_j$. Fact (3) of Section \ref{S:Schwarz} implies that
over a general point of $\Sigma_j$,  we get a logarithmic first integral (induced by the projective structure) which gives rise to a well defined local section $\beta$ of $d\mathcal I$.
Indeed, these local sections are logarithmic $1$-forms with poles on $\Sigma_j$,
which are unique up to a multiplicative constant. Using Mayer-Vietoris sequence,   we deduce
the existence of a global logarithmic $1$-form $\beta_j$ on a neighborhood $V$ of $\Sigma_j$.  Moreover, one can choose $\beta_j$ with all its residues strictly positive.
We use the (global) $1$-form $\beta_j$ to change the (local) {\it canonical} $1$-form $\eta$
deduced from Lemma \ref{L:facts} to a  new $1$-form $\eta'= \eta + \beta_j$ having all its residues strictly greater than $-1$ and still satisfying $d \omega = \eta' \wedge \omega$.
In this case we set
\[
    D_j =\sum_{H \subset \Sigma_j} ( \mathrm{res}_H(\eta') )H .
\]

Assume that  $i_0 (H_p)=-2$ and $a(H_p) \neq\frac{1}{2}$. In this case, using again  Lemma \ref{L:facts}, if we set
\[
    D_j =\sum_{H \subset \Sigma_j} ({\vert 2a(H)-1\vert}^\frac{1}{2} -1)H,
\]
then $D_j$ satisfies the hypothesis of Lemma \ref{L:logdivbis} in a neighborhood of $\Sigma_j$.

If we sum up the divisors $D_j$ for all the connected components of  $\Sigma$,
we obtain a divisor $D$ satisfying the hypothesis of Lemma \ref{L:logdivbis},  and consequently we get the non-vanishing of  $H^1(X, N^* \F)$.
As we are assuming that $\F$ is not uniruled, Corollary \ref{C:smooth} implies that $\F$ is smooth. The description given in \cite{Touzet} (see \S \ref{S:oldresults})  implies that $\F$ is defined
by a closed holomorphic $1$-form with coefficients in a torsion line bundle. Therefore, to conclude the proof of Theorem \ref{T:F}, it remains
to show that the closed $1$-forms given by Theorem \ref{T:modulop} have coefficients in a torsion line bundle.

\subsubsection{Flat implies torsion }\label{SS:flator}
Let $\F$ be a non-uniruled foliation with $c_1(\KF)=0$,  given by closed rational $1$-form $\omega$ without zeroes divisor, and  with coefficients in a flat line bundle $L$.
Assume $\sing(\F) \neq \emptyset$, and write ${(\omega)}_\infty=\sum \lambda_D D$ as a sum of irreducible divisors with positive integers coefficients.

If all the residues of $\omega$ are zero, then, according to Corollary \ref{C:closed without residues}, we get that $H^1(X,N^* \F) \neq 0$, and we conclude that $\F$ is smooth (Corollary \ref{C:smooth}).
From now on, we will assume the existence of an irreducible component $D$ in the support of $(\omega)_{\infty}$ with non-zero residue. Corollary \ref{C:Frat} implies that $D$
dominates  $R_X$ through the maximal rationally connected meromorphic fibration $R: X \dashrightarrow R_X$.

Recall that rationally connected manifolds are  simply-connected, and consequently $L$ is trivial in these
manifolds.  Thus,  we have only to deal with $X$ uniruled,  with rational quotient $R_X$ not reduced to a point.
Let us denote by $U\subset R_X$ the Zariski open subset such that the fibration $R$ over $U$ is a regular one. Let us pick a small  open ball $B$ on  $U$.
Over $R^{-1}(B)$ our flat line bundle $L$ is trivial since the fibers of $R$ are rationally-connected.
Therefore, we can represent $\omega$ in $R^{-1}(U)$ by a meromorphic $1$-form normalized in such a way that  its residue along a branch of $D$ is equal to $1$.
Since  there are only finitely many choices involved,  this enables us to conclude that $L$ is torsion. Therefore, after a finite \'etale covering, $\F$ is defined by
a closed rational $1$-form. This completes the  proof of Theorem \ref{T:F}.
\qed


\section{Structure}\label{S:structure2}

In this section, we will present a proof of our main result, namely Theorem \ref{TI:X}.

\subsection{Factoring out a manifold with trivial canonical bundle}\label{S:CY}
Let $\F$ be a codimension one foliation with numerically trivial canonical bundle 	and canonical singularities on a projective  manifold $X$.
Assume that the general leaf of $\F$ is not algebraic.
According to Theorem \ref{T:F}, we can (and will) assume that $\F$ is defined by a closed rational $1$-form $\omega$ without divisorial components in its zero set, after replacing $X$ by an \'etale covering.
Let $D$ be the  polar divisor of $\omega$. Consider the natural morphism
\[
	   \phi : T\F \to H^0(X, \Omega^1_X(\log D))^*  \otimes \mathcal O_X
\]
obtained by contracting germs of vector fields tangent to $\F$ with global logarithmic $1$-forms with poles on $D$. Here, we are exploiting the $\F$-invariance of  $D$ in order to get holomorphic $1$-forms along leaves from
logarithmic $1$-forms on $X$. Let $\H$ be the foliation with tangent sheaf equal to the kernel of $\phi$. Alternatively, $\H$ is defined by the kernel of $\omega$ intersected with the kernel
of all the elements of  $H^0(X, \Omega^1_X(\log D))$. From this description it is clear that $\det N \H \le  N \F = - \KX$.

\begin{remark}
If one does not impose restrictions on the poles of logarithmic $1$-forms, it may happen that they end up 
being non-closed, see the discussion at \cite[pp. 601-603]{BrunellaMendes}. For instance the $1$-form on $\mathbb P^2$ defined on affine coordinates by $(x^2 - y^5)^{-1}(2ydx -5xdy)$ has simple poles but is not closed. In our setup, where $D$ is normal crossing in codimension one, any $\omega \in H^0(X, \Omega^1_X(\log D))$ is automatically closed. To verify this fact, first observe that the residue of $\omega$ along the smooth locus of an irreducible component $H$ of $D$ is a well-defined holomorphic function. Since such irreducible component is a normal hypersurface, this holomorphic function extends through the singularities of $H$ and therefore must be constant by compactness of $H$. Thus the residues of $\omega$ are constant. It follows that $d\omega$ is a holomorphic $2$-form and we can apply \cite[Proposition 7.1]{CDQL} to conclude that $d\omega=0$.
\end{remark}

\begin{lemma}\label{L:factorCY}
    Perhaps after passing to an \'{e}tale covering, the manifold $X$ is the product of a
    projective manifold $Y$ with trivial canonical bundle, and another projective manifold $Z$;  the foliation
    $\H$ coincides with the fibers of the projection to $Z$; and $\F$ is the pull-back of a foliation on $Z$ via this projection.
\end{lemma}
\begin{proof}
    Let $\alb: X - D \to H^0(X, \Omega^1_X(\log D))^* / H_1(X-D, \mathbb Z)$ be the quasi-albanese map in the sense of \cite{Iitaka}. 
    Recall from \cite[Proposition 3]{Iitaka}  that $\alb$ extends to a rational map on the whole $X$ and, as such, defines a (perhaps singular)
    foliation on $X$ by algebraic leaves. By definition, the foliation $\mathcal H$ is the intersection of the foliation defined by
    the fibers of the quasi-albanese map with $\F$. Since any abelian representation of $\pi_1(X-D)$ becomes trivial when restricted to an arbitrary fiber $F$
    of $\alb$, we deduce that $\omega_{|F}$, the restriction of $\omega$ to  $F$, is an exact rational $1$-form. It follows that $\H$ is a  foliation by algebraic leaves.

    Corollary \ref{C:uniruled} implies that $\det N\H =  N\F$ (otherwise $\H$, and consequently $\F$ would be uniruled). Thus $\KH = \KF$   and   Theorem \ref{T:fibration}
    implies the product structure for $X$ compatible with $\H$. The result follows.
\end{proof}

\subsection{Finding the abelian Lie group} We keep the notation used in  Section \ref{S:CY} above.

\begin{lemma}\label{L:Free}
	If $\F$ is defined by a closed
    rational $1$-form $\omega$ without codimension one zeros, does not admit a rational first integral,
    and the foliation $\H$  is the foliation by points, then the
	tangent sheaf of $\F$ is free, i.e., $T \F = \mathcal O_X^{\oplus \dim(\F)}$. Furthermore,
    the projective manifold $X$ is an equivariant compactification of an abelian Lie group $G$, and
    the foliation $\F$ is induced by a codimension one subalgebra of the Lie algebra of $G$.
\end{lemma}
\begin{proof} 
    If $\H$  is the foliation by points, then  $\phi$ is injective. Therefore, the dual of $\phi$
    \[
	     \phi' : H^0(X, \Omega^1_X(\log(D)) \otimes \mathcal O_X\to T^* \F
	\]
	must be surjective in codimension one.
    From the triviality of the  determinants of the source and the target, we deduce that $\ker \phi'$ is isomorphic to a certain number of copies of $\mathcal O_X$. Since $\F$ has no rational first integral, and logarithmic $1$-forms are closed, the kernel of $\phi'$ is either trivial  or of rank  one. In the former case $D \neq D_{red}$, while in the later case the kernel is generated by $\omega$, a logarithmic $1$-form defining $\F$. In both cases, we have that $T  \F$ is isomorphic to $\mathcal O_X^{\oplus \dim \F}$ and that $\phi'$ is everywhere surjective.   Notice that we have also proved  that
    \begin{equation}\label{E:dimh0}
        \dim_\C H^0(X, \Omega^1_X(\log D)) = \left\{\begin{array}{ll}
        \dim X & \text{ when } \omega \text{ is logarithmic, and} \\
        \dim X -1 & \text{ otherwise.}
    \end{array} \right.
    \end{equation}
    Hence, the $\mathbb C$-vector space $V$ of closed rational $1$-forms generated by $\omega$ and $H^0(X, \Omega^1_X(\log D))$ always  has the same dimension as $X$. Furthermore, this complex vector space
    generates the $\mathbb C(X)$-vector space of rational $1$-forms on $X$.

    Since $T \F$ is trivial, we can choose holomorphic vector fields $v_1, \ldots, v_{n-1}$ which generate $T\F$. Notice that the contraction of any of these vector fields with any of the
    $1$-forms in $V$ is a constant, since they are all in the kernel of $\omega$ and $D$ is $\F$-invariant. Cartan's formula for the exterior derivative implies they generate an abelian Lie
    algebra of dimension $n-1$. Integration defines an abelian subgroup $H$ of the algebraic group $\Aut^0(X)$ of dimension $n-1$. Since the general leaf of $\F$ is not algebraic, we have
    that $H$ is not a Zariski closed subgroup of $\Aut(X)$.

    Let $\Aut^0(X)$ be the connected component of the identity of the  group of automorphisms of $X$.
    Since $X$ is projective by assumption, $\Aut^0(X)$ is an algebraic group. Let $G \subset \Aut^0(X)$ be the Zariski closure of $H$.
    Notice that $G$ is also abelian of dimension at least equal to $n=\dim X$. To verify that the dimension
    of $G$ is actually equal to $\dim X$, let's asssume (aiming at a contradiction) that we have at  least $n+1$ linearly independent vector fields in the Lie algebra
    of $G$ say $v_1, \ldots, v_n, v_{n+1}$.  Without loss of generality, we can assume that the first $n$ vector fields generate $TX$ over a Zariski open subset.
    Hence, we can write $v_{n+1} = \sum_{i=1}^n f_i v_i$ for suitable rational functions
    $f_i \in \mathbb C(X)$. Since $G$ is abelian, we have that $0=[v_j,v_{n+1}] = \sum_{i=1}^n v_j(f_i) v_i$ for every $j$. It follows that the functions $f_1, \ldots, f_n$
    are constant and that the vector field $v_{n+1}$ is a linear combination of the vector fields $v_1, \ldots, v_n$ contrary to our assumption. We conclude that $G$ has dimension $n$.

   Finally observe that the group $G$ itself can be identified with the locus where the wedge product $v_1 \wedge \cdots \wedge v_n$ does not vanish. We conclude that $X$ is an equivariant
   compactification of $G$.
\end{proof}

\begin{remark}
    We can go one step further and prove that $X$ is either a compactification of a quasi-abelian variety, or  an equivariant compactification of an extension of a
    quasi-abelian variety  by $(\mathbb C,+)$. Indeed, the group $G$ surjects onto an Abelian variety with fibers given by abelian algebraic linear groups. By a classical result
    of Rosenlicht, these abelian algebraic linear groups are isomorphic to $(\mathbb C^*, \cdot)^r \times (\mathbb C,+)^s$.  If $s=0$, then $G$ is a quasi-abelian variety;
    if instead $s=1$, then $G$ is an extension by $(\mathbb C,+)$ of a quasi-abelian variety. The case $s\ge 2$ is impossible, because the Lie algebra of $(\mathbb C,+)^s$
    would intersect $H$ giving rise to rational curves contained in the leaves of $\F$.
\end{remark}

\subsection{Proof of the main result (Theorem \ref{TI:X} of the Introduction)}
If $\F$ has non canonical singularities, then Theorem \ref{T:uniruled iff non psef} implies that $\F$ is uniruled. If instead $\F$  has canonical singularities,
then according to Theorem \ref{T:F}, after passing to a finite \'etale covering,  $\F$ is a smooth isotrivial fibration, or  $\F$ is defined by a closed  rational $1$-form without divisorial components in its zero set.
When $\F$ is a  fibration,  the result follows from Theorem \ref{T:alg leaves}. From now on assume that $\F$ is defined by a closed  rational $1$-form without divisorial components in its zero set, and is not a fibration.
We can apply Lemma \ref{L:factorCY} to show that $X$ is, up to an \'etale covering, a product of a projective manifold $Y$ with numerically trivial canonical bundle with a projective manifold $Z$. Furthermore, $\F$ is the pull-back under the natural
projection $Y \times Z \to Z$ of a foliation $\G$ on $Z$ satisfying the assumptions of Lemma \ref{L:Free}. Theorem \ref{TI:X} follows.
\qed


\bibliographystyle{amsplain}

\end{document}